\pgfplotsset{compat=1.18}
\numberwithin{equation}{section}
\newtheorem{theorem}{Theorem}[section]
\newtheorem{corollary}[theorem]{Corollary}
\newtheorem{lemma}[theorem]{Lemma}
\newtheorem{proposition}[theorem]{Proposition}
\newtheorem{assumption}[theorem]{Assumption}
\newtheorem{remark}[theorem]{Remark}
\DeclareMathAlphabet{\pazocal}{OMS}{zplm}{m}{n}
\newcommand{\ds}{\displaystyle}
\newcommand{\RomanNum}[1]{\uppercase\expandafter{\romannumeral #1\relax}}
\newcommand{\e}{\varepsilon}
\newcommand{\eps}{\epsilon}
\newcommand{\R}{\mathbb{R}}
\newcommand{\N}{\mathbb{N}}
\newcommand{\Ba}{\mathcal{B}}
\newcommand{\Cb}{\pazocal{C}}
\newcommand{\Eb}{\pazocal{E}}
\newcommand{\Fa}{\mathcal{F}}
\newcommand{\Wb}{\pazocal{W}}
\newcommand{\Zb}{\pazocal{Z}}
\newcommand{\Exp}{\mathbb{E}}
\newcommand{\Prob}{\mathbb{P}}
\newcommand{\CondExp}[2]{\Exp[#1\,|\,#2]}
\newcommand{\Indicator}[1]{\mathbbm{1}_{#1}}
\newcommand{\InnerProd}[2]{\langle #1,\,#2 \rangle}
\newcommand{\terminology}[1]{\textbf{\textit{#1}}}
\def\XXint#1#2#3{{\setbox0=\hbox{$#1{#2#3}{\int}$}
        \vcenter{\hbox{$#2#3$}}\kern-.5\wd0}}
\begin{document}

\title{Convergence Rate for the Last Iterate of Stochastic Gradient Descent Schemes}


\begin{abstract}
We study the convergence rate for the last iterate of stochastic gradient descent (SGD) and stochastic heavy ball (SHB) in the parametric setting when the objective function $F$ is globally convex or non-convex whose gradient is $\gamma$-H\"{o}lder. Using only discrete Gronwall's inequality without Robbins-Siegmund theorem, we recover results for both SGD and SHB: $\min_{s\leq t} \|\nabla F(w_s)\|^2 = o(t^{p-1})$ for non-convex objectives and $F(w_{\tau \wedge t}) - F_* = o(t^{2\gamma/(1+\gamma) \cdot \max(p-1,-2p+1)-\eps})$ for $\beta \in (0, 1)$, $\tau := \inf \{ t > 0 : F(w_t) = F_*\}$, and $\min_{s \leq t} F(w_s) - F_* = o(t^{p-1})$ for convex objectives $F$ whose minimum is $F_*$. In addition, we proved that SHB with constant momentum parameter $\beta \in (0, 1)$ attains a convergence rate of $F(w_t) - F_* = O(t^{\max(p-1,-2p+1)} \log^2 \frac{t}{\delta})$ with probability at least $1-\delta$ when $F$ is convex and $\gamma = 1$ and step size $\alpha_t = \Theta(t^{-p})$ with $p \in (\frac{1}{2}, 1)$.
\end{abstract}


\keywords{stochastic optimization, stochastic gradient descent, stochastic heavy ball, almost sure convergence, convergence rate with high probability}

\author{Marcel Hudiani$^{\dagger}$}

\address{
    $^{\dagger}$ Department of Mathematics\\
    University of Arizona\\
    621 N. Santa Rita Ave.\\
    Tucson, AZ 85721, USA
    }

\email{{\tt marcelh@arizona.edu}}

\maketitle


%

\section{Introduction}
\label{s:1}

We study the almost sure convergence rate for the last iterate of Stochastic Gradient Descent (SGD) and Stochastic Heavy Ball (SHB) schemes associated to solving an unconstrained optimization problem involving a cost function defined on a subset of $\R^d$. We consider the application of these algorithms in the following context as in \cite{Bottou2012}:
\begin{equation}
    w_{t+1} - w_t = -\alpha_t \,\nabla \ell(Z_t, w_t) + \beta (w_t - w_{t-1}) ~,~ \beta \in [0, 1)
    \label{eq:shb}
\end{equation}
where $w_t$ is in $\R^{d}$, $Z_t \in \R^{N}$ is an i.i.d process with finite variance sampled from a continuous distribution with density $\rho$ independent of $\Fa_t = \sigma(w_s : s \leq t)$, $\alpha_t$ is the (non-random) step size, and $\ell$ is an estimator of a deterministic cost function $F : \Wb \rightarrow [0, \infty)$ defined by $F(w) = \Exp_{\rho}[\ell(Z, w)]$. When $\beta = 0$, (\ref{eq:shb}) is the SGD and can be rewritten as
\begin{equation}
    w_{t+1} - w_t = -\alpha_t \nabla F(w_t) + \alpha_t\,\delta m_t ~~~,~~~ \delta m_t = \nabla F(w_t) - \nabla\ell(Z_t,w_t).
    \label{eq:sgd2}
\end{equation}

SHB \eqref{eq:shb} has been studied quite extensively, especially when $F$ is coercive, convex, and smooth with bounded second derivative and with finite variance in the noise. Specifically, under such conditions, results on almost sure convergence, scaling limit in distribution, and $L^2$ convergence rates have been derived \cite[theorem 1-2,4,6]{gadat2016shb}. Similar results can also be found for the special case \eqref{eq:sgd2}. Such a scheme is a class of \terminology{Robbins-Monro} procedures \cite{robbins1951}. See also \cite{kiefer1952}. Under some conditions on the noise and $\nabla F$, Robbins-Monro type algorithms converge almost surely \cite[section \RomanNum{1}.1, thm \RomanNum{1}.9]{ljung1992}. Such a statement relies on the \terminology{Robbins-Siegmund theorem} \cite[theorem 1]{robbins1971}, which is a convergence theorem for non-negative \terminology{almost super-martingale}. We remark that other methods to treat the almost sure convergence of Robbins-Monro algorithm exists, e.g. the \terminology{Ordinary Differential Equation (ODE) method}, which relies on the stability theory of ODE. See \cite[section \RomanNum{1}, page 11]{ljung1992}.

Going from classical theory, there are two important directions to pursue. The first belongs to the case where $F$ is non-convex with Lipschitz gradient (weakening the convexity and smoothness assumptions on $F$) under weakened assumption on the noise. The second belongs to almost sure convergence rates. Despite many classical results, strong convergence behavior of stochastic gradient schemes under general convexity assumptions on $F$ (e.g. non-convex with saddle points) under weaker noise assumption has been studied fairly recently. Liu and Yuan provided sufficient conditions on $F$ without assuming global uniform variance bound on the estimator $\|\nabla \ell\|$ for SHB and SNAG (Stochastic Nesterov Accelerated Gradient) to avoid saddle points almost surely \cite{liuyuan2024}. In addition, the authors of \cite{liuyuan2024} expanded the application of Robbins-Siegmund theorem for proving almost sure convergence rates for SGD, SHB, and SNAG for non-convex, convex, and strongly convex objectives whose gradient is Lipschitz \cite[table 1]{liuyuan2024}. We remark that other recent convergence rate results include \cite{agarwal2012,lei2018,nguyen2019new,orabona2020,sebbouh21a,weissmann2025}.

When $F$ is convex, more results are available. The optimal convergence rate $O(t^{-1/2})$ for stochastic first order oracle was proved for bounded, convex, and Lipschitz cost functions in \cite[thm 1-2]{agarwal2012}. A constructive proof for such an optimal rate using SHB is done in \cite{sebbouh21a}. More precisely, Sebbouh et al. showed that there exists $\{(\alpha_t, \beta_t) : t \geq 0 \}$ such that \cref{eq:shb} attains the optimal convergence $O(t^{-\frac{1}{2} + \eps})$ for convex cost functions with Lipschitz gradient \cite[theorem 13]{sebbouh21a}. In this case, the momentum $\beta_t$ is a function of the step size $\alpha_t$. When the step-size is chosen to be $\alpha_t = O(t^{-1/2})$, the momentum takes values $\beta_t = O(0.5 t/(t+1))$. Intuitively, when the minimizer is still far away in the early iterations, the algorithm is approximately an SGD, but gradually transitions to SHB with large momentum as $t \rightarrow \infty$. This shows that the choice of $\beta_t$ is crucial since the almost sure asymptotic performance of SHB is the same as SGD when $\beta_t = \beta$ is held constant \cite{liuyuan2024}. In the non-parametric setting, Lei, Shi, and Guo provided almost sure convergence rate and fluctuation for SGD ($\beta_t = 0$) when $\nabla F$ is only $\gamma$-H\"{o}lder \cite{lei2018}. More precisely, SGD achieves an almost sure convergence rate $o(t^{\max(p-1,1-p(\gamma+1))+\eps})$ given step size $O(t^{-p})$ with $p \in (\frac{1}{1+\gamma}, 1)$ for a convex objective whose gradient is $\gamma$-H\"{o}lder \cite[theorem 6]{lei2018}. Moreover, SGD converges at rate $F(w_{t+1}) - F_* = O (t^{\max(p-1,1-p(\gamma+1))} \, \log^2(t/\delta))$ with probability at least $1-\delta$ if $p \in (\frac{1}{1+\gamma}, 1)$ \cite[corollary 11]{lei2018}.

\subsection{Contribution}

We will restrict the scope of our study to the case where $\beta_t := \beta$ is constant in \eqref{eq:shb} and $\ell(z,\cdot)$ is convex and $\nabla \ell(z,\cdot) \in \Cb^{\gamma}(\R^d)$ with H\"{o}lder constant $L > 0$ and $\gamma \in (0, 1]$. Further, we assume that there exists a global minimum $F_* := \min_{w \in \R^d} F(w) \geq 0$ and a global minimizer $w_* \in \R^d$. In the landscape of convergence rate theory for stochastic gradient schemes, the survey of prior works provided above indicates that the case of a convex objective whose gradient is $\gamma$-H\"{o}lder for SHB is unexplored. This is where our contribution lies.

There are three main contribution from our work. First, we provide an alternative method to prove convergence rate using Gronwall's inequality and Doob's martingale convergence theorem instead of using Robbins-Siegmund theorem (cf. \cref{p:weak_RS} and \cref{l:weak_RS_rate}). Second, we provide an almost sure convergence rate result for SHB for convex objective whose gradient is $\gamma$-H\"{o}lder. As stated previously, this case is unexplored. Roughly stated, the result says that given a convex objective with $\gamma$-H\"{o}lder gradient, SHB with momentum $\beta \in (0, 1)$ and learning rate (step size) $t^{-p}$ with $p \in (\frac{1}{1+\gamma}, 1)$ converges to a global minimum almost surely with rate $\frac{2\gamma}{1+\gamma} \max(p-1, 1-p(\gamma + 1))$ (cf. \cref{t:risk_o}). Finally, we also provide convergence rate in probability for the case $\gamma = 1$ (cf. \cref{t:risk_hp}). This is also unexplored as the case $\gamma = 1$ was only done for SGD. Our result is consistent with \cite{liuyuan2024,lei2018} for the case $\gamma = 1$. \Cref{tab:thesis:sgd:intro:rates} compares our convergence rate results with prior work.

\begin{table}[h!]
    \centering
    \begin{tabular}{|p{2.8cm}|p{4.25cm}|p{5.5cm}|}
        \hline
        \multirow{2}{*}{}
        Algorithm & Cost Function and Learning Rate & Convergence Rate\\
        &  &
        \\
        \hline
        \hline
        \multirow{2}{*}{}
        SGD-np & Convex, $\gamma$-H\"{o}lder gradient & $O(t^{\max(p-1,1-p(\gamma + 1))})$\\
        \cite{lei2018} & $O(t^{-p}), p \in (\frac{1}{1+\gamma}, 1)$ & a.s. and h.p..
        \\
        \hline
        \multirow{2}{*}{}
        SHB & Convex, Lipschitz gradient &\\
        \cite{sebbouh21a} & $O(t^{-\frac{1}{2}})$, $\beta_t \in O(\frac{t}{1+t})$ & $O(t^{-\frac{1}{2}+\eps})$ a.s..
        \\
        \hline
        \multirow{2}{*}{}
        SGD, SHB, SNAG & Convex, Lipschitz gradient &\\
        \cite{liuyuan2024} & $O(t^{-\frac{2}{3}})$ & $O(t^{-\frac{1}{3}})$ a.s..
        \\
        \hline
        \multirow{2}{*}{}
        SHB, $\beta \in (0, 1)$ & Convex, $\gamma$-H\"{o}lder gradient & $O(t^{\frac{2\gamma}{1+\gamma} \max(p-1,1-p(\gamma + 1))})$ a.s.
        \\
        & $O(t^{-p}), p \in (\frac{1}{1+\gamma}, 1)$ & $O(t^{\max(p-1,-2p+1)})$ h.p., $\gamma = 1$.
        \\
        \hline
    \end{tabular}
    \caption{List of Prior works on Convergence Rates. The notation $f(t) = O(g(t))$ is defined as the existence of a positive constant $c > 0$ such that $f(t) \leq c \, g(t)$ for sufficiently large $t > 0$. Such definition along with other measure of convergence rates such as $f(t) = \Theta(g(t))$ and $f(t) = o(g(t))$ can be found in \cref{tab:1}.}
    \label{tab:thesis:sgd:intro:rates}
\end{table}

\subsection{Acknowledgment}

I would like to thank Prof. Sunder Sethuraman, for discussions and support throughout the writing process of this manuscript.

\pagebreak
\section{Mathematical Model and Main Results}

\subsection{Notation and Assumptions}

We list notations, assumptions, results, and main mathematical arguments that we use in our work for convenience of the reader.

\begin{table}[h!]
    \centering
    \begin{tabular}{|c||p{12cm}|}
        \hline
        Symbol & Description
        \\
        \hline\hline
        $\N_0 = \N \cup \{0\}$ & The set of non-negative integers.
        \\
        $\R_+$ & The open interval $(0, \infty)$, i.e. the set of strictly positive real numbers.
        \\
        $\lesssim$ & $a \lesssim b$ means that there exists $c > 0$ such that $a \leq c \cdot b$.
        \\
        $\| \cdot \|$ & The standard Euclidean norm in $\R^d$.
        \\
        $f(t) = o(g(t))$ & $f(t)/g(t) \rightarrow 0$ as $t \rightarrow \infty$.
        \\
        $f(t) = O(g(t))$ & $\exists\, c > 0$ and $T > 0$ such that $f(t) \leq c \,g(t)$ for all $t > T$.
        \\
        $f(t) = \Theta(g(t))$ & $f(t) = O(g(t))$ and $g(t) = O(f(t))$.
        \\
        $(\Omega, \Fa, \Prob)$ & The probability space for the paired process $(Z_t, w_t)$ in (\ref{eq:shb}).
        \\
        $\Fa_{t}$ & The filtration $\{\Fa_{t} : t \in \N\}$ where $\Fa_t = \sigma\{w_s : s \leq t\}$.
        \\
        $\Fa_{\infty}$ & The $\sigma$-algebra $\bigcup_{t \in \N} \Fa_t$.
        \\
        $m\Fa_{t}$ & The set of measurable functions w.r.t $\Fa_{t}$ for $t \in \N \cup \{\infty\}$.
        \\
        $E$ a.s. & The event $E$ occurs almost surely, i.e. $\Prob(E) = 1$.
        \\
        $E$ h.p. & The event $E$ occurs with high probability, i.e. $\exists \delta > 0,~ \Prob(E) \geq 1 - \delta$.
        \\
        $\ell(z, w)$ & The estimator $\ell : \Zb \times \Wb \rightarrow [0, \infty)$ of the objective function.
        \\
        $Z_t$ & An i.i.d sequence of random vectors in $\Zb \subset \R^N$ that makes up the stochastic gradient estimator $\nabla \ell(Z_t, w_t)$.
        \\
        $\rho$ & The density function for $Z_t$.
        \\
        $F$ & The objective function $F : \Wb \rightarrow [0, \infty)$ defined by $F(w) = \Exp_{\rho}[\ell(Z_t, w)]$.
        \\
        $\delta m_t$ & The injected noise in stochastic algorithms, defined in (\ref{eq:sgd2}).
        \\
        \hline
    \end{tabular}
    \caption{Summary of Notations.}
    \label{tab:1}
\end{table}

\begin{assumption}
    Let $\Zb \subset \R^n$, $\Wb \subset \R^d$ and $\rho$ be a probability density on $\Zb$. The function $\ell : \Zb \times \Wb \rightarrow \R_+$ satisfies:
    \begin{enumerate}
        \item $\ell(z, \cdot)$ is \textbf{\textit{$(\gamma, L)$-smooth}}, i.e. $\ell(z, \cdot)$ is differentiable and $\nabla \ell(z, \cdot)$ is $(\gamma, L)$-H\"{o}lder:
        \begin{equation*}
            \exists \gamma \in (0, 1]~ \text{such that}~ \|\nabla \ell(z, u) - \nabla \ell(z, v)\| \leq L \,\|u - v\|^{\gamma} ~~~\forall\, u, v \in \Wb,\, z \in \Zb.
        \end{equation*}
        \item $\nabla \ell(\cdot, w) \in L^{1}(\Zb, \Ba, \rho\,dz)$ for all $w \in \Wb$ where $\Ba$ is the Borel $\sigma$-algebra in $\R^N$.
        \item $F(w) = \Exp_{\rho}[\ell(Z, w)]$ is bounded below by $F_* = \inf_{w \in \Wb} F(w) > -\infty$ and a global minimizer $w_* \in \Wb$ exists, i.e. $F(w_*) = F_*$.
    \end{enumerate}
    \label{a:loss}
\end{assumption}

The assumption that $\nabla \ell$ is only $\gamma$-H\"{o}lder is used in \cite[assumption 1]{lei2018}, whose application is also mentioned in \cite{lei2018}. This is a weakening of the $\nabla \ell$ being Lipschitz used in \cite[assumption 1]{liuyuan2024}. However, thanks to a slight modification in the proof of Garrigos and Gower \cite[lemma 2.25]{garrigos2024} (see proposition \ref{p:gamma_smooth}), any function $f$ that is $(\gamma, L)$-smooth still enjoys the property
\begin{equation}
    f(y) \leq f(x) + \InnerProd{\nabla f(x)}{y - x} + \frac{L}{1 + \gamma} \|y - x\|^{1+\gamma}
    ~~~\,\forall x, y \in \R^d.
    \label{eq:gamma_smooth}
\end{equation}

Now, observe that $\Exp_{\rho}[\nabla \ell(Z, w)]$ exists due to the second property in assumption \ref{a:loss}. Such a property can be satisfied for example when $\Zb$ is bounded or if $Z$ is Gaussian. Moreover, assuming such property is natural; in the gradient descent (GD) scheme
\begin{equation}
    w_{t+1} - w_t = -\alpha_t \, \frac{1}{n} \sum_{k=1}^{n} \nabla\ell(Z_k, w_t)
    \label{eq:sgd_det}
\end{equation}
such an assumption implies convergence almost surely of the empirical average on the right hand side to $\Exp_{\rho}[\nabla\ell(Z, w_t)] = \nabla \Exp_{\rho}[\ell(Z, w_t)] = \nabla F(w_t)$ as the number of data points $n \uparrow \infty$. This suggests that (\ref{eq:sgd_det}) approximates the deterministic GD \cite[algorithm 3.2]{garrigos2024}. In addition, there are important consequences. The first consequence is due to proposition \ref{p:F_inherits_L}: $F$ inherits the properties in assumption \ref{a:loss} from the function $\ell(z, \cdot)$. Secondly, if the joint distribution of $(Z_t, w_t)$ is the product measure $\rho \,dz \otimes \mu_t$, then
\begin{equation}
    \begin{aligned}
        \int \Indicator{\{ w_t \in A \}} \,\CondExp{\nabla\ell(Z_t, w_t)}{\Fa_t} \,d\Prob &= \int_A \int_{\Zb} \nabla\ell(z,w) \,\rho(z) \,dz \,\mu_t(dw)\\
        &= \int_A \Exp_{\rho}[\nabla\ell(Z, w)] \,\mu_t(dw) = \int_A \nabla\Exp_{\rho}[\ell(Z, w)] \,\mu_t(dw)\\
        &= \int \Indicator{\{ w_t \in A \}} \,\nabla F(w_t) \,d\Prob.
    \end{aligned}
    \label{eq:grad_F_estimator}
\end{equation}
Equation (\ref{eq:grad_F_estimator}) implies $\CondExp{\nabla\ell(Z_t, w_t)}{\Fa_t} = \nabla F(w_t)$ and that the estimator $\nabla\ell(Z, w_t)$ is unbiased (w.r.t $\rho$). That $\nabla F(w_t)$ can be accessed through the estimator $\nabla\ell(Z, w_t)$ is also assumed in \cite[the sentence right above assumption 4]{liuyuan2024}.

\begin{assumption}
    Using the same notation as in assumption \ref{a:loss}, there exists $A,B,C \in \R_+$ such that $\CondExp{\|\nabla \ell(Z_t, w_t)\|^{1+\gamma}}{\Fa_t} \leq A(F(w_t) - F_*) + B\,\|\nabla F(w_t)\|^{1+\gamma} + C$ for all $t > 0$ a.s.-$\Prob$.
    \label{a:ABC}
\end{assumption}

The above assumption is called the \textbf{\textit{ABC condition}}, originally proposed by Khaled and Richt\'{a}rik in \cite{khaled2023}, and is used by Liu and Yuan in \cite[assumption 4]{liuyuan2024} when $F$ is $(1, L)$-smooth, i.e. $\gamma = 1$. It is said to be ``the weakest assumption" for analysis of SGD in the non-convex setting \cite[remark 1]{liuyuan2024}. By lemma \ref{l:lsg_14}, the first bounding term is a consequence of $(\gamma, L)$-smoothness and convexity:
\begin{equation*}
    \begin{aligned}
        \int \Indicator{\{w_t \in A\}} \,&\CondExp{\|\nabla\ell(Z_t, w_t)\|^{1+\gamma}}{\Fa_t} \,d\Prob = \int_A \Exp_{\rho}\|\nabla\ell(z,w)\|^{1+\gamma} \,\mu_t(dw)
        \\
        &\leq \int_A \bigg\{ c_1(\beta,\gamma) (F(w) - F_*) + c_2(\beta,\gamma) + c_3(\beta,\gamma)\, \Exp_{\rho}[\|\nabla\ell(Z, w_*)\|^{1+\gamma}] \bigg\} \,\mu_t(dw).
    \end{aligned}
\end{equation*}
Note that the last two terms in the integrand are constants and that the existence of a global minimizer $w_* \in \Wb$ is required (satisfied by assumption \ref{a:loss}). When $\gamma = 1$ and the gradient estimator is unbiased, i.e. $\Exp_{\rho}[\nabla \ell(Z,w)] = F(w)$, the variance is
\begin{equation*}
    \begin{aligned}
        \Exp_{\rho}\|\nabla \ell(Z, w) - \nabla F(w)\|^2  &\leq \Exp_{\rho}\|\nabla \ell(Z, w)\|^2 - 2 \, \Exp_{\rho}[\InnerProd{\nabla \ell(Z, w)}{\nabla F(w)}] + \|\nabla F(w)\|^2
        \\
        &= \Exp_{\rho}\|\nabla \ell(Z, w)\|^2 - \|\nabla F(w)\|^2.
    \end{aligned}
\end{equation*}
Therefore, if the variance is bounded by $\sigma^{2} < \infty$, then $\Exp_{\rho}\|\nabla \ell(Z, w)\|^2 \leq \|\nabla F(w)\|^2 + \sigma^{2}$. This justifies the second term in the ABC assumption. For $\gamma < 1$, assuming that the gradient estimator has variance bounded by $\sigma^2$, we end up with
\begin{equation*}
    \Exp_{\rho}\|\nabla \ell(Z_t, w_t)\|^{1+\gamma} \leq 2^{1+\gamma} (\|\nabla F(w_t)\|^{1+\gamma} + \Exp_{\rho}\|\delta m_t\|^{1+\gamma}) \leq B \,\|\nabla F(w_t)\|^{1+\gamma} + C\, \sigma^{1+\gamma}.
\end{equation*}
In fact, with a simple estimate $a^{1+\gamma} \leq (a + 1)^{1+\gamma} \leq (a + 1)^2 \leq 2 a^2 + 2$, setting $a = \|\nabla F(w_t)\|$ gives
\begin{equation}
    \CondExp{ \|\nabla \ell(Z_t, w_t)\|^{1+\gamma} }{\Fa_{t}} \leq 2 B \,\|\nabla F(w_t)\|^{2} + (C\, \sigma^{1+\gamma} + 2) ~~~\text{a.s.-}\Prob.
    \label{eq:ABC:grad_F}
\end{equation}
which has the form of the ABC condition in \cite[assumption 4]{liuyuan2024}. Further discussions on the ABC condition can be found in \cite{liuyuan2024} and \cite{khaled2023}.

\begin{assumption}
    $\sup_{z \in \Zb} \ell(z, w_*) < \infty$.
    \label{a:ell_cond}
\end{assumption}

Assumption \ref{a:ell_cond} is also used in \cite{lei2018}. Such an assumption can be satisfied when $\Zb$ is bounded. See discussions in \cite{lei2018}. Note that by lemma \ref{l:lsg_sb_ell}.\ref{i:l:lsg_sb_ell:dmt}, if $\ell(z, \cdot)$ is convex and assumption \ref{a:loss} holds, then the `noise' $\|\delta m_t\|^2$ defined in (\ref{eq:sgd2}) is bounded by a scalar multiple of $\|w_t - w_*\|^{2\gamma} + 1$. This assumption is used to produce probability estimates using concentration inequalities for the convergence rate of SHB iterates.

\subsection{Almost Sure Convergence Rate}
\label{s:main_results:1}

Using assumptions \ref{a:loss} and \ref{a:ABC}, one may estimate the iterates $F(w_t) - F_*$. Indeed, under $\gamma$-H\"{o}lder smoothness assumption, we have $F(y) - F(x) \leq \InnerProd{\nabla F(x)}{y-x} + (L/(1+\gamma)) \, \|y - x\|^{1+\gamma}$ for all $x, y \in \R^d$. If one substitutes $y = w_{t+1}$ and $x = w_t$, then for the SGD case, we get an estimate for the iterates of $F(w_t)$ almost surely:
\begin{equation}
    F(w_{t+1}) - F(w_t) \leq -\alpha_t \, \|\nabla F(w_t)\|^2 + \alpha_t \, \InnerProd{\nabla F(w_t)}{\delta m_t} + \frac{L}{1 + \gamma} \, \alpha_t^{1+\gamma} \| \nabla \ell(Z_t, w_t) \|^{1+\gamma}.
    \label{eq:sgd:intro:dF}
\end{equation}

A natural approach to deal with \eqref{eq:sgd:intro:dF} is to apply $\CondExp{\cdot}{\Fa_t}$. Upon such application, the martingale difference term vanishes and the negative term $-\alpha_t \|\nabla F(w_t)\|^2$ can be dropped. So the iterates of $\CondExp{F(w_{t+1}) - F_*}{\Fa_t}$ looks like
\begin{equation}
    \CondExp{F(w_{t+1}) - F_*}{\Fa_t} \leq [F(w_t) - F_*] + \frac{L}{1 + \gamma} \, \alpha_t^{1+\gamma} \, \CondExp{\|\nabla \ell(Z_t, w_t)\|^{1+\gamma}}{\Fa_t}.
    \label{eq:thesis:sgd:intro:idea:RS}
\end{equation}
Under \cref{a:ABC}, $Y_t := F(w_t) - F_*$ is a non-negative almost super-martingale for which an almost sure convergence theorem (a.k.a. \terminology{Robbins-Siegmund}) exists \cite{robbins1971}. In \cite{liuyuan2024}, an upper bound for $t^r Y_t$ is constructed and then Robbins-Siegmund theorem is used. The Robbins-Siegmund theorem is proved by constructing a super-martingale and then using Doob's martingale convergence theorem to conclude the convergence \cite{robbins1971}. Our approach is slightly different. While we use the same estimate as in \cite[lemma 19]{liuyuan2024} for the upper bound of $t^r Y_t$, we proved the convergence of $t^r Y_t$ using Gronwall's inequality and Doob's Martingale convergence theorem. Gronwall gives uniform upper bounds to $\sum_{t=1}^{\infty} \alpha_t \, \|\nabla F(w_t)\|^2$, $\sum_{t=1}^{\infty} \alpha_t \, Y_t$, and $t^r Y_t$. Then, one can estimate $\CondExp{t^r Y_t}{\Fa_s} - s^r Y_s$ from above by an absolutely convergent series for some rate $r > 0$ where $\varliminf_{s \uparrow \infty} s^r Y_s = 0$ if $Y_t > 0$. Due to the requirement $Y_t > 0$, our result holds only up to the stopping time $\tau := \inf \{ t > 0 : F(w_t) = F_*\}$.

\begin{proposition}
    Let $X_t, Y_t, Z_t$ be non-negative for all $t \in \N_0$ and $a_t > 0$ be such that
    \begin{enumerate}
        \item $a_t, Z_t \in \ell^1(\N)$.
        \item $Y_t \leq (1 + a_{t-1})Y_{t-1} - X_{t-1} + Z_{t-1}$.
    \end{enumerate}
    Then $Y_t \in \ell^{\infty}(\N)$ and $X_t \in \ell^1(\N)$.
    \label{p:weak_RS}
\end{proposition}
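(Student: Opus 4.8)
The plan is to iterate the recursive inequality (2) and track the accumulated contributions of the $Z_t$ terms, using the summability of $a_t$ to control the product factors $\prod(1+a_t)$.

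First I would observe that since $a_t \in \ell^1(\N)$, the infinite product $P := \prod_{t=0}^{\infty}(1+a_t)$ converges to a finite positive number, and in particular $P_{n} := \prod_{t=0}^{n}(1+a_t) \leq P < \infty$ for all $n$. The standard trick is to ``divide out'' the growth: set $\widetilde{Y}_t = Y_t / P_{t-1}$ (with $P_{-1}=1$, say), so that from (2),
\begin{equation*}
    \widetilde{Y}_t \leq \widetilde{Y}_{t-1} - \frac{X_{t-1}}{P_{t-1}} + \frac{Z_{t-1}}{P_{t-1}} \leq \widetilde{Y}_{t-1} - \frac{X_{t-1}}{P} + Z_{t-1},
\end{equation*}
where in the last step I used $P_{t-1} \leq P$ for the $X$ term (valid since $X_{t-1}\geq 0$) and $P_{t-1}\geq 1$ for the $Z$ term. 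Summing this telescoping inequality from $t=1$ to $n$ gives
\begin{equation*}
    \widetilde{Y}_n + \frac{1}{P}\sum_{t=0}^{n-1} X_t \leq \widetilde{Y}_0 + \sum_{t=0}^{n-1} Z_t \leq Y_0 + \sum_{t=0}^{\infty} Z_t =: M < \infty,
\end{equation*}
using $Z_t \in \ell^1(\N)$. Since every term on the left is non-negative, this immediately yields both conclusions: $\sum_{t=0}^{\infty} X_t \leq P\cdot M < \infty$, so $X_t \in \ell^1(\N)$; and $\widetilde{Y}_n \leq M$, hence $Y_n = P_{n-1}\widetilde{Y}_n \leq P\cdot M$, so $Y_t \in \ell^{\infty}(\N)$.

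I do not expect a genuine obstacle here — this is the elementary deterministic core of Robbins--Siegmund. The one place to be slightly careful is the bookkeeping of the rescaling: one must check the two-sided bound $1 \leq P_{t-1} \leq P$ is applied to the correct sign of term, and that the telescoped sum is set up so that $\widetilde Y_0 = Y_0$ and no index is off by one. An alternative, perhaps cleaner, route avoiding the product entirely is to apply the discrete Gr\"onwall inequality directly: from (2), $Y_n \leq Y_0\prod_{t=0}^{n-1}(1+a_t) + \sum_{t=0}^{n-1} Z_t \prod_{s=t+1}^{n-1}(1+a_s) \leq P\,(Y_0 + \sum_t Z_t)$, giving boundedness of $Y_t$ at once; then rearranging (2) as $X_{t-1} \leq (1+a_{t-1})Y_{t-1} - Y_t + Z_{t-1}$ and summing, the $Y$-terms telescope up to the harmless error $\sum_t a_t Y_t \leq (\sup_t Y_t)\sum_t a_t < \infty$, and $\sum_t Z_t < \infty$, so $\sum_t X_t < \infty$. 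Either way the proof is short; I would present the rescaling version since it most transparently shows where each hypothesis is used.
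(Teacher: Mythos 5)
Your proof is correct, and your preferred (rescaling) route is a mild but genuine variant of the paper's argument. The paper first drops $X_{t-1}$, sums, and invokes its discrete Gr\"onwall lemma (Proposition \ref{p:gronwall}) to get $Y_t \lesssim (Y_0 + \|Z\|_{\ell^1})\prod_s(1+a_s) < \infty$, and then makes a second pass through the summed inequality keeping $X_{t-1}$, using the uniform bound on $Y_t$ to absorb $\sum_t a_t Y_t$; this is exactly the ``alternative'' you sketch at the end. Your main argument instead normalizes by the partial products, $\widetilde Y_t = Y_t/P_{t-1}$, which turns the recursion into $\widetilde Y_t \leq \widetilde Y_{t-1} - X_{t-1}/P + Z_{t-1}$, so that a single telescoping sum delivers both conclusions at once: $\sup_n \widetilde Y_n \leq Y_0 + \|Z\|_{\ell^1}$ (hence $Y_t \in \ell^\infty$ since $P_{t-1} \leq P$) and $\sum_t X_t \leq P\,(Y_0 + \|Z\|_{\ell^1})$. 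The bookkeeping you flag is indeed the only delicate point, and you handle it correctly: $(1+a_{t-1})/P_{t-1} = 1/P_{t-2}$ gives the telescoping, $P_{t-1}\leq P$ is applied to the (nonpositive) $X$ term and $P_{t-1}\geq 1$ (valid since $a_t>0$) to the $Z$ term, and $\widetilde Y_0 = Y_0$. What the rescaling buys is avoiding a separate Gr\"onwall lemma and a second summation pass, at the cost of the slightly fussier index/normalization bookkeeping; what the paper's route buys is reuse of a lemma it states anyway and a presentation closer to the classical Robbins--Siegmund bookkeeping. Either version is complete and uses the hypotheses $a_t, Z_t \in \ell^1(\N)$ in the same essential ways.
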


\begin{lemma}
    Let $Y_t, Z_t$ be a sequence of positive and non-negative random variables respectively, adapted to a filtration $\Fa_t$ and $c_1, c_2 \in \R_+$ be constants. Let $\alpha_t = \Theta(t^{-p})$ where $p \in (\frac{1}{1+\gamma}, 1)$ and $\gamma \in (0, 1]$. If both of the following holds
    \begin{enumerate}
        \item For a fixed $s > 0$, $\CondExp{Y_{t+1}}{\Fa_s} \leq \CondExp{Y_t}{\Fa_s} (1 + c_1 \, \alpha_t^{1+\gamma}) + \alpha_t^{1+\gamma} (\CondExp{Z_t}{\Fa_s} + c_2)$ for all $t \geq s$.
        \item $\sum_{t=1}^{\infty} \alpha_t \, \Exp[Y_t] < \infty$ and $\sum_{t=1}^{\infty} \alpha_t \, \Exp[Z_t] < \infty$.\label{i:l:weak_RS_rate:expectation}
    \end{enumerate}
    then $\lim_{t \rightarrow \infty} t^{\min(1-p,p(1+\gamma)-1)-\eps} \, Y_t = 0$.
    \label{l:weak_RS_rate}
\end{lemma}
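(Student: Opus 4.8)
Write $\theta:=\min(1-p,\,p(1+\gamma)-1)$; since $p\in(\tfrac1{1+\gamma},1)$ both arguments are positive, and $p(1+\gamma)-1=p\gamma-(1-p)<p\gamma$, so $\theta<p\gamma$ as well. Fix $\eps\in(0,\theta)$ (the case of large $\eps$ is immediate). The plan has two stages: first prove the rate for the conditional expectations $\E[Y_t\mid\Fa_s]$ by a deterministic argument built on Proposition \ref{p:weak_RS}, then upgrade it to a pathwise rate for $Y_t$. Throughout I use that $p<1$ gives $\sum_t\alpha_t=\infty$, that $p(1+\gamma)>1$ gives $\sum_t\alpha_t^{1+\gamma}<\infty$, and that taking expectations in hypothesis \ref{i:l:weak_RS_rate:expectation} with Tonelli yields, for each fixed $s$, $\sum_t\alpha_t\,\E[Y_t\mid\Fa_s]<\infty$ and $\sum_t\alpha_t\,\E[Z_t\mid\Fa_s]<\infty$ a.s., with every $\E[Y_t],\E[Z_t]$ finite.

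\textbf{Stage 1.} Fix $s$ and pick $\mu\in(\theta-\eps,\theta)$, so $\mu<1-p$, $\mu<p(1+\gamma)-1$, $\mu<p\gamma$. For a.e.\ $\omega$, the sequences $y_t:=\E[Y_t\mid\Fa_s](\omega)$, $z_t:=\E[Z_t\mid\Fa_s](\omega)$ are deterministic, non-negative, satisfy hypothesis 1 for $t\ge s$, and are $\alpha$-summable. Put $\hat y_t:=t^{\mu}y_t$; multiplying the recursion by $(t+1)^{\mu}$ and using $(1+1/t)^{\mu}\le 1+\mu/t$,
\begin{equation*}
\hat y_{t+1}\ \le\ \bigl(1+c_1\alpha_t^{1+\gamma}\bigr)\hat y_t\ +\ \mu\,t^{\mu-1}y_t\ +\ (t+1)^{\mu}\alpha_t^{1+\gamma}(z_t+c_2).
\end{equation*}
The key observation is that each of the last two groups is summable in $t$: $t^{\mu-1}y_t\lesssim\alpha_t y_t$ since $\mu-1\le-p$ (this is exactly where $\mu<1-p$ enters); $(t+1)^{\mu}\alpha_t^{1+\gamma}c_2\asymp t^{\mu-p(1+\gamma)}$ is summable since $\mu<p(1+\gamma)-1$; and $(t+1)^{\mu}\alpha_t^{1+\gamma}z_t\lesssim t^{\mu-p\gamma}\alpha_t z_t\lesssim\alpha_t z_t$ since $\mu<p\gamma$. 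Hence $\hat y_t$ satisfies $\hat y_{t+1}\le(1+a_t)\hat y_t+Z'_t$ with $a_t=c_1\alpha_t^{1+\gamma}\in\ell^1$ and $0\le Z'_t\in\ell^1$, and Proposition \ref{p:weak_RS} gives $\sup_t\hat y_t<\infty$, i.e.\ $\sup_t t^{\mu}\E[Y_t\mid\Fa_s]<\infty$ a.s. Since the Gronwall bound controls this supremum by $s^{\mu}Y_s+\sum_{j\ge s}Z'_j$, which is integrable, taking expectations also yields $\E[Y_t]\lesssim t^{-\mu}$; running this for a second exponent $\mu_2\in(\theta-\eps,\theta)$ records $\E[Y_t]\lesssim t^{-\mu_2}$.

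\textbf{Stage 2.} Taking $s=t$ in hypothesis 1 gives the adapted recursion $\E[Y_{t+1}\mid\Fa_t]\le(1+c_1\alpha_t^{1+\gamma})Y_t+\alpha_t^{1+\gamma}(Z_t+c_2)$. With $\pi_t:=\prod_{j<t}(1+c_1\alpha_j^{1+\gamma})\uparrow\pi_\infty<\infty$ and $G_j:=\alpha_j^{1+\gamma}(Z_j+c_2)/\pi_{j+1}$, a direct computation shows $\hat V_t:=Y_t/\pi_t+\E\bigl[\sum_{j\ge t}G_j\bigm|\Fa_t\bigr]$ is a non-negative supermartingale. The elementary maximal inequality $\lambda\,\P(\sup_{t\ge 2^k}\hat V_t\ge\lambda\mid\Fa_{2^k})\le\hat V_{2^k}$ together with $Y_t\le\pi_\infty\hat V_t$ gives, after taking expectations,
\begin{equation*}
\P\bigl(\textstyle\sup_{t\ge 2^k}Y_t\ge 2^{-k\mu_1}\bigr)\ \lesssim\ 2^{k\mu_1}\E[Y_{2^k}]\ +\ 2^{k\mu_1}\!\!\sum_{j\ge 2^k}\alpha_j^{1+\gamma}(\E[Z_j]+c_2),
\end{equation*}
where $\theta-\eps<\mu_1<\mu_2<\theta$. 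By Stage 1, $2^{k\mu_1}\E[Y_{2^k}]\lesssim 2^{-k(\mu_2-\mu_1)}$; and $\sum_{j\ge 2^k}\alpha_j^{1+\gamma}(\E[Z_j]+c_2)\lesssim 2^{-k(p(1+\gamma)-1)}+\alpha_{2^k}^{\gamma}\sum_{j\ge 2^k}\alpha_j\E[Z_j]$, so the second term is $\lesssim 2^{-k(p(1+\gamma)-1-\mu_1)}+2^{-k(p\gamma-\mu_1)}$, both geometric since $\mu_1<p(1+\gamma)-1$ and $\mu_1<p\gamma$. Thus the right-hand side is summable in $k$, Borel--Cantelli gives $\sup_{t\ge 2^k}Y_t\le 2^{-k\mu_1}$ for all large $k$ a.s., and for $t\in[2^k,2^{k+1})$ this reads $Y_t\lesssim t^{-\mu_1}$; since $\mu_1>\theta-\eps$ we conclude $t^{\theta-\eps}Y_t\to 0$ almost surely.

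The mechanical parts are $(1+1/t)^\mu\le 1+\mu/t$ and the Gronwall/tail bookkeeping. The genuinely delicate point is Stage 2 — converting a rate for $\E[Y_t\mid\Fa_s]$ into one for the last iterate $Y_t$ — where the care needed is that every auxiliary exponent must lie strictly below all three of $1-p$, $p(1+\gamma)-1$, $p\gamma$; since $p(1+\gamma)-1<p\gamma$ automatically, the binding constraint is $\min(1-p,p(1+\gamma)-1)$, which is precisely the exponent in the statement. Beyond Proposition \ref{p:weak_RS}, the only probabilistic ingredients are a maximal inequality for a non-negative supermartingale and Borel--Cantelli — elementary, and not a martingale \emph{convergence} theorem; alternatively one can keep Stage 1 and combine it with the a.s.\ bound $\sum_t\alpha_tY_t<\infty$ (giving a fast subsequence on which $Y_t$ is small) and propagate over dyadic blocks, but controlling a whole block rather than a single time still requires such a maximal estimate.
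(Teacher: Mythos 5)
Your argument is correct, and it reaches the stated rate, but the heart of it runs along a different track than the paper's proof. Stage 1 is essentially the paper's own first step: weight by $t^{\mu}$, absorb the perturbation terms using $\mu<\min(1-p,\,p(1+\gamma)-1)$ (which indeed forces $\mu<p\gamma$), and apply Proposition \ref{p:weak_RS} pathwise to the conditional expectations; your only addition is to take expectations and record $\E[Y_t]\lesssim t^{-\mu}$, which one could get even more directly by taking full expectations in hypothesis 1. Where you genuinely diverge is the passage to the almost sure last-iterate rate. The paper stays Gronwall-only: it sums the weighted recursion from $s$ to $T$, bounds $\big|\CondExp{T^{r}Y_T}{\Fa_s}-s^{r}Y_s\big|$ by tails of series that converge a.s.\ (because $\sum_t\alpha_t Y_t$ and $\sum_t\alpha_t Z_t$ are a.s.\ finite), and sends $T\uparrow\infty$ then $s\uparrow\infty$ to force $\lim_T T^{r}Y_T=0$, with no martingale inequality anywhere, consistent with its stated aim of avoiding martingale theory. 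You instead specialize the hypothesis to $s=t$, build the nonnegative supermartingale $\hat V_t=Y_t/\pi_t+\E[\sum_{j\ge t}G_j\mid\Fa_t]$, and combine the maximal inequality for nonnegative supermartingales with Borel--Cantelli over dyadic blocks. Your route buys a cleaner and more quantitative statement (explicit geometric tail bounds for $\P(\sup_{t\ge 2^k}Y_t\ge 2^{-k\mu_1})$), at the cost of reintroducing a martingale-type maximal inequality that the paper deliberately avoids (though, as you note, it is not a convergence theorem, so the lemma itself is still legitimately proved). One point common to both arguments: hypothesis 1 must be read as holding for every fixed $s$ — you use it at $s=t$, the paper lets $s\uparrow\infty$ — and your exponent bookkeeping $\theta-\eps<\mu_1<\mu_2<\theta$ is exactly the constraint encoded in the paper's choice $r<\min(1-p,\,p(1+\gamma)-1)$.
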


\begin{remark}
    Compared to \cite[lemma 19]{liuyuan2024} and \cite[theorem 1]{robbins1971}, we require $Y_t > 0$ and item \ref{i:l:weak_RS_rate:expectation} where the expectation of the sum is required to be finite. The application of \cref{l:weak_RS_rate} will be for convex cost $F$ and $Y_t = F(w_t) - F_*$. Due to the requirement $Y_t > 0$, instead of stating general convergence rate for $F(w_t) - F_*$, we only make statements about the convergence rate of $F(w_{\tau \wedge t}) - F_*$ where $\tau := \inf \{ t > 0 : F(w_t) = F_* \}$.
\end{remark}

\begin{theorem}
    \label{t:risk_o}
    Let $w_t$ be the iterates in (\ref{eq:shb}). With the notation in table \ref{tab:1}, if $\alpha_t = O(t^{-p})$ with $p \in (\frac{1}{1+\gamma}, 1)$ and both assumption \ref{a:loss} and \ref{a:ABC} hold, then the SHB converges almost surely at rate
    \begin{enumerate}
        \item $\ds \min_{0 \leq s \leq t} \|\nabla F(w_s)\|^2 = o(t^{p-1})$ a.s.-$\Prob$.
        \item If in addition, $\ell(z, \cdot)$ is convex for all $z \in \Zb$, then
        \begin{enumerate}
            \item $\ds \min_{0 \leq s \leq t}  F(w_s) - F_* = o(t^{p-1})$ as $t \uparrow \infty$.
            \item Let $\tau := \inf \{t > 0 : F(w_t) = F_*\}$. We have
            \begin{equation*}
                F(w_{\tau \wedge t}) - F_* = o(t^{r_\gamma \max(p-1,1-(1+\gamma)p)+\eps}) ~~\text{where}~~ r_\gamma = \begin{cases}
                    \frac{2\gamma}{1+\gamma} & \beta \in (0, 1)
                    \\
                    1 & \beta = 0
                \end{cases}.
            \end{equation*}
        \end{enumerate}
    \end{enumerate}
\end{theorem}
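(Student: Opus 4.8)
The plan is to derive, from the SHB recursion (\ref{eq:shb}), a one-step inequality for a suitable Lyapunov-type quantity, verify that it matches the hypotheses of Proposition \ref{p:weak_RS} and Lemma \ref{l:weak_RS_rate}, and then read off the stated rates. First I would rewrite (\ref{eq:shb}) in the SGD form (\ref{eq:sgd2}) with the momentum term absorbed into an auxiliary iterate, following the standard trick of introducing $v_t = w_t + \frac{\beta}{1-\beta}(w_t - w_{t-1})$ (the ``IMA'' or averaged-iterate variable), so that $v_{t+1} - v_t = -\frac{\alpha_t}{1-\beta}\nabla\ell(Z_t,w_t)$. Applying the $(\gamma,L)$-smoothness descent inequality (\ref{eq:gamma_smooth}) to $F$ at $v_{t+1}$ versus $v_t$ — $F$ inherits $(\gamma,L)$-smoothness by Proposition \ref{p:F_inherits_L} — gives, after taking $\CondExp{\cdot}{\Fa_t}$ and using $\CondExp{\nabla\ell(Z_t,w_t)}{\Fa_t}=\nabla F(w_t)$ and the ABC condition (Assumption \ref{a:ABC}) to bound $\CondExp{\|\nabla\ell(Z_t,w_t)\|^{1+\gamma}}{\Fa_t}$, a recursion of the shape
\begin{equation*}
\CondExp{F(v_{t+1})}{\Fa_t} \le F(v_t) - c\,\alpha_t\,\InnerProd{\nabla F(w_t)}{\text{(correction)}} + c'\,\alpha_t^{1+\gamma}\bigl(A(F(w_t)-F_*) + B\|\nabla F(w_t)\|^{1+\gamma} + C\bigr).
\end{equation*}
Controlling the cross term (the discrepancy between $\nabla F(v_t)$ and $\nabla F(w_t)$, and between $F(v_t)$ and $F(w_t)$) via $\|v_t - w_t\| \lesssim \frac{\beta}{1-\beta}\alpha_{t-1}\|\nabla\ell(Z_{t-1},w_{t-1})\|$ and smoothness, one obtains a clean recursion for $Y_t = F(v_t) - F_*$ (plus possibly a small multiple of $\|\nabla F(w_t)\|^2$ or $\|w_t-w_{t-1}\|^2$) of exactly the form required: $\CondExp{Y_{t+1}}{\Fa_s} \le (1+c_1\alpha_t^{1+\gamma})\CondExp{Y_t}{\Fa_s} - (\text{gradient term}) + \alpha_t^{1+\gamma}(\CondExp{Z_t}{\Fa_s}+c_2)$.

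Next I would establish the summability input. Taking $s$ fixed (say $s=1$) and full expectations, Proposition \ref{p:weak_RS} applies with $a_t = c_1\alpha_t^{1+\gamma}$ (summable since $p(1+\gamma)>1$), $Y_t = \E[F(v_t)-F_*]$, $X_t \asymp \alpha_t\E\|\nabla F(w_t)\|^2$, and $Z_t$ the constant-order remainder times $\alpha_t^{1+\gamma}$ — but one must first absorb the $B\|\nabla F(w_t)\|^{1+\gamma}$ term, which I would handle by the estimate $\|\nabla F(w_t)\|^{1+\gamma} \le 2\|\nabla F(w_t)\|^2 + 2$ as in (\ref{eq:ABC:grad_F}) and then absorbing the resulting $\alpha_t^{1+\gamma}\|\nabla F(w_t)\|^2$ into the descent term $X_t$ since $\alpha_t^{1+\gamma} = o(\alpha_t)$. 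Proposition \ref{p:weak_RS} then yields $\sup_t \E[F(v_t)-F_*] < \infty$ and $\sum_t \alpha_t \E\|\nabla F(w_t)\|^2 < \infty$, hence also (convexity not yet needed) $\min_{s\le t}\|\nabla F(w_s)\|^2 = o(t^{p-1})$ by a Kronecker/Abel argument against $\sum \alpha_s \asymp t^{1-p}$, giving part (1). For part (2a), convexity of $\ell(z,\cdot)$ passes to $F$, and $F(w_t)-F_* \le \InnerProd{\nabla F(w_t)}{w_t - w_*} \le \|\nabla F(w_t)\|\,\|w_t-w_*\|$; since $F(v_t)-F_*$ is bounded in expectation and $\|v_t - w_*\|^2 \lesssim F(v_t)-F_*$ is false in general but $\|w_t - w_*\|$ stays bounded in the relevant sense via the recursion, one concludes $\sum_t \alpha_t\E[F(w_t)-F_*] < \infty$, and again the minimum rate $o(t^{p-1})$ follows. (Alternatively, run Proposition \ref{p:weak_RS} directly with $X_t \asymp \alpha_t(F(w_t)-F_*)$ using the gradient-domination-free convex descent bound.)

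For the last-iterate rate (2b), the point is to feed the recursion into Lemma \ref{l:weak_RS_rate}: with $Y_t = F(v_t)-F_*$ we have verified hypothesis (1) (the multiplicative-plus-additive recursion) and hypothesis (2) ($\sum\alpha_t\E[Y_t]<\infty$ and $\sum\alpha_t\E[Z_t]<\infty$) from the previous paragraph, so Lemma \ref{l:weak_RS_rate} gives $t^{\min(1-p,\,p(1+\gamma)-1)-\eps}Y_t \to 0$ a.s., i.e. $F(v_t)-F_* = o(t^{-\min(1-p,p(1+\gamma)-1)+\eps}) = o(t^{\max(p-1,\,1-(1+\gamma)p)+\eps})$. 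It remains to transfer this from $v_t$ to $w_t$: write $F(w_t)-F_* \le F(v_t)-F_* + \InnerProd{\nabla F(v_t)}{w_t - v_t} + \frac{L}{1+\gamma}\|w_t - v_t\|^{1+\gamma}$ by (\ref{eq:gamma_smooth}), bound $\|w_t-v_t\| = \frac{\beta}{1-\beta}\|w_t-w_{t-1}\| \lesssim \alpha_{t-1}\|\nabla\ell(Z_{t-1},w_{t-1})\|$, and control $\|\nabla F(v_t)\|$ by smoothness against $F(v_t)-F_*$ and $\|w_t-w_*\|$. This is where the factor $r_\gamma = \frac{2\gamma}{1+\gamma}$ for $\beta\in(0,1)$ versus $r_\gamma = 1$ for $\beta = 0$ emerges: when $\beta=0$, $v_t = w_t$ and no transfer loss occurs (rate exponent $\max(p-1,1-(1+\gamma)p)$, the Lei--Shi--Guo rate); when $\beta>0$, the momentum-correction term $\|w_t-w_{t-1}\|^{1+\gamma}$ is only controlled at order $\alpha_{t-1}^{1+\gamma}$ times powers of the gradient, and balancing it against the $v_t$-rate — using $\|\nabla F(w_t)\|^2 = o(t^{p-1})$ along a subsequence and Hölder interpolation between the $L^2$ and $L^{1+\gamma}$ control of the gradient — degrades the exponent by the factor $\frac{2\gamma}{1+\gamma}$. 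The main obstacle, and the step I expect to require the most care, is precisely this transfer argument: making the passage from $v_t$ to $w_t$ rigorous a.s. (not just in expectation) while keeping the $\eps$-loss absorbable, since it requires pointwise control of $\|\nabla\ell(Z_{t-1},w_{t-1})\|$ which is only available in conditional $(1+\gamma)$-th moment — so one likely needs an auxiliary Borel--Cantelli / Chebyshev argument (using $\sum\alpha_t^{1+\gamma}\E\|\nabla\ell\|^{1+\gamma}$-type sums, which are finite) to show $\alpha_{t-1}\|\nabla\ell(Z_{t-1},w_{t-1})\| = o(t^{\text{something}})$ a.s. with a good enough exponent, and this is the delicate quantitative bookkeeping that pins down $r_\gamma$.
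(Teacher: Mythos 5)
Your overall route is the paper's route (auxiliary averaged iterate, a multiplicative-plus-additive recursion fed into Proposition \ref{p:weak_RS} and Lemma \ref{l:weak_RS_rate}, then a transfer back to $w_t$), but as written there are genuine gaps. The central one is the bound $\|v_t - w_t\| \lesssim \frac{\beta}{1-\beta}\,\alpha_{t-1}\|\nabla\ell(Z_{t-1},w_{t-1})\|$, which you use both to close the recursion and in the final transfer: for $\beta>0$ this is false, since $w_t - w_{t-1} = -\sum_{s<t}\beta^{t-1-s}\alpha_s\nabla\ell(Z_s,w_s)$ is a geometric sum of \emph{all} past stochastic gradients, not a single-step quantity. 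The correct fix, which you only mention parenthetically (``plus possibly a small multiple of $\|w_t-w_{t-1}\|^2$''), is essential and is exactly what the paper does: take $Y_t = F(z_t)-F_* + \|v_t\|^2$ with $v_t = w_t - w_{t-1}$, $z_t = w_t + \frac{\beta}{1-\beta}v_t$. The cross term $\langle\nabla F(z_t)-\nabla F(w_t),\cdot\rangle$ produces an additive $\e_2\|v_t\|^2$ with a \emph{constant} (not $\alpha_t^{1+\gamma}$) coefficient, which can only be absorbed because the velocity recursion contracts with factor $\beta^2(1+\e_1)<1$; without adding $\|v_t\|^2$ to the Lyapunov function your ``clean recursion'' for $F(v_t)-F_*$ alone does not have the form required by Lemma \ref{l:weak_RS_rate}. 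Once $\|v_t\|^2$ is inside $Y_t$, the obstacle you flag as the delicate step (pointwise control of $\alpha_{t-1}\|\nabla\ell(Z_{t-1},w_{t-1})\|$ via Borel--Cantelli) disappears entirely: the a.s.\ rate for $\|v_t\|^2$ comes for free from Lemma \ref{l:weak_RS_rate}, and the transfer is the purely deterministic inequality (\ref{eq:c:F_shb:1}),
\begin{equation*}
F(w_t)-F_* \le F(z_t)-F_* + \tfrac{\beta}{1-\beta}\Bigl(\tfrac{\|\nabla F(z_t)\|^2+\|v_t\|^2}{2}\Bigr) + \tfrac{L}{1+\gamma}\Bigl(\tfrac{\beta}{1-\beta}\|v_t\|\Bigr)^{1+\gamma},
\end{equation*}
with $\|\nabla F(z_t)\|^2 \lesssim (F(z_t)-F_*)^{\frac{2\gamma}{1+\gamma}}$ by Lemma \ref{l:lsg_13}. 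That last step is the actual source of $r_\gamma = \frac{2\gamma}{1+\gamma}$ (together with $\|v_t\|^{1+\gamma}\le Y_t^{(1+\gamma)/2}$ and $\frac{2\gamma}{1+\gamma}\le\frac{1+\gamma}{2}$); your attribution of $r_\gamma$ to a subsequence/H\"older interpolation balancing of $\|w_t-w_{t-1}\|^{1+\gamma}$ against the step size is not a correct accounting and rests on the false single-step bound above.

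The second gap is the verification of condition 2 of Lemma \ref{l:weak_RS_rate} in the convex case: you need $\sum_t\alpha_t\,\E[F(z_t)-F_*]<\infty$ (and $\sum_t\alpha_t\E\|v_t\|^2<\infty$), and your paragraph for this is explicitly hand-waving (``$\|v_t-w_*\|^2\lesssim F(v_t)-F_*$ is false in general but $\|w_t-w_*\|$ stays bounded in the relevant sense''). The paper proves it (Proposition \ref{p:F_shb_wt}, claims 2 and 4) by running the convexity recursion for $U_t=\E\|z_t-w_*\|^2$, where $\langle\nabla F(w_t),z_t-w_*\rangle \ge F(w_t)-F_* + \frac{\beta}{1-\beta}\bigl(F(w_t)-F(w_{t-1})\bigr)$ yields a telescoping term (using monotone step sizes), and then transfers to $F(z_t)-F_*$ by smoothness; your ``alternative'' one-liner points in this direction but does not identify the Lyapunov quantity or handle the momentum inner product, so as it stands the hypothesis of Lemma \ref{l:weak_RS_rate} is not established. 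Parts (1) and (2a) of the theorem (the $\min_{s\le t}$ rates via $\sum\alpha_t\E\|\nabla F(w_t)\|^2<\infty$ and Knopp's lemma) are fine in your sketch and match the paper.
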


\begin{remark}
    The slowdown factor $r_\gamma$ due to the smoothness parameter $\gamma$ appears only with momentum $\beta > 0$ and $\gamma < 1$. The $r_\gamma$ factor is counter-intuitive since momentum is supposed to make convergence faster when the gradient is small. Examining (\ref{eq:c:F_shb:1}), the factor $\frac{\beta}{1-\beta}$ appears as a scalar multiplier to the `slower' converging terms, which slows down the overall rate for $\beta > 0$.
\end{remark}

\subsection{Convergence Rate with High Probability}
\label{s:main_results:2}

Observe that the inner product term on the right hand side of \eqref{eq:sgd:intro:dF} is a martingale difference. If we sum both sides of \eqref{eq:sgd:intro:dF} from $t$ to $T-1$, then we get
\begin{equation}
    \begin{aligned}
        F(w_T) - F_* &\leq [F(w_t) - F_*] + \sum_{s=t}^{T-1} \alpha_s \bigg( \InnerProd{\nabla F(w_s)}{\delta m_s} - \|\nabla F(w_s)\|^2 \bigg)
        \\
        &+ \frac{L}{1+\gamma} \sum_{s=t}^{T-1} \alpha_s^{1+\gamma} \, \|\nabla\ell(Z_s, w_s)\|^{1+\gamma}.
    \end{aligned}
    \label{eq:sgd:intro:ftoc_F}
\end{equation}
The term $M_T := \sum_{s=t}^{T-1} \alpha_s \InnerProd{\nabla F(w_s)}{\delta m_s}$ is now a martingale which can be estimated by a term smaller than $\sum_{s=t}^{T-1} \alpha_s \|\nabla F(w_s)\|^2$ with high probability using martingale concentration inequalities of \terminology{Bernstein} and \terminology{Azuma-Hoeffding}. These concentration inequalities require strong uniform bound on the martingale difference, which assuming that $\nabla F$ is Lipschitz, can be estimated by Cauchy-Schwarz $|\InnerProd{\nabla F(w_s)}{\delta m_s}| \leq \|w_s - w_*\| \cdot \|\delta m_s\|$. Therefore, such an approach requires estimating $\|w_s - w_*\|$ and $\|\delta m_s\|$ either almost surely or with high probability (\cref{l:conc_1} and \cref{c:max_noise}). To produce uniform estimates for these quantities to apply concentration inequalities, we require that the step size be `polynomially bounded', i.e. there exists $c_1, c_2 > 0$ such that $c_1 t^{-p} \leq \alpha_t \leq c_2 t^{-p}$ for all $t > 0$ (cf. \cref{tab:1}).

There is a nice intuition associated to this approach. Compared to a deterministic algorithm where the error $F(w_t) - F_*$ decreases at every iteration \cite[thm 3.4, lemma 2.28]{garrigos2024}, the effectiveness of a stochastic algorithm is shown over several iterations $[t, T]$. That is, one must wait until inefficiencies due to noise `averages out' over several iterations. Therefore, estimating $F(w_t) - F_*$ as a sum over an interval as in \eqref{eq:sgd:intro:ftoc_F} can be meaningful. We remark that with existing assumptions (\cref{a:loss} and \cref{a:ABC}), it is difficult to show directly that $F(w_t) - F_*$ decreases almost surely by conventional means (e.g. Borel-Cantelli), which is a key to achieve an optimal convergence rate of $O(t^{-1/2})$ shown in \cite{agarwal2012} (refer to \cref{r:thesis:sgd:fast_conv}).

\begin{theorem}
    Let $z_t,w_t,v_t$ be the SHB iterates as in (\ref{eq:shb}) and (\ref{eq:shb3}) and $K_0(L,\gamma,\beta),\, K_5(L,\gamma,\beta)$ be positive constants defined in (\ref{eq:const}) and proposition \ref{p:wt_hp}. If $\ell(z, \cdot)$ is convex for all $z \in \Zb$, assumption \ref{a:loss} and assumption \ref{a:ell_cond} both hold with $\gamma = 1$, $\beta \in [0, 1)$, and $\alpha_t$ satisfies
    \begin{enumerate}
        \item $\ds \alpha_0 \leq \min \left( 1, \frac{1}{2 \sqrt{K_5}} , K_0 \right)$.
        \item $\alpha_{t+1} \leq \alpha_{t}$ for all $t \geq 0$.
        \item $\alpha_t = \Theta(t^{-p})$ for $p \in (\frac{1}{2}, 1)$.
    \end{enumerate}
    then
    \begin{equation*}
        \Prob \left[ F(w_{T+1}) - F_* = O \left( T^{\max(p - 1,-2p+1)} \left( \log \frac{T}{\delta}\right)^2 \right) \right] \geq 1-\delta.
    \end{equation*}
    \label{t:risk_hp}
\end{theorem}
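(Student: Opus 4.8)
The plan is to follow the blueprint of Lei, Shi and Guo \cite{lei2018}, adapted to the heavy-ball reformulation (\ref{eq:shb3}). There are two independent probabilistic ingredients, each costing a factor $1-\delta/2$. The first is high-probability boundedness of the iterates: by proposition \ref{p:wt_max} (equivalently proposition \ref{p:wt_hp}), on an event $A$ with $\P(A)\ge 1-\delta/2$ one has $\|w_t-w_*\|\lesssim\log(T/\delta)$ for every $t\le T+1$. Since $\gamma=1$, lemma \ref{l:lsg_13} then gives $\|\nabla F(w_t)\|\lesssim\log(T/\delta)$, lemma \ref{l:lsg_sb_ell}.\ref{i:l:lsg_sb_ell:dmt} gives $\|\delta m_t\|^2\lesssim\|w_t-w_*\|^2+1\lesssim\log^2(T/\delta)$, and unrolling (\ref{eq:shb}) gives $\|w_{t+1}-w_t\|\lesssim\alpha_t\log(T/\delta)/(1-\beta)$, all valid on $A$. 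The second ingredient is a concentration bound for the noise martingales, obtained after stopping the chain at the first exit from the ball $\{\|w-w_*\|\le c\log(T/\delta)\}$ so the increments are genuinely bounded; this is the content of lemma \ref{l:sum_gradF_mt}, used on an event $B$ with $\P(B)\ge 1-\delta/2$. Everything below takes place on $A\cap B$, which has probability at least $1-\delta$.

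The core estimate is produced by running two recursions over a window $[\tau,T]$ and combining them. Writing $G_t:=F(w_t)-F_*$, I would first use convexity of $\ell$ (hence of $F$) together with (\ref{eq:shb3}), in which the auxiliary point $v_t$ satisfies $v_{t+1}-v_t=-\tfrac{\alpha_t}{1-\beta}\nabla\ell(Z_t,w_t)$: expanding $\|v_{t+1}-w_*\|^2$, the inner product $\langle\nabla F(w_t),v_t-w_*\rangle$ is bounded below, via convexity and the hypothesis $\alpha_{t+1}\le\alpha_t$, by a telescoping combination of $G_t$ and $G_{t-1}$; the noise splits off a martingale difference $\langle\delta m_t,v_t-w_*\rangle$, while the second-moment term is controlled by $\alpha_t^2\|\nabla\ell(Z_t,w_t)\|^2\lesssim\alpha_t^2 G_t+\alpha_t^2\|\delta m_t\|^2$. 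Choosing $\alpha_0$ small (hypotheses (1)--(2)) lets me absorb the $\alpha_t^2 G_t$ into the main $-\alpha_t G_t$ term; on $A$ one has $\sum_{t\ge\tau}\alpha_t^2\|\delta m_t\|^2\lesssim\log^2(T/\delta)\sum_{t\ge\tau}\alpha_t^2\lesssim\log^2(T/\delta)\,\tau^{1-2p}$ (using $p>\tfrac12$), and on $B$ lemma \ref{l:sum_gradF_mt} dominates the martingale sum by a polylogarithmic-in-$(T/\delta)$ factor. Summing and telescoping yields, on $A\cap B$,
\begin{equation*}
    \sum_{t=\tau}^{T}\alpha_t\,G_t\;\lesssim\;\log^2(T/\delta)\,\bigl(\|v_\tau-w_*\|^2+1\bigr)\;\lesssim\;\log^2(T/\delta),
\end{equation*}
and in particular $\min_{\tau\le t\le T}G_t\lesssim\log^2(T/\delta)/\sum_{t=\tau}^{T}\alpha_t$.

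To upgrade this best-case bound to the last iterate I would use the $(1,L)$-smoothness recursion (\ref{eq:t:hp:F_wt}): for each $t\in[\tau,T]$, summing the descent inequality (\ref{eq:gamma_smooth}) from $t$ to $T$, absorbing the momentum cross-terms $\beta\langle\nabla F(w_s),w_s-w_{s-1}\rangle$ with a step-size-weighted Young's inequality (legitimate since $\alpha_{s-1}^2/\alpha_s\asymp\alpha_s$ for $\alpha_s=\Theta(s^{-p})$), dropping the resulting nonpositive $-\sum_s\alpha_s\|\nabla F(w_s)\|^2$, and controlling the noise martingale $\sum_{s=t}^{T}\alpha_s\langle\nabla F(w_s),\delta m_s\rangle$ uniformly in the window start $t$ by a maximal Azuma--Hoeffding (Bernstein) inequality on $B$, gives $G_{T+1}\le G_t+R_{t,T}$ with $R_{t,T}\lesssim\log^2(T/\delta)\sum_{s=t}^{T}\alpha_s^2$. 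Multiplying by $\alpha_t$, summing over $t\in[\tau,T]$, and inserting the bound from the previous paragraph,
\begin{equation*}
    G_{T+1}\,\Bigl(\textstyle\sum_{t=\tau}^{T}\alpha_t\Bigr)\;\le\;\sum_{t=\tau}^{T}\alpha_t G_t+\sum_{t=\tau}^{T}\alpha_t R_{t,T}\;\lesssim\;\log^2(T/\delta)\Bigl(1+\textstyle\sum_{s=\tau}^{T}\alpha_s^2\sum_{t=\tau}^{s}\alpha_t\Bigr).
\end{equation*}
Taking $\tau$ a fixed constant, $\sum_{t=\tau}^{T}\alpha_t\asymp T^{1-p}$, so the first term contributes $\log^2(T/\delta)\,T^{p-1}$, and $\sum_s\alpha_s^2\sum_{t\le s}\alpha_t\asymp\sum_s s^{1-3p}$ contributes $\log^2(T/\delta)\,T^{2-3p}/T^{1-p}=\log^2(T/\delta)\,T^{1-2p}$ when $p\le\tfrac23$ and $\lesssim\log^2(T/\delta)\,T^{p-1}$ when $p>\tfrac23$; hence $G_{T+1}\lesssim\log^2(T/\delta)\,T^{\max(p-1,-2p+1)}$ on $A\cap B$, which is the claim.

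The main obstacle is the concentration step, for two reasons. First, the martingale bound must hold \emph{uniformly over all window starts} $t\in[\tau,T]$ --- equivalently one needs a maximal inequality for the partial sums $\max_{\tau\le t\le T}\bigl|\sum_{s=\tau}^{t}\alpha_s\langle\cdot,\delta m_s\rangle\bigr|$ --- because $t$ ranges freely in the last step; this forces the stopping-time truncation at exit from the $\log(T/\delta)$-ball and is precisely where the extra $\log(T/\delta)$ factors (beyond $\log(1/\delta)$) enter, so that carefully tracking how $\|w_t-w_*\|\lesssim\log(T/\delta)$ propagates through $\|\delta m_t\|$ and the conditional variances is what pins the exponent of the logarithm to exactly $2$. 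Second, the momentum makes the recursion non-Markovian: the cross-terms $\langle\nabla F(w_s),w_s-w_{s-1}\rangle$ and the offset $\|v_s-w_s\|=\tfrac{\beta}{1-\beta}\|w_s-w_{s-1}\|$ must be absorbed without destroying the telescoping structure, which is exactly what the hypotheses $\alpha_{t+1}\le\alpha_t$ and $\alpha_0$ small (in terms of $K_0,K_5$) are there to guarantee.
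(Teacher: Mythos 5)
Your overall architecture (high-probability $\log(T/\delta)$-boundedness of the iterates, Bernstein/Azuma bounds for the noise martingales after truncation, a weighted bound $\sum_t\alpha_t(F(w_t)-F_*)\lesssim\mathrm{polylog}$, then an upgrade from a good window iterate to the last iterate) matches the paper's, and your $\alpha_t$-weighted averaging of $G_{T+1}\le G_t+R_{t,T}$ over $t\in[\tau,T]$ is a legitimate alternative to the paper's choice of a single good index in $[\lfloor T/2\rfloor,T]$. But there is a genuine gap in the step that produces $R_{t,T}\lesssim\log^2(T/\delta)\sum_{s=t}^T\alpha_s^2$ when $\beta\in(0,1)$, i.e. exactly in the momentum case that is the point of the theorem. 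Your plan is to absorb the cross-terms $\beta\langle\nabla F(w_s),w_s-w_{s-1}\rangle$ by a step-size-weighted Young inequality into $-\alpha_s\|\nabla F(w_s)\|^2$. The absorbed half is fine, but the leftover is $\tfrac{\beta^2}{2\alpha_s}\|v_s\|^2$, and on the good event $\|v_s\|^2\asymp\alpha_s^2\log(T/\delta)$ (up to an exponentially small $\beta^{s}$ correction), so this leftover is of order $\alpha_s\log(T/\delta)$ per step. Summed over $s\in[t,T]$ it is of order $\bigl(T^{1-p}-t^{1-p}\bigr)\log(T/\delta)$, not $\sum_{s\ge t}\alpha_s^2\asymp t^{1-2p}$; and even after your $\alpha_t$-weighted averaging it contributes on the order of $T^{1-p}\log(T/\delta)$ to the bound on $G_{T+1}$, which diverges. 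So the claimed form of $R_{t,T}$ does not follow from the argument as written, and the final rate is not obtained for $\beta>0$. (For $\beta=0$ there are no cross-terms, $\|v_{s+1}\|^2=\alpha_s^2\|\nabla\ell(Z_s,w_s)\|^2$, and your sketch is essentially the argument of \cite{lei2018}.)

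The missing idea is that $\sum_{s=t}^T\langle\nabla F(w_s),v_s\rangle$ is small only because of cancellation that a term-by-term Young/Cauchy--Schwarz bound cannot see: since $v_s$ is (up to geometric weights $\beta^k$) a sum of past steps $-\alpha_{s-k}\nabla\ell(Z_{s-k},w_{s-k})$, the inner products inherit the negative $-\alpha\|\nabla F\|^2$ contributions. The paper devotes lemma \ref{l:sum_gradF_mt} (which retains the $-\alpha_s\|\nabla F(w_s)\|^2$ terms and uses them to absorb the Bernstein variance, yielding $\sum_{s\ge\tau}\langle\nabla F(w_s),-\alpha_s\nabla\ell(Z_s,w_s)\rangle\lesssim\alpha_\tau\log^{2}$), lemma \ref{l:sum_dmt_vt}, proposition \ref{p:sum_at_vt}, proposition \ref{p:sum_vt}, and proposition \ref{p:sum_gradF_vt} precisely to this point: using $\gamma=1$ one gets the linear recursion $\langle\nabla F(w_s),v_s\rangle\le\beta\langle\nabla F(w_{s-1}),v_{s-1}\rangle+\Lambda_{s-1}+\|v_s\|^2$ with $\Lambda_{s-1}=\langle\nabla F(w_{s-1}),-\alpha_{s-1}\nabla\ell(Z_{s-1},w_{s-1})\rangle$, which is solved by the discrete Gronwall/variation-of-constants formula (proposition \ref{p:ode}) and, combined with the window bounds on $\sum\|v_s\|^2$ and the $\Lambda$-sums, gives $\sum_{s=t}^T\langle\nabla F(w_s),v_s\rangle\lesssim\log^2(T/\delta)\max\bigl(t^{1-2p},\beta^{t}\bigr)$. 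With that corrected $R_{t,T}\lesssim\log^2(T/\delta)\max\bigl(t^{1-2p},\alpha_t,\beta^{t}\bigr)$ your final arithmetic (and equally the paper's choice $T_0=\lfloor T/2\rfloor$ in theorem \ref{t:hp}) does give $F(w_{T+1})-F_*\lesssim T^{\max(p-1,1-2p)}\log^2(T/\delta)$; without it, the momentum case is not proved. A secondary point to tighten: proposition \ref{p:wt_max} bounds $\|w_t-w_*\|^2$, not $\|w_t-w_*\|$, by $K_6\log(T/\delta)$, so the powers of the logarithm in your intermediate bounds need to be tracked with this in mind.
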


\begin{remark}
    We remark that it is required to have $c_1 t^{-p} \leq \alpha_t \leq c_2 t^{-p}$ for all $t > 0$, which is more stringent than the asymptotic $\Theta$-bound or $O$-bound since we need to estimate ratios of the form $\alpha_{t+s}/\alpha_t$ in (\ref{eq:p:sum_vt:sum_xt}) and $(\sum \alpha_t)^{-1}$ in theorem \ref{t:hp}. This result is consistent with theorem \ref{t:risk_o} and \cite[theorem 13]{liuyuan2024}.
\end{remark}

\section{Almost Sure Convergence Rate}
\subsection{Stochastic Gradient Descent}

\begin{proposition}
    If assumption \ref{a:loss}, assumption \ref{a:ABC} all hold and $\alpha_t \in \ell^{1+\gamma}(\N)$ satisfies $\alpha_{t} \leq \min\{1, \frac{1}{\sqrt[\gamma]{LB}}\}$ for all $t \geq 0$, then the SGD algorithm (\ref{eq:shb}) with $\beta = 0$ satisfies
    \begin{enumerate}
        \item $\sup_{t \geq 0} \Exp[F(w_t) - F_*] < \infty$ and $\sum_{t=1}^{\infty} \alpha_{t} \,\Exp\|\nabla F(w_t)\|^2 < \infty$.
        \item If in addition $\alpha_t = O(t^{-p})$ where  $p \in (\frac{1}{1+\gamma}, 1)$, then $\ds \min_{1 \leq s \leq t} \|\nabla F(w_s)\|^2 = o(t^{p-1})$ a.s.-$\Prob$.
        \item If in addition $\ell(z, \cdot)$ is convex for all $z \in \Zb$, then $\sum_{t=1}^{\infty} \alpha_t \, \Exp[F(w_t) - F_*] < \infty$.
    \end{enumerate}
    \label{p:F_wt}
\end{proposition}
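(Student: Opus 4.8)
The plan is to derive a single master recursion for $Y_t := F(w_t) - F_*$ from the $(\gamma,L)$-smoothness bound (\ref{eq:gamma_smooth}) applied along the SGD update (\ref{eq:sgd2}), and then feed it into Proposition \ref{p:weak_RS} and Lemma \ref{l:weak_RS_rate}. Concretely, with $\beta = 0$ we have $w_{t+1} - w_t = -\alpha_t \nabla F(w_t) + \alpha_t \delta m_t$, so (\ref{eq:gamma_smooth}) gives
\begin{equation*}
    F(w_{t+1}) \leq F(w_t) - \alpha_t \|\nabla F(w_t)\|^2 + \alpha_t \InnerProd{\nabla F(w_t)}{\delta m_t} + \frac{L}{1+\gamma} \alpha_t^{1+\gamma} \|\nabla F(w_t) - \delta m_t\|^{1+\gamma}.
\end{equation*}
Taking $\CondExp{\cdot}{\Fa_t}$ kills the cross term since $\CondExp{\delta m_t}{\Fa_t} = 0$ by (\ref{eq:grad_F_estimator}), and the last term is bounded using $\|\nabla F(w_t) - \delta m_t\|^{1+\gamma} = \|\nabla\ell(Z_t,w_t)\|^{1+\gamma}$ together with the ABC condition (Assumption \ref{a:ABC}). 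This yields, a.s.,
\begin{equation*}
    \CondExp{F(w_{t+1}) - F_*}{\Fa_t} \leq (F(w_t) - F_*)\Big(1 + \tfrac{LA}{1+\gamma}\alpha_t^{1+\gamma}\Big) - \alpha_t\Big(1 - \tfrac{LB}{1+\gamma}\alpha_t^{\gamma}\Big)\|\nabla F(w_t)\|^2 + \tfrac{LC}{1+\gamma}\alpha_t^{1+\gamma}.
\end{equation*}
The step-size hypothesis $\alpha_t \leq \min\{1, (LB)^{-1/\gamma}\}$ is exactly what forces $1 - \tfrac{LB}{1+\gamma}\alpha_t^{\gamma} \geq 1 - \tfrac{1}{1+\gamma} = \tfrac{\gamma}{1+\gamma} > 0$, so the coefficient of $\|\nabla F(w_t)\|^2$ is negative and bounded away from zero; this is the key structural point that makes the recursion usable. (A minor technical care: the conditional inequalities in (\ref{eq:grad_F_estimator}) and Assumption \ref{a:ABC} hold in the "integrated against $\IndicatorB{w_t \in A}$" sense, so I should either pass to the regular conditional probability or just take full expectations at the next step — the latter suffices for part 1.)

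For part 1, take full expectations in the master recursion to get $\E[Y_{t+1}] \leq (1 + a_t)\E[Y_t] - X_t + Z_t$ with $a_t = \tfrac{LA}{1+\gamma}\alpha_t^{1+\gamma}$, $X_t = \tfrac{\gamma}{1+\gamma}\alpha_t \E\|\nabla F(w_t)\|^2$, and $Z_t = \tfrac{LC}{1+\gamma}\alpha_t^{1+\gamma}$, all non-negative; since $\alpha_t \in \ell^{1+\gamma}(\N)$ we have $a_t, Z_t \in \ell^1(\N)$, so Proposition \ref{p:weak_RS} immediately gives $\sup_t \E[Y_t] < \infty$ and $\sum_t X_t < \infty$, i.e. $\sum_t \alpha_t \E\|\nabla F(w_t)\|^2 < \infty$. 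Part 2 then follows from part 1 plus $\alpha_t = O(t^{-p})$: since $\sum_t \alpha_t \E\|\nabla F(w_t)\|^2 < \infty$ and $\sum_{s \le t}\alpha_s \asymp t^{1-p}$, a standard Kronecker/Abel argument forces $\big(\sum_{s\le t}\alpha_s\big)\min_{s \le t}\E\|\nabla F(w_s)\|^2 \to 0$, hence $\min_{s\le t}\E\|\nabla F(w_s)\|^2 = o(t^{p-1})$; upgrading to the almost-sure statement requires running the same Kronecker argument on the pathwise series $\sum_t \alpha_t \|\nabla F(w_t)\|^2$, whose a.s. finiteness I get from $\sum_t \E[\alpha_t\|\nabla F(w_t)\|^2] < \infty$ via Tonelli and the first Borel–Cantelli / monotone convergence. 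Part 3 uses convexity: when each $\ell(z,\cdot)$ is convex, $F$ is convex, so $F(w_t) - F_* \leq \InnerProd{\nabla F(w_t)}{w_t - w_*} \leq \|\nabla F(w_t)\|\,\|w_t - w_*\|$; combined with Lemma \ref{l:lsg_13}-type bounds controlling $\|w_t - w_*\|^2 \lesssim F(w_t) - F_*$ (or directly $F(w_t)-F_* \lesssim \|\nabla F(w_t)\|^2$ under convexity + smoothness, e.g. via the co-coercivity inequality of Lemma \ref{l:lsg_14}), I get $\alpha_t \E[Y_t] \lesssim \alpha_t \E\|\nabla F(w_t)\|^2$, and summing gives $\sum_t \alpha_t \E[Y_t] < \infty$ from part 1.

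The main obstacle I anticipate is not any single estimate but the bookkeeping around which "co-coercivity"/smoothness lemma (Lemma \ref{l:lsg_13}, Lemma \ref{l:lsg_14}, or Proposition \ref{p:gamma_smooth}) gives the cleanest route from convexity to the bound $F(w_t) - F_* \lesssim \|\nabla F(w_t)\|^2$ in the $\gamma$-Hölder (rather than Lipschitz) regime — with $\gamma < 1$ this inequality degrades to something like $F(w_t) - F_* \lesssim \|\nabla F(w_t)\|^{(1+\gamma)/\gamma}$, which is still summable against $\alpha_t$ after interpolating with $\sup_t \E[Y_t] < \infty$, but the exponent arithmetic needs to be done carefully and is exactly the place where the hypothesis $p > \tfrac{1}{1+\gamma}$ gets used. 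A secondary, purely measure-theoretic nuisance is promoting the $A$-indexed conditional inequalities to genuine a.s. conditional-expectation inequalities so that the pathwise (a.s.) parts of 2 are rigorous; this is routine given the product-measure structure assumed around (\ref{eq:grad_F_estimator}) but should be stated explicitly.
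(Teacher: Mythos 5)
Your parts 1 and 2 follow the paper's route almost verbatim: the $(\gamma,L)$-smoothness descent inequality plus the ABC condition, full expectations, Proposition \ref{p:weak_RS}, and then a Kronecker-type argument on the a.s.-finite pathwise series $\sum_t \alpha_t\|\nabla F(w_t)\|^2$. One small slip there: the ABC condition gives $B\,\|\nabla F(w_t)\|^{1+\gamma}$, not $B\,\|\nabla F(w_t)\|^{2}$, and for $\gamma<1$ you cannot simply replace the former by the latter when $\|\nabla F(w_t)\|<1$; you need the elementary bound $a^{1+\gamma}\leq 2a^2+2$ as in (\ref{eq:ABC:grad_F}), which only changes constants ($2B$ and $C+2$), so this is cosmetic.

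Part 3, however, has a genuine gap. You propose to conclude via $F(w_t)-F_*\lesssim \|\nabla F(w_t)\|^2$ (or $\|w_t-w_*\|^2\lesssim F(w_t)-F_*$) "under convexity + smoothness," but neither inequality follows from convexity and $(\gamma,L)$-H\"older smoothness: the first is a Polyak--{\L}ojasiewicz/gradient-domination condition and the second a quadratic-growth condition, both of which are strictly stronger than convexity. The inequalities actually available (Lemma \ref{l:lsg_13} with $x=w_*$, and Lemma \ref{l:lsg_14}) run in the \emph{opposite} direction, namely $\|\nabla F(w_t)\|^{(1+\gamma)/\gamma}\lesssim F(w_t)-F_*$ and $F(w_t)-F_*\lesssim \|w_t-w_*\|^{1+\gamma}$, so you cannot get $\alpha_t\,\E[F(w_t)-F_*]\lesssim \alpha_t\,\E\|\nabla F(w_t)\|^2$ and then cite part 1; your "obstacle" paragraph reverses the exponent inequality of Lemma \ref{l:lsg_13}. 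The paper's proof avoids this entirely: it sets $W_t=\E\|w_t-w_*\|^2$, expands $W_{t+1}$ along the update, and uses convexity in the form $\InnerProd{\nabla F(w_t)}{w_t-w_*}\geq F(w_t)-F_*$ (which you do write down, but then abandon for Cauchy--Schwarz), so that $-2\alpha_t\,\E[F(w_t)-F_*]$ appears as the nonnegative "$X_t$" term in Proposition \ref{p:weak_RS}, while the noise term $\alpha_t^2\,\E\|\nabla\ell(Z_t,w_t)\|^2$ is summable by the ABC condition, part 1, and $\alpha_t\in\ell^{1+\gamma}\subset\ell^2$; the conclusion $\sum_t\alpha_t\,\E[F(w_t)-F_*]<\infty$ then falls out directly. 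You should replace your part 3 with this second recursion on $\|w_t-w_*\|^2$.
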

\begin{proof}
    Due to $(\gamma, L)$-smoothness of $F$ and assumption \ref{a:ABC}, by proposition \ref{p:gamma_smooth},
    \begin{equation}
        \begin{aligned}
            \CondExp{F(w_{t}) &- F_* - (F(w_{t-1}) - F_*)}{\Fa_{t-1}} \leq \CondExp{F(w_{t}) - F(w_{t-1})}{\Fa_{t-1}}
            \\
            &\leq \InnerProd{ \nabla F(w_{t-1}) }{ \CondExp{w_{t} - w_{t-1}}{\Fa_{t-1}} } + \frac{L}{1+\gamma}\,\CondExp{\|w_{t} - w_{t-1}\|^{1+\gamma}}{\Fa_{t-1}}
            \\
            &\leq -\alpha_{t-1} \|\nabla F(w_{t-1})\|^2 + \frac{L}{1+\gamma} \,\alpha_{t-1}^{1+\gamma} \,\left( A\,(F(w_{t-1}) - F_*) + B\,\|\nabla F(w_{t-1})\|^{1+\gamma} + C \right)
        \end{aligned}
        \label{eq:p:F_wt:iterates}
    \end{equation}
    By (\ref{eq:ABC:grad_F}), we get
    \begin{equation}
        \begin{aligned}
            \CondExp{F(w_{t}) - F_* - &(F(w_{t-1}) - F_*)}{\Fa_{t-1}} \leq -\alpha_{t-1} \|\nabla F(w_{t-1})\|^2
            \\
            &+ \frac{L}{1+\gamma} \,\alpha_{t-1}^{1+\gamma} \,\left( A\,(F(w_{t-1}) - F_*) + 2B\,\|\nabla F(w_{t-1})\|^{2} + 2 + C \right).
        \end{aligned}
        \label{eq:p:F_wt:est}
    \end{equation}
    If $Y_t = \Exp[F(w_t) - F_*]$ and $LB \alpha_{t}^{\gamma} \leq 1$, then
    \begin{equation}
        Y_t \leq \left( 1 + \frac{L A}{1 + \gamma} \,\alpha_{t-1}^{1+\gamma} \right) \,Y_{t-1} - \left( \frac{\gamma}{1+\gamma} \right) \,\alpha_{t-1}\,\Exp\|\nabla F(w_{t-1})\|^2 + \frac{L\,(C + 2)}{1 + \gamma} \,\alpha_{t-1}^{1+\gamma}.
        \label{eq:p:F_wt:E_F_wt}
    \end{equation}
    Applying proposition \ref{p:weak_RS} yields the first claim. For the second claim, note that $\sum \alpha_t \,\|\nabla F(w_t)\|^2 < \infty$ almost surely since its expectation is finite (due to the first claim). This implies
    \begin{equation}
        \sum_{t=1}^{\infty} \left( \alpha_t \cdot \min_{1 \leq s \leq t} \|\nabla F(w_s)\|^2 \right) \leq \sum_{t=1}^{\infty} \alpha_t \, \|\nabla F(w_t)\|^2 < \infty.
        \label{eq:p:F_wt:min_grad_F}
    \end{equation}
    Since in addition $\min_{1 \leq s \leq t} \|\nabla F(w_s)\|^2$ is monotonically decreasing, by \cite[theorem 3.3.1]{knopp1956}, we have $t^{1-p} \,\min_{1\leq s \leq t}\|\nabla F(w_s)\|^2 \rightarrow 0$ almost surely.

    For the third claim, let $W_t = \Exp\|w_t - w_*\|^2$. By the convexity of $F$, we obtain the estimate
    \begin{equation*}
        \begin{aligned}
            W_{t+1} &= W_t + 2 \, \Exp[\InnerProd{w_{t+1} - w_t}{w_t - w_*}] + \Exp\|w_{t+1} - w_t\|^2
            \\
            &= W_t - 2 \alpha_t \, \Exp\InnerProd{\nabla F(w_t)}{w_t - w_*} + \alpha_t^2 \Exp\| \nabla \ell(Z_t, w_t)\|^2
            \\
            &\leq W_t - 2 \alpha_t \, \Exp[F(w_t) - F_*] + \alpha_t^2 \left( A \, \Exp[F(w_t) - F_*] + B \, \alpha_t \, \Exp\|\nabla F(w_t)\|^2 + C \right).
        \end{aligned}
    \end{equation*}
    Since $\alpha_t \in \ell^2(\N)$, by proposition \ref{p:F_wt}, the third term in the above estimate is summable. We may apply proposition \ref{p:weak_RS} and conclude $\sum \alpha_t \, \Exp[F(w_t) - F_*] < \infty$.
\end{proof}

\begin{remark}
    Note that the first two results in proposition \ref{p:F_wt} applies to non-convex cost function, comparable with \cite[thm 5.12]{garrigos2024} by Garrigos and Gower for $\gamma = 1$, which asserts that for a fixed end time $T \geq 1$, under a constant step size of the order $O(T^{-1/2})$, we have $\min_{0 \leq t < T} \Exp[\|\nabla F(w_t)\|^2] = O(T^{-1/2})$. More generally, our result is consistent with \cite[theorem 6.2]{liuyuan2024}.
\end{remark}

\begin{remark}
    From proposition \ref{p:F_wt}, we may conclude that $\Prob(\sum \alpha_t \, \|\nabla F(w_t)\|^2 < \infty) = 1$ and if $F$ is convex, $\Prob[\sum \alpha_t \, (F(w_t) - F_*)] = 1$. Intuitively, this suggests that if $\sum \alpha_t = \infty$, then $\|\nabla F(w_t)\|^2 \rightarrow 0$ almost surely (resp. for the convex case $F(w_t) - F_* \rightarrow 0$ almost surely). Moreover, if $F(w_t) - F_*$ decreases eventually, then we obtain the optimal convergence in \cite{agarwal2012}.
    \label{r:thesis:sgd:fast_conv}
\end{remark}

\begin{corollary}
    \label{cor:grad_F}
    If all assumptions in proposition \ref{p:F_wt} holds and $\alpha_t \notin \ell^1(\N)$, then
    \begin{enumerate}
        \item $\|\nabla F(w_t)\| \rightarrow 0$ almost surely.
        \item If $\ell(z, \cdot)$ is convex for all $z \in \Zb$, then
        \begin{enumerate}
            \item $F(w_t) - F_* \rightarrow 0$ almost surely.
            \item Let $\tau$ be defined as in \cref{t:risk_o}. If $\alpha_t = \Theta(t^{-p})$ for $p \in (\frac{1}{1+\gamma}, 1)$, then almost surely,
            \begin{equation*}
                F(w_{\tau \wedge t}) - F_* = o(t^{\max(p-1,1-(1+\gamma)p)+\eps}).
            \end{equation*}
        \end{enumerate}
    \end{enumerate}
\end{corollary}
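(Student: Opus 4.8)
The plan is to obtain parts (1) and (2a) from the (martingale‑convergence‑free) argument of \cite[theorem 2]{orabona2020}, feeding it the summability already established in proposition \ref{p:F_wt}, and to obtain (2b) as a direct application of lemma \ref{l:weak_RS_rate}. For (1): proposition \ref{p:F_wt}(1) gives $\sum_{t}\alpha_t\,\E\|\nabla F(w_t)\|^2<\infty$, hence $\sum_{t}\alpha_t\|\nabla F(w_t)\|^2<\infty$ a.s.-$\P$ by Tonelli's theorem, while $\alpha_t\notin\ell^1(\N)$ gives $\sum_t\alpha_t=\infty$. Since $F$ is $(\gamma,L)$-smooth and bounded below (proposition \ref{p:F_inherits_L} and assumption \ref{a:loss}) and $\alpha_t\in\ell^{1+\gamma}(\N)$, these are exactly the hypotheses of \cite[theorem 2]{orabona2020}, the only difference being that the cited statement is phrased under bounded variance of the stochastic gradient. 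Assumption \ref{a:ABC} plays that role: by \eqref{eq:ABC:grad_F} together with $\sup_t\E[F(w_t)-F_*]<\infty$ (proposition \ref{p:F_wt}(1)) one gets $\sum_t\alpha_t^{1+\gamma}\,\E\|\nabla\ell(Z_t,w_t)\|^{1+\gamma}<\infty$, and therefore the finite‑energy bound $\sum_t\|w_{t+1}-w_t\|^{1+\gamma}<\infty$ a.s.-$\P$, which is precisely what the crossing argument of \cite{orabona2020} uses in place of bounded variance (and the H\"older estimate \eqref{eq:gamma_smooth} replaces the classical descent lemma). Hence $\|\nabla F(w_t)\|\to0$ a.s.-$\P$, with no martingale convergence theorem invoked.

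For (2a), I would run the same scheme with $F(w_t)-F_*$ in the role of $\|\nabla F(w_t)\|^2$: convexity of $\ell(z,\cdot)$ and proposition \ref{p:F_wt}(3) give $\sum_t\alpha_t(F(w_t)-F_*)<\infty$ a.s.-$\P$ (again by Tonelli), the one‑step increments $F(w_{t+1})-F(w_t)$ are controlled via \eqref{eq:gamma_smooth} and the finite‑energy bound of the previous paragraph, and $\sum_t\alpha_t=\infty$; the crossing argument then forces $F(w_t)-F_*\to0$ a.s.-$\P$.

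For (2b), apply lemma \ref{l:weak_RS_rate} with $Y_t=F(w_t)-F_*$ and $Z_t=\tfrac{2LB}{1+\gamma}\|\nabla F(w_t)\|^2$. Re‑indexing \eqref{eq:p:F_wt:est} (so that $t-1\mapsto t$) and discarding the non‑positive term $-\alpha_t\|\nabla F(w_t)\|^2$ gives
\begin{equation*}
    \CondExp{Y_{t+1}}{\Fa_t}\;\le\;\Big(1+\tfrac{LA}{1+\gamma}\,\alpha_t^{1+\gamma}\Big)Y_t+\alpha_t^{1+\gamma}\Big(Z_t+\tfrac{L(C+2)}{1+\gamma}\Big),
\end{equation*}
and taking $\CondExp{\cdot}{\Fa_s}$ for any fixed $s\le t$ (tower property, with the deterministic factor $1+\tfrac{LA}{1+\gamma}\alpha_t^{1+\gamma}$ pulled out) yields hypothesis (1) of the lemma with $c_1=\tfrac{LA}{1+\gamma}$ and $c_2=\tfrac{L(C+2)}{1+\gamma}$. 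Hypothesis (2) holds since $\sum_t\alpha_t\,\E[Y_t]<\infty$ by proposition \ref{p:F_wt}(3) and $\sum_t\alpha_t\,\E[Z_t]=\tfrac{2LB}{1+\gamma}\sum_t\alpha_t\,\E\|\nabla F(w_t)\|^2<\infty$ by proposition \ref{p:F_wt}(1). With $\alpha_t=\Theta(t^{-p})$ and $p\in(\tfrac1{1+\gamma},1)$, lemma \ref{l:weak_RS_rate} then gives $t^{\min(1-p,\,p(1+\gamma)-1)-\eps}(F(w_t)-F_*)\to0$, i.e. $F(w_t)-F_*=o\big(t^{\max(p-1,\,1-(1+\gamma)p)+\eps}\big)$ a.s.-$\P$.

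The step I expect to be the main obstacle is (2a), and to a lesser extent (1): one must verify that the crossing/energy argument of \cite{orabona2020}, originally set up for a Lipschitz gradient and bounded variance, still closes when $\nabla F$ is only $\gamma$-H\"older and the noise is controlled solely through assumption \ref{a:ABC}. The point is that assumption \ref{a:ABC} bounds $\CondExp{\|\nabla\ell(Z_t,w_t)\|^{1+\gamma}}{\Fa_t}$ only by an affine function of $F(w_t)-F_*$ and $\|\nabla F(w_t)\|^{1+\gamma}$, so it is the a priori estimate $\sup_t\E[F(w_t)-F_*]<\infty$ from proposition \ref{p:F_wt}(1) that makes the stochastic gradient behave, along the trajectory, as if its variance were bounded; once $\sum_t\|w_{t+1}-w_t\|^{1+\gamma}<\infty$ a.s.-$\P$ is in hand, \eqref{eq:gamma_smooth} stands in for the descent lemma and the remainder of the argument is unchanged.
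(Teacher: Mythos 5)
Your plan for (1) and (2b) is the paper's plan, but the way you dispose of the noise in (1) has a genuine gap. The crossing lemma you are invoking (lemma \ref{l:orabona}, i.e.\ \cite[lemma 1]{orabona2020}) does not run on the finite-energy bound $\sum_t\|w_{t+1}-w_t\|^{1+\gamma}<\infty$: its second hypothesis is the a.s.\ convergence of the weighted noise series $\sum_t\alpha_t\,\delta m_t$, and it is this cancellation (norm of the sum, not sum of norms) that stands in for bounded variance. Finite energy only controls $\sum_{i\in I}\alpha_i\|\delta m_i\|$ over a crossing window $I$ through H\"older with a factor $|I|^{\gamma/(1+\gamma)}$, and the window lengths are not controlled, so the contradiction in the crossing argument does not close; your assertion that the energy bound is ``precisely what the crossing argument uses'' is where the proof breaks. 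The paper supplies exactly the missing ingredient: by proposition \ref{p:mt} (orthogonality of the martingale differences) and assumption \ref{a:ABC}, $\E\|\sum_i\alpha_i\,\delta m_i\|^2=\sum_i\alpha_i^2\,\E\|\delta m_i\|^2\leq\sum_i\alpha_i^2\left(A\,\E[F(w_i)-F_*]+B\,\E\|\nabla F(w_i)\|^2+C\right)<\infty$, using proposition \ref{p:F_wt}(1) and $\alpha_t\in\ell^{1+\gamma}\subset\ell^2$. The same issue propagates into your sketch of (2a): rerunning the crossing argument with $b_t=F(w_t)-F_*$ requires controlling $\sum_{i\in I}\alpha_i\InnerProd{\nabla F(w_i)}{\delta m_i}$ over crossing windows, again a martingale-series statement you have not established, and you also do not verify the increment hypothesis of lemma \ref{l:orabona} for this choice of $b_t$. (For comparison, the paper's (2a) goes a different route: for convex $F$ every critical point is a global minimum, and it deduces $F(w_t)\to F_*$ from $\|\nabla F(w_t)\|\to 0$ and continuity, rather than by a second crossing argument.)

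Part (2b) is correct and essentially the paper's argument: the same recursion obtained from \eqref{eq:p:F_wt:est} fed into lemma \ref{l:weak_RS_rate}. The only difference is that you keep $Z_t=\tfrac{2LB}{1+\gamma}\|\nabla F(w_t)\|^2$ and use proposition \ref{p:F_wt}(1) for $\sum_t\alpha_t\,\E[Z_t]<\infty$, whereas the paper absorbs the gradient term into $Y_t$ via lemma \ref{l:lsg_13} (convexity), taking $c_1=A+4BL$ and a constant in place of $Z_t$; both verify the hypotheses of lemma \ref{l:weak_RS_rate} and yield the stated rate.
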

\begin{proof}
    Following the same argument as in \cite[theorem 2]{orabona2020}, we get
    \begin{equation*}
        \begin{aligned}
            \bigg| \|\nabla F(w_t)\| - \|\nabla F(w_s)\| \bigg|^{1/\gamma} &\leq \|\nabla F(w_t) - \nabla F(w_s)\|^{1/\gamma}
            \leq
            L \sum_{i=s}^{t} \alpha_i \, \|\nabla F(w_i)\| + L \, \left\| \sum_{i=s}^{t} \alpha_i \, \delta m_i \right\|.
        \end{aligned}
    \end{equation*}
    Since proposition \ref{p:F_wt} asserts $\sum \alpha_t \, \|\nabla F(w_t)\|^2 < \infty$ almost surely, by lemma \ref{l:orabona}, it suffices to show that the second term on the right hand side has finite $L^2$ norm. Indeed, if this is the case, then $\sum_{i=s}^{t} \alpha_i \delta m_i < \infty$ almost surely. By proposition \ref{p:mt} and assumption \ref{a:ABC},
    \begin{equation*}
        \begin{aligned}
            \Exp \left\| \sum_{i=s}^{t} \alpha_i \, \delta m_i \right\|^2 = \sum_{i=s}^{t} \alpha_i^2 \, \Exp\|\delta m_i\|^2
            \leq
            \sum_{i=s}^{t} \alpha_i^2 \left( A \, \Exp[F(w_i) - F_*] + B \, \Exp\|\nabla F(w_i)\|^2 + C \right)
        \end{aligned}
    \end{equation*}
    The term on the right hand side is finite due to the first conclusion in proposition \ref{p:F_wt} and that $\alpha_t \in \ell^{1+\gamma} \subset \ell^2$. Therefore, the first claim follows.

    \textbf{Part 2.a.}
    For the convex case, we know that $F$ has no saddle points and all minima are global. Therefore, the set of critical points $\Cb = \{ w \in \Wb : \|\nabla F(w)\|^2 = 0 \}$ is the set of global minima. Therefore, $\|\nabla F(w_t)\|^2 \rightarrow 0$ implies $w_t \rightarrow w$ where $w \in \Cb$. By continuity of $F$, we get $F(w_t) \rightarrow F(w) = F_*$.

    \textbf{Part 2.b.}
    For a realization $\omega \in \Omega$, if $\tau(\omega) < \infty$, then $F(w_{\tau(\omega) \wedge t}(\omega)) = F_*$ for $t \geq \tau(\omega)$. The claim then holds trivially. If $\tau(\omega) = \infty$, then $F(w_{\tau(\omega) \wedge t}(\omega)) = F(w_t(\omega))$ for all $t > 0$. Defining $Y_t = F(w_t) - F_*$, applying $\CondExp{\cdot}{\Fa_s}$ on (\ref{eq:p:F_wt:est}), dropping the $-\alpha_t \, \|\nabla F(w_t)\|^2$ term, and applying lemma \ref{l:lsg_13} yields an inequality of the form
    \begin{equation*}
        \begin{aligned}
            \CondExp{Y_{t+1}}{\Fa_s} &\leq \left( 1+ c_0 \, \alpha_t^{1+\gamma} \right) \CondExp{Y_t}{\Fa_s} + c_0 \, \alpha_t^{1+\gamma} \bigg( c_1 \, \CondExp{Y_t}{\Fa_s} + c_2 \bigg)
            \\
            &= \left(1 + (c_0 + c_0 c_1) \alpha_t^{1+\gamma} \right) \, \CondExp{Y_t}{\Fa_s} + c_0 c_2 \, \alpha_t^{1+\gamma}
        \end{aligned}
    \end{equation*}
    where $c_0 = \frac{L}{1+\gamma}$, $c_1 = A + 4 BL$, and $c_2 = C + 2$. By proposition \ref{p:F_wt}, all conditions in lemma \ref{l:weak_RS_rate} are satisfied. Therefore, the claim holds.
\end{proof}

\subsection{Stochastic Heavy Ball}

The algorithm for SHB is given in (\ref{eq:shb}). From \cite[eq. (17) and (18)]{liuyuan2024}, by defining
\begin{equation}
    \begin{cases}
        v_{t} = w_{t} - w_{t-1} ~,~ z_{t} = w_{t} + \frac{\beta}{1-\beta} \,v_{t} ~~,~~ t \geq 1.
        \\
        w_1 = w_0.
    \end{cases}
    \label{eq:shb2}
\end{equation}
the SHB can be rewritten as one-step iterates
\begin{equation}
    \begin{cases}
        v_{t+1} = \beta\,v_t - \alpha_{t}\,\nabla\ell(Z_t, w_t) & t \geq 1.
        \\
        z_{t+1} = z_t - \ds\frac{\alpha_{t}}{1-\beta}\,\nabla\ell(Z_t, w_t) & t \geq 1.
        \\
        z_1 = w_1 ~,~ v_1 = 0.
    \end{cases}
    \label{eq:shb3}
\end{equation}

\begin{proposition}
    If assumption \ref{a:loss} and assumption \ref{a:ABC} holds, then there exists a constant $b_0 > 0$ such that if $\alpha_{t} \in \ell^{1+\gamma}$ and $\alpha_{t} \leq \min(1, b_0)$, then the SHB algorithm (\ref{eq:shb3}) corresponding to (\ref{eq:shb}) satisfies
    \begin{enumerate}
        \item $\sup_{t \geq 0} \Exp[F(z_t) - F_* + \|v_t\|^2] < \infty$ and $\sum_{t=1}^{\infty} \alpha_{t} \,\Exp\|\nabla F(z_t)\|^2 < \infty$.
        \item $\sum_{t=1}^{\infty} \Exp\|v_t\|^2 < \infty$ a.s.-$\Prob$.
        \item If $\alpha_t = O(t^{-p})$ for $p \in (\frac{1}{1+\gamma}, 1)$, then $\ds \min_{1 \leq s \leq t} \|\nabla F(w_s)\|^2 = o(t^{p-1})$ a.s.-$\Prob$.
        \item If $\ell(z, \cdot)$ is convex for all $z \in \Zb$, then $\sum_{t=1}^{\infty} \alpha_t  \left( \Exp[F(w_t) - F_*] + \Exp[F(z_t) - F_*] \right) < \infty$.
    \end{enumerate}
    \label{p:F_shb_wt}
\end{proposition}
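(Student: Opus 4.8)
The plan is to mirror the proof of Proposition \ref{p:F_wt} for SGD, but using the $z_t$-iterate formulation \eqref{eq:shb3} rather than $w_t$ directly, since $z_t$ follows a genuine one-step stochastic gradient recursion with step size $\alpha_t/(1-\beta)$. First I would apply the $(\gamma,L)$-smoothness inequality \eqref{eq:gamma_smooth} (via Proposition \ref{p:gamma_smooth}) to $F(z_{t+1})$ using $z_{t+1} - z_t = -\frac{\alpha_t}{1-\beta}\nabla\ell(Z_t, w_t)$. The subtlety compared to SGD is that the gradient evaluated is $\nabla\ell(Z_t, w_t)$, not $\nabla\ell(Z_t, z_t)$, so after taking $\CondExp{\cdot}{\Fa_t}$ the linear term becomes $-\frac{\alpha_t}{1-\beta}\InnerProd{\nabla F(z_t)}{\nabla F(w_t)}$, which is not a perfect square. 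I would handle the cross term by writing $\nabla F(z_t) = \nabla F(w_t) + (\nabla F(z_t) - \nabla F(w_t))$, using $\|z_t - w_t\| = \frac{\beta}{1-\beta}\|v_t\|$ together with $\gamma$-Hölder continuity of $\nabla F$ to bound $\|\nabla F(z_t) - \nabla F(w_t)\| \leq L(\frac{\beta}{1-\beta})^\gamma \|v_t\|^\gamma$, and then Young's inequality to absorb the resulting $\|v_t\|^{2\gamma}$ or $\|v_t\|^{1+\gamma}$ terms. This is essentially the computation already referenced in \eqref{eq:c:F_shb:1} and in \cite[eq. (17), (18)]{liuyuan2024}.

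Next I would derive the recursion for $\E\|v_{t+1}\|^2$. From $v_{t+1} = \beta v_t - \alpha_t \nabla\ell(Z_t, w_t)$, expanding the square and taking conditional expectation gives $\CondExp{\|v_{t+1}\|^2}{\Fa_t} = \beta^2\|v_t\|^2 - 2\beta\alpha_t\InnerProd{v_t}{\nabla F(w_t)} + \alpha_t^2\CondExp{\|\nabla\ell(Z_t,w_t)\|^2}{\Fa_t}$. Using $2\beta\alpha_t|\InnerProd{v_t}{\nabla F(w_t)}| \leq \beta\eta\|v_t\|^2 + \frac{\beta\alpha_t^2}{\eta}\|\nabla F(w_t)\|^2$ with $\eta$ chosen so that $\beta^2 + \beta\eta < 1$ (possible since $\beta < 1$), and bounding the last term via the ABC condition (Assumption \ref{a:ABC}) plus \eqref{eq:ABC:grad_F}, I get $\CondExp{\|v_{t+1}\|^2}{\Fa_t} \leq (\beta^2 + \beta\eta)\|v_t\|^2 + \alpha_t^2 (\text{const}\cdot(F(w_t)-F_*) + \text{const}\cdot\|\nabla F(w_t)\|^2 + \text{const})$. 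Now form the Lyapunov function $Y_t = \E[F(z_t) - F_* + c\|v_t\|^2]$ for a suitable constant $c > 0$ chosen to cancel the "bad" $\|v_t\|^2$ and $\|v_t\|^{2\gamma}$ terms coming from step one against the contraction factor $(\beta^2 + \beta\eta - 1)c < 0$ in the $\|v_t\|^2$ recursion; the point of \eqref{eq:gamma_smooth} is that all the error terms carry an extra $\alpha_t^\gamma$ or higher factor, so for $\alpha_t \leq \min(1,b_0)$ with $b_0$ small enough (this is where $b_0$ is defined) the $\|\nabla F(w_t)\|^2$ coefficient stays negative after combining. This yields a recursion of the form $Y_t \leq (1 + a_{t-1})Y_{t-1} - X_{t-1} + Z_{t-1}$ with $a_t = O(\alpha_t^{1+\gamma}) \in \ell^1$, $Z_t = O(\alpha_t^{1+\gamma}) \in \ell^1$, and $X_t \gtrsim \alpha_t\|\nabla F(w_t)\|^2 + \|v_t\|^2$; Proposition \ref{p:weak_RS} then gives $\sup_t Y_t < \infty$ and $\sum X_t < \infty$, which is claims (1) and (2). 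For claim (3), I note $\sum\alpha_t\|\nabla F(w_t)\|^2 < \infty$ almost surely (its expectation is finite), hence $\sum\alpha_t\min_{s\leq t}\|\nabla F(w_s)\|^2 < \infty$, and since $\min_{s\leq t}\|\nabla F(w_s)\|^2$ is nonincreasing, \cite[theorem 3.3.1]{knopp1956} with $\alpha_t = O(t^{-p})$ gives $t^{1-p}\min_{s\leq t}\|\nabla F(w_s)\|^2 \to 0$, exactly as in Proposition \ref{p:F_wt}. For claim (4), assuming $\ell(z,\cdot)$ convex, I would run the same $W_t = \E\|z_t - w_*\|^2$ argument as in the SGD proof: convexity gives $\InnerProd{\nabla F(z_t)}{z_t - w_*} \geq F(z_t) - F_*$, but since the recursion uses $\nabla\ell(Z_t,w_t)$ I again split via $\InnerProd{\nabla F(w_t)}{z_t - w_*} = \InnerProd{\nabla F(w_t)}{w_t - w_*} + \InnerProd{\nabla F(w_t)}{z_t - w_t} \geq (F(w_t) - F_*) - \frac{\beta}{1-\beta}\|\nabla F(w_t)\|\|v_t\|$, bounding the last cross term by Young against the already-summable $\sum\|v_t\|^2$ and $\sum\alpha_t\|\nabla F(w_t)\|^2$; combined with a similar treatment pulling in $F(z_t) - F_*$ and another application of Proposition \ref{p:weak_RS}, this gives $\sum\alpha_t(\E[F(w_t)-F_*] + \E[F(z_t)-F_*]) < \infty$.

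The main obstacle is the bookkeeping in step one and two: getting the Lyapunov constant $c$ and the step-size bound $b_0$ to work simultaneously, i.e. ensuring the combined coefficient of $\|\nabla F(w_t)\|^2$ in $X_t$ is bounded below by a positive multiple of $\alpha_t$ and the combined coefficient of $\|v_t\|^2$ is bounded below by a positive constant, after all the Young's-inequality splittings of the cross terms $\InnerProd{\nabla F(z_t) - \nabla F(w_t)}{\nabla F(w_t)}$ and $\InnerProd{v_t}{\nabla F(w_t)}$. The $\gamma < 1$ case is slightly more delicate than $\gamma = 1$ because the error term $\|v_t\|^{2\gamma}$ is not directly comparable to $\|v_t\|^2$ near zero, but since we will have already established $\|v_t\|$ bounded (from $\sup_t Y_t < \infty$, bootstrapped), one can use $\|v_t\|^{2\gamma} \leq \|v_t\|^2 + 1$ type estimates, or absorb it at the cost of an extra additive constant in $Z_t$, which is harmless. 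A secondary technical point is justifying the exchange of $\sum$ and $\E$ in claim (2) and in the Fubini-type steps, which follows from nonnegativity (Tonelli).
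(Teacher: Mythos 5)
Your proposal follows essentially the same route as the paper's proof: a one-step $(\gamma,L)$-smoothness estimate for $F(z_{t+1})$ along the recursion (\ref{eq:shb3}), a squared-norm recursion for $v_{t+1}$ with a Young split of the cross term, a Lyapunov combination $F(z_t)-F_*+c\|v_t\|^2$ fed into Proposition \ref{p:weak_RS}, Knopp's theorem for claim 3 exactly as in Proposition \ref{p:F_wt}, and an $\E\|z_t-w_*\|^2$ recursion with convexity for claim 4; the paper's choice $\beta^2(1+\e_1)+\e_2\le 1$ plays the role of your $c$ and $\eta$. The only real variations are cosmetic: you decompose the cross gradient term around $\nabla F(w_t)$ where the paper (following Liu--Yuan) decomposes around $\nabla F(z_t)$ and keeps $-b_2\,\alpha_t\|\nabla F(z_t)\|^2$ in (\ref{eq:p:F_shb_wt:F_est_final}); you extract claim 2 from the same Lyapunov inequality by retaining a strict contraction in the $\|v_t\|^2$ coefficient, while the paper derives it separately from (\ref{eq:p:F_shb_wt:vt}); and in claim 4 you control $\InnerProd{\nabla F(w_t)}{v_t}$ by Cauchy--Schwarz against the already summable $\sum_t\E\|v_t\|^2$, where the paper uses convexity plus a telescoping argument with decreasing $\alpha_t$.

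One step needs more care than you give it when $\gamma<1$. The remainder of your cross-term split in the $F(z_{t+1})$ estimate is of size $\mathrm{const}\cdot\alpha_t\|v_t\|^{2\gamma}$, i.e. it carries only a single power of $\alpha_t$. Your suggested fixes, $\|v_t\|^{2\gamma}\le\|v_t\|^2+1$ or ``$\|v_t\|$ already bounded'' (note that $\sup_t\E\|v_t\|^2<\infty$ only gives $\E\|v_t\|^{2\gamma}\le(\E\|v_t\|^2)^\gamma\lesssim 1$, not a pathwise bound), both leave an additive term of order $\alpha_t$ in $Z_t$, which need not be summable since only $\alpha_t\in\ell^{1+\gamma}$ is assumed. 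The correct absorption is a weighted Young inequality with exponents $1/\gamma$ and $1/(1-\gamma)$: $\alpha_t\|v_t\|^{2\gamma}\le\epsilon\,\|v_t\|^2+C_\epsilon\,\alpha_t^{1/(1-\gamma)}$, and since $\alpha_t\le 1$ and $1/(1-\gamma)\ge 1+\gamma$ the remainder is $O(\alpha_t^{1+\gamma})\in\ell^1$, while the $\epsilon\|v_t\|^2$ part is eaten by your contraction $c(1-\beta^2-\beta\eta)$. Finally, claim 1 asserts $\sum_t\alpha_t\,\E\|\nabla F(z_t)\|^2<\infty$ whereas your $X_t$ delivers $\sum_t\alpha_t\,\E\|\nabla F(w_t)\|^2<\infty$; this converts immediately via $\|\nabla F(z_t)\|^2\le 2\|\nabla F(w_t)\|^2+2L^2\left(\frac{\beta}{1-\beta}\right)^{2\gamma}\|v_t\|^{2\gamma}$ together with the same Young trick and claim 2, so it is bookkeeping rather than a gap, but it should be stated.
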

\begin{proof}
    \textbf{Proof of claim 1.} Following \cite[eq. (21)-(22)]{liuyuan2024} in the proof of \cite[theorem 8]{liuyuan2024} and using the simple estimate $a^{1+\gamma} \leq 2a^2 + 2$ as in (\ref{eq:ABC:grad_F}), we have
    \begin{align}
        \|\nabla F(w_t)\|^2 &\leq 2 \|\nabla F(z_t)\|^2 + c_1 \,\|v_t\|^2
        \label{eq:p:F_shb_wt:grad_F_wt_zt}
        \\
        F(w_t) - F_* &\leq F(z_t) - F_* + \frac{\|\nabla F(z_t)\|^2}{2} + \left\{ c_2 + c_3 \right\} \,\|v_t\|^{2} + c_4
        \label{eq:p:F_shb_wt:F_wt_zt}
    \end{align}
    where
    \begin{gather*}
        c_1 := \frac{2 L^2 \beta^2}{(1-\beta)^2}
        ~~,~~
        c_2 := \frac{\beta^2}{2(1-\beta)^2}
        ~~,~~
        c_3 := \frac{L \beta^{1+\gamma}}{(1+\gamma)(1-\beta)^{1+\gamma}} ~~,~~
        c_4 := 2(c_3 + 1).
    \end{gather*}
    Observe that the left hand sides of (\ref{eq:p:F_shb_wt:grad_F_wt_zt}) and (\ref{eq:p:F_shb_wt:F_wt_zt}) makeup the bounding terms in assumption \ref{a:ABC}. This implies that the upper bound of $\CondExp{\nabla \ell(Z_t, w_t)}{\Fa_t}$ can be expressed entirely in terms of SHB variables $F(z_t) - F_*$, $\|\nabla F(z_t)\|^2$, and $\|v_t\|^2$. As a consequence, we may estimate the iterates of the SHB variables as follows
    \begin{equation}
        \begin{aligned}
            \CondExp{\|v_{t+1}\|^2}{\Fa_{t}} &\leq \beta^2\,(1 + \e_1)\, \|v_t\|^2 + \frac{1}{\e_1}\,\alpha_t^2\, \|\nabla F(w_t)\|^2
            \\
            &+ \alpha_t^2 \left( A \,[F(w_t) - F_*] + B\, \|\nabla F(w_t)\|^2 + C \right)
            \\\\
            &\leq \left( \beta^2\,(1 + \e_1) + \left\{ \left( \frac{1}{\e_1} + B \right) c_1 + A \,c_2 + A\,c_3 \right\}\, \alpha_t^2 \right) \|v_t\|^2
            \\
            &+ \left( \frac{2}{\e_1} + 2 B + \frac{A}{2} \right)\, \alpha_t^2 \,\|\nabla F(z_t)\|^2 + \alpha_t^2 \,\bigg( C + A \, c_4 \bigg)
            \\
            &+ A \,\alpha_t^2 \,[F(z_t) - F_*]
        \end{aligned}
        \label{eq:p:F_shb_wt:v_est}
    \end{equation}
    where $\e_1$ is an arbitrary constant. In addition, following \cite[eq.(20)]{liuyuan2024}, we get
    \begin{equation}
        \begin{aligned}
            \CondExp{F(z_{t+1})}{\Fa_{t}} &\leq F(z_t) - \left( \frac{1}{1-\beta} - \frac{\alpha_{t} \,c_5}{\e_2} \right) \alpha_t \,\|\nabla F(z_t)\|^2 + \e_2 \|v_t\|^2
            \\
            &+ c_6 \, \alpha_t^{1+\gamma} \,(A[F(w_t) - F_*] + B\|\nabla F(w_t)\|^2 + C)
            \\\\
            &\leq F(z_t) - \left( \frac{1}{1-\beta} - \frac{\alpha_{t} \,c_5}{\e_2} - \left( B\, c_6 + \frac{A}{2} \right) \,\alpha_t^{\gamma} \right) \alpha_t \,\|\nabla F(z_t)\|^2
            \\
            &+ \bigg( \e_2 + (c_1\,c_6\,B + A \,c_2 + A\,c_3)\,\alpha_t^{1+\gamma} \bigg)\, \|v_t\|^2
            \\
            &+ A\,c_6\,\alpha_{t}^{1+\gamma}\,[F(z_t) - F_*] + ( C + A\,c_4 ) \,\alpha_t^{1+\gamma}
        \end{aligned}
        \label{eq:p:F_shb_wt:F_est}
    \end{equation}
    where $\e_2$ is an arbitrary constant and
    \begin{gather*}
        c_5 := \frac{c_1(L,\beta)}{8(1-\beta)}
        ~,~
        c_6 :=  \frac{c_3(L,\beta,\gamma)}{\beta^{1+\gamma}}
    \end{gather*}
    Since $\beta < 1$, we take $\e_1, \e_2 \in \R_+$ such that $\beta^2(1 + \e_1) + \e_2 \leq 1$, e.g. set $\e_1 = \frac{1-\beta^2}{2\beta^2}$ and $\e_2 = \frac{1-\beta^2}{2}$. Now, adding (\ref{eq:p:F_shb_wt:v_est}) and (\ref{eq:p:F_shb_wt:F_est}) yields that there exists positive constants $b_0, b_1, b_2, b_3 \in \R_+$ such that if $\alpha_t \leq \min(1, b_0)$, then
    \begin{equation}
        \begin{aligned}
            \CondExp{F(z_{t+1}) - F_* + \|v_{t+1}\|^2}{\Fa_{t}} &\leq (1 + b_1\, \alpha_{t}^{1+\gamma}) (F(z_t) - F_* + \|v_t\|^2)
            \\
            &- b_2\, \alpha_t \,\|\nabla F(z_t)\|^2 + b_3 \,\alpha_t^{1+\gamma}.
        \end{aligned}
        \label{eq:p:F_shb_wt:F_est_final}
    \end{equation}
    Applying the expectation on both sides and proposition \ref{p:weak_RS} yields the first claim.

    \textbf{Proof of claim 2 and 3.} Observe that the iterates of $v_t$ satisfies
    \begin{equation}
        \begin{aligned}
            \Exp\|v_{t+1}\|^2 &= \beta^2 \, \Exp\|v_t\|^2 - 2 \beta \, \alpha_t \, \Exp[\InnerProd{\nabla F(w_t)}{v_t}] + \alpha_t^2 \, \Exp\|\nabla \ell(Z, w_t)\|^2
            \\
            &\leq \Exp\|v_t\|^2 - 2 \beta \, \alpha_t \, \Exp[\InnerProd{\nabla F(w_t)}{v_t}] + \alpha_t^2 \, \Exp\|\nabla \ell(Z, w_t)\|^2.
        \end{aligned}
        \label{eq:p:F_shb_wt:vt}
    \end{equation}
    Using assumption \ref{a:ABC}, the first claim, along with $\ell^{1+\gamma} \subset \ell^2$, (\ref{eq:p:F_shb_wt:grad_F_wt_zt}), and (\ref{eq:p:F_shb_wt:F_wt_zt}) yields that the last term in (\ref{eq:p:F_shb_wt:vt}) is summable:
    \begin{equation*}
        \sum_{t=1}^{\infty} \alpha_t^2 \,\bigg( \Exp\|\nabla F(z_t)\|^2 + \sup_{t \geq 0}\, \Exp\left[F(z_t) - F_* + \|v_t\|^2\right] + 1 \bigg) < \infty.
    \end{equation*}
    Therefore, by proposition \ref{p:weak_RS}, the middle term in (\ref{eq:p:F_shb_wt:vt}) is also summable: $\sum \alpha_t \Exp[\InnerProd{\nabla F(w_t)}{v_t}] < \infty$. Now we have that $\Exp\|v_{t+1}\|^2 - \beta^2 \Exp\|v_t\|^2$ corresponds to summable terms. Since $v_1 = 0$, we know $\sum_{t=1}^{\infty} \|v_t\|^2 = \sum_{t=1}^{\infty} \|v_{t+1}\|^2$. Therefore,
    \begin{equation*}
        (1-\beta^2) \sum_{t=1}^{\infty} \Exp\|v_t\|^2 = -2 \beta \sum_{t=1}^{\infty} \alpha_t \, \Exp[\InnerProd{\nabla F(w_t)}{v_t}] + \sum_{t=1}^{\infty} \alpha_t^2 \, \Exp\|\nabla \ell(Z_t, w_t)\|^2 < \infty.
    \end{equation*}
    This proves $\sum \Exp\|v_t\|^2 < \infty$, which implies $\sum_{t=1}^{\infty} \|v_t\|^2 < \infty$ a.s.-$\Prob$. The third claim follows from (\ref{eq:p:F_shb_wt:grad_F_wt_zt}) using similar argument as in the proof of the second result in proposition \ref{p:F_wt}.

    \textbf{Proof of claim 4.} Define $U_t = \Exp\|z_t - w_*\|^2$ and $K_{ABC} = C + (A + 2BL) \sup_{t \geq 0} \Exp[F(w_t) - F_*]$. By the convexity of $F$,
    \begin{equation*}
        \begin{aligned}
            U_{t+1} &= U_t -2 \alpha_t \, \Exp[\InnerProd{\nabla F(w_t)}{z_t - w_*}] + \alpha_t^2 \, \Exp\|\nabla \ell(Z_t, w_t)\|^2
            \\
            &= U_t - 2 \alpha_t \left( \Exp[\InnerProd{\nabla F(w_t)}{w_t - w_*}] + \frac{\beta}{1-\beta} \Exp[\InnerProd{\nabla F(w_t)}{v_t}]\right) + \alpha_t^2 \Exp\|\nabla \ell(Z_t, w_t)\|^2
            \\
            &\leq U_t - 2 \, \alpha_t \, \Exp[F(w_t) - F_*] + \left( \frac{2\beta}{1-\beta} \alpha_t (F(w_{t-1}) - F(w_t)) + \alpha_t^2 \, K_{ABC} \right)
            \\
            &\leq U_t - 2 \, \alpha_t \, \Exp[F(w_t) - F_*] + \left( \frac{2\beta}{1-\beta} \bigg( \alpha_{t-1} \Exp[F(w_{t-1})] - \alpha_t \Exp[F(w_t)] \bigg) + \alpha_t^2 \, K_{ABC} \right).
        \end{aligned}
    \end{equation*}
    Note that we have used convexity of $F$ and that $\alpha_t$ is decreasing in the second inequality. Note that terms in the parentheses are summable. Applying proposition \ref{p:weak_RS} yields that $\sum \alpha_t \, \Exp[F(w_t) - F_*] < \infty$ for which the fourth claim follows. Lastly, $(\gamma, L)$-smoothness gives
    \begin{equation*}
        \begin{aligned}
            F(z_t) - F_* &\leq F(w_t) - F_* + \InnerProd{\nabla F(w_t)}{z_t - w_t} + \frac{L}{1+\gamma} \|z_t - w_t\|^{1+\gamma}
            \\
            &\leq F(w_t) - F_* + \frac{\|\nabla F(w_t)\|^2}{2} + \frac{\beta^2}{2(1-\beta)^2} \|v_t\|^2 + \frac{L}{1+\gamma} \left(\frac{\beta}{1-\beta}\right)^{1+\gamma} \|v_t\|^{1+\gamma}.
        \end{aligned}
    \end{equation*}
    Multiplying both sides by $\alpha_t$ yields that the right hand side is summable.
\end{proof}

\begin{remark}
    Proposition \ref{p:F_shb_wt} shows that the SHB converges at rate similar to that of SGD. In particular, the SHB parameter $\beta$ in (\ref{eq:shb}) does not impact the almost sure convergence rate, consistent with Liu and Yuan's result \cite[theorem 8]{liuyuan2024}.
\end{remark}

\begin{corollary}
    \label{c:F_shb}
    If all the assumptions in proposition \ref{p:F_shb_wt} hold and $\alpha_t \notin \ell^1(\N)$, then
    \begin{enumerate}
        \item $\|\nabla F(w_t)\| \rightarrow 0$ almost surely.
        \item If $\ell(z, \cdot)$ is convex for all $z \in \Zb$, then
        \begin{enumerate}
            \item $F(w_t) - F_* \rightarrow 0$ almost surely.
            \item Let $\tau$ be defined as in \cref{t:risk_o}. If $\alpha_t = \Theta(t^{-p})$ for $p \in (\frac{1}{1+\gamma}, 1)$, then almost surely
            \begin{equation*}
                F(w_{\tau \wedge t}) - F_* = o(t^{r_\gamma \max(p-1,1-(1+\gamma)p)+\eps})
                ~~\text{where}~~
                r_\gamma = \begin{cases}
                    \frac{2\gamma}{1+\gamma} & \beta \in (0, 1)
                    \\
                    1 & \beta = 0
                \end{cases}.
            \end{equation*}
        \end{enumerate}
    \end{enumerate}
\end{corollary}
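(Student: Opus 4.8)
The plan is to follow the proof of Corollary~\ref{cor:grad_F} line by line, substituting Proposition~\ref{p:F_shb_wt} for Proposition~\ref{p:F_wt} and carrying the auxiliary sequences $z_t,v_t$ of~(\ref{eq:shb3}) throughout. Since $\beta=0$ is the SGD case, already treated in Corollary~\ref{cor:grad_F}, assume $\beta\in(0,1)$. For claim~1, the $(\gamma,L)$-smoothness of $F$ gives, for $s\le t$, the Orabona-type estimate $\bigl|\|\nabla F(w_t)\|-\|\nabla F(w_s)\|\bigr|^{1/\gamma}\le L^{1/\gamma}\|w_t-w_s\|$. Writing $w_t=\tfrac{1}{1-\beta}z_t-\tfrac{\beta}{1-\beta}z_t$... more precisely $w_t=z_t-\tfrac{\beta}{1-\beta}v_t$, and telescoping the $z$-iteration $z_{i+1}-z_i=-\tfrac{\alpha_i}{1-\beta}\bigl(\nabla F(w_i)-\delta m_i\bigr)$, the right-hand side is bounded by $\tfrac{L^{1/\gamma}}{1-\beta}\bigl(\sum_{i=s}^{t-1}\alpha_i\|\nabla F(w_i)\|+\|\sum_{i=s}^{t-1}\alpha_i\,\delta m_i\|+\beta(\|v_t\|+\|v_s\|)\bigr)$, which is exactly the bound used in the SGD proof plus the extra term $\beta(\|v_t\|+\|v_s\|)$, and the latter tends to $0$ as $s,t\to\infty$ because $\sum\|v_t\|^2<\infty$ a.s.\ by Proposition~\ref{p:F_shb_wt}.

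It remains, for claim~1, to verify the hypotheses of Lemma~\ref{l:orabona}. First, $\sum\alpha_t\|\nabla F(w_t)\|^2<\infty$ a.s.: by~(\ref{eq:p:F_shb_wt:grad_F_wt_zt}) this is dominated by $\sum\alpha_t\|\nabla F(z_t)\|^2$ and $\sum\alpha_t\|v_t\|^2\le\alpha_0\sum\|v_t\|^2$, both a.s.\ finite by claims~1--2 of Proposition~\ref{p:F_shb_wt}. Second, $\sum_i\alpha_i\,\delta m_i$ converges a.s.; as in Corollary~\ref{cor:grad_F} one bounds $\E\|\sum_{i=s}^{t}\alpha_i\,\delta m_i\|^2=\sum_{i=s}^{t}\alpha_i^2\,\E\|\delta m_i\|^2$ using Proposition~\ref{p:mt} and assumption~\ref{a:ABC}, together with $\sup_t\E[F(w_t)-F_*]<\infty$ (which follows from~(\ref{eq:p:F_shb_wt:F_wt_zt}), claim~1, and Lemma~\ref{l:lsg_13}), $\sum\alpha_t\,\E\|\nabla F(w_t)\|^2<\infty$, and $\alpha_t\in\ell^{1+\gamma}\subset\ell^2$. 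Since $\alpha_t\notin\ell^1(\N)$, Lemma~\ref{l:orabona} then yields $\|\nabla F(w_t)\|\to0$ a.s.

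For claim~2(a), convexity of $\ell(z,\cdot)$ makes $F$ convex, hence without non-global critical points, so $\|\nabla F(w_t)\|\to0$ forces $F(w_t)\to F_*$ exactly as in Corollary~\ref{cor:grad_F}. For claim~2(b) I would first establish the rate for the Lyapunov quantity $Y_t:=F(z_t)-F_*+\|v_t\|^2$ and then transfer it. Applying $\CondExp{\cdot}{\Fa_s}$ to~(\ref{eq:p:F_shb_wt:F_est_final}) and discarding the nonpositive term $-b_2\alpha_t\|\nabla F(z_t)\|^2$ gives, for every fixed $s$ and all $t\ge s$, $\CondExp{Y_{t+1}}{\Fa_s}\le(1+b_1\alpha_t^{1+\gamma})\CondExp{Y_t}{\Fa_s}+b_3\alpha_t^{1+\gamma}$, which is condition~1 of Lemma~\ref{l:weak_RS_rate} with $Z_t\equiv0$; condition~2 holds because $\sum\alpha_t\,\E[F(z_t)-F_*]<\infty$ (claim~4) and $\sum\alpha_t\,\E\|v_t\|^2\le\alpha_0\sum\E\|v_t\|^2<\infty$ (claim~2). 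Hence $Y_t=o(t^{\max(p-1,1-(1+\gamma)p)+\eps})$ a.s.\ for every $\eps>0$. Applying~(\ref{eq:gamma_smooth}) at $x=z_t$, $y=w_t$ (with $w_t-z_t=-\tfrac{\beta}{1-\beta}v_t$ and Young's inequality on the cross term) and then Lemma~\ref{l:lsg_13} to replace $\|\nabla F(z_t)\|^2$ by $(F(z_t)-F_*)^{2\gamma/(1+\gamma)}$ gives $F(w_t)-F_*\lesssim(F(z_t)-F_*)+(F(z_t)-F_*)^{2\gamma/(1+\gamma)}+\|v_t\|^2+\|v_t\|^{1+\gamma}$. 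With $a:=\max(p-1,1-(1+\gamma)p)+\eps<0$ the four summands are $o(t^{a})$, $o(t^{2\gamma a/(1+\gamma)})$, $o(t^{a})$, $o(t^{(1+\gamma)a/2})$; since $\tfrac{2\gamma}{1+\gamma}\le\tfrac{1+\gamma}{2}\le1$ (the first inequality being $(1-\gamma)^2\ge0$) and $a<0$, the slowest is $o(t^{2\gamma a/(1+\gamma)})$, so $F(w_t)-F_*=o(t^{\frac{2\gamma}{1+\gamma}(\max(p-1,1-(1+\gamma)p)+\eps)})$, which after rescaling $\eps$ is the claimed $o(t^{r_\gamma\max(p-1,1-(1+\gamma)p)+\eps})$ with $r_\gamma=\tfrac{2\gamma}{1+\gamma}$; the case $\beta=0$ ($r_\gamma=1$) is Corollary~\ref{cor:grad_F}.

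The step I expect to be the main obstacle is claim~2(a): deducing $F(w_t)\to F_*$ from $\|\nabla F(w_t)\|\to0$ without compactness or uniqueness of the minimizer, the same soft point as in Corollary~\ref{cor:grad_F}. I would back up the bare topological statement by noting that $\|w_{t+1}-w_t\|=\|v_{t+1}\|\to0$ forces the increments $F(w_{t+1})-F(w_t)\to0$ (via~(\ref{eq:gamma_smooth}) and $\|\nabla F(w_t)\|\to0$), so $F(w_t)-F_*$ cannot oscillate between $0$ and a strictly positive value while remaining a.s.\ summable against $\alpha_t$ with $\sum\alpha_t=\infty$ (using $\sum\alpha_t\,\E[F(w_t)-F_*]<\infty$ from claim~4). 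A secondary, purely bookkeeping difficulty is keeping the constants $b_i$ of~(\ref{eq:p:F_shb_wt:F_est_final}), the smoothness constants, and the exponent arithmetic consistent when moving rates among $w_t$, $z_t$, and $v_t$.
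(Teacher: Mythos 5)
Your proposal is correct and follows the paper's own route: claims 1 and 2(a) by rerunning the Corollary~\ref{cor:grad_F} argument through (\ref{eq:p:F_shb_wt:grad_F_wt_zt})--(\ref{eq:p:F_shb_wt:F_wt_zt}) and Proposition~\ref{p:F_shb_wt}, and claim 2(b) by applying Lemma~\ref{l:weak_RS_rate} to $Y_t=F(z_t)-F_*+\|v_t\|^2$ via (\ref{eq:p:F_shb_wt:F_est_final}) and then transferring to $F(w_t)-F_*$ with $(\gamma,L)$-smoothness and Lemma~\ref{l:lsg_13}, which is exactly the paper's estimate (\ref{eq:c:F_shb:1}) and the same exponent bookkeeping giving $r_\gamma=\tfrac{2\gamma}{1+\gamma}$. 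Your write-up is simply more explicit than the paper's terse proof (the Orabona adaptation with the extra $\|v_t\|$ terms and the backup discussion of 2(a)), with no substantive deviation.
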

\begin{proof}
    The same argument as in corollary \ref{cor:grad_F} can be used. For a realization $\omega \in \Omega$, if $\tau(\omega) < \infty$, then $F(w_{\tau(\omega) \wedge t}(\omega)) = F_*$ for all $t \geq \tau(\omega)$ and the claim holds trivially. Otherwise, if $\tau(\omega) = \infty$, then $F(w_{\tau(\omega) \wedge t}(\omega)) = F(w_t(\omega))$ so that the argument in the next paragraph can be used.

    Both claims follow from (\ref{eq:p:F_shb_wt:grad_F_wt_zt}) and (\ref{eq:p:F_shb_wt:F_wt_zt}) along with the first and second conclusion in proposition \ref{p:F_shb_wt}. Let $Y_t = F(z_t) - F_* + \|v_t\|^2$. Applying $\CondExp{\cdot}{\Fa_s}$ on (\ref{eq:p:F_shb_wt:F_est_final}) and dropping the $-\alpha_t \|\nabla F(z_t)\|^2$ term yields
    \begin{equation*}
        \CondExp{Y_{t+1}}{\Fa_s} \leq (1 + b_1 \, \alpha_t^{1+\gamma}) \, \CondExp{Y_t}{\Fa_s} + b_3 \, \alpha_t^{1+\gamma}.
    \end{equation*}
    Proposition \ref{p:F_shb_wt} shows that all conditions in lemma \ref{l:weak_RS_rate} holds. Estimating $F(w_t) - F_*$ using $(\gamma, L)$-smoothness and applying lemma \ref{l:lsg_13} yields
    \begin{equation}
        \begin{aligned}
            F(w_t) - F_* &\leq F(z_t) - F_* + \frac{\beta}{1-\beta}\InnerProd{\nabla F(z_t)}{v_t} + \frac{L}{1+\gamma} \left( \frac{\beta}{1-\beta} \|v_t\| \right)^{1+\gamma}
            \\
            &\leq F(z_t) - F_* + \frac{\beta}{1-\beta} \left( \frac{\|\nabla F(z_t)\|^2 + \|v_t\|^2}{2} \right) + \frac{L}{1+\gamma} \left( \frac{\beta}{1-\beta} \|v_t\| \right)^{1+\gamma}
            \\
            &\leq F(z_t) - F_* + \frac{\beta}{1-\beta} \left( \frac{L^{1/\gamma}}{1+\gamma} \cdot (F(z_t) - F_*)^{\frac{2\gamma}{1+\gamma}} + \frac{\|v_t\|^2}{2} \right) + \frac{L}{1+\gamma} \left( \frac{\beta}{1-\beta} \|v_t\| \right)^{1+\gamma}.
        \end{aligned}
        \label{eq:c:F_shb:1}
    \end{equation}
    Applying lemma \ref{l:weak_RS_rate} yields $t^r (F(w_t) - F_*) \rightarrow 0$ if
    \begin{equation*}
        r = \begin{cases}
            \min(\frac{1+\gamma}{2}, \frac{2\gamma}{1+\gamma}) \cdot \min(1-p,p(1+\gamma)-1) & \beta \in (0, 1)
            \\
            \min(1-p,p(1+\gamma)-1) & \beta = 0.
        \end{cases}
    \end{equation*}
    This completes the proof.
\end{proof}

\section{Convergence Rate with High Probability for Convex Objective}

We will assume throughout this section that $\ell(z, \cdot)$ is convex for all $z \in \Zb$, which by proposition \ref{p:F_inherits_L}, makes $F$ also convex. It will be convenient to define once and for all the following constants and notations. Let $a_1$ and $a_2$ be the constants defined in lemma \ref{l:lsg_sb_ell}. Also, define the following
\begin{gather}
    \begin{aligned}
        k_0 &:= \frac{\beta}{1-\beta}
        ~~,~~
        k_1 := \frac{a_1(L, \gamma)}{(1-\beta)^2}
        ~~,~~
        k_2 := \frac{a_2(L, \gamma)}{(1-\beta)^2}
        ~~,~~
        k_3 := \frac{2}{1-\beta} \cdot \sup_{z \in \Zb} \ell(z, w_*).
        \\
        k_4 &:= k_0 + \beta \,k_0^2
        ~~,~~
        k_5 := k_2 + k_3 + a_2\,k_0^2.
        \\
        K_0 &:= \min \left( 1, \frac{1}{(a_1 \, k_0^2 + a_1 \, k_0 + a_1 + k_1)(1-\beta)} \right).
        \\
        \Delta z_t &= \|z_t - w_*\|^2 - \|z_{t-1} - w_*\|^2
        ~~,~~
        \Delta v_t = \|v_t\|^2 - \|v_{t-1}\|^2.
    \end{aligned}
    \label{eq:const}
\end{gather}

\subsection{Distance from a Minimizer with High Probability}

\begin{lemma}
    Let $z_t,w_t,v_t$ be the SHB iterates as in (\ref{eq:shb}) and (\ref{eq:shb3}). With the notation in (\ref{eq:const}), if $\ell(z, \cdot)$ is convex for all $z \in \Zb$, assumption \ref{a:loss} and assumption \ref{a:ell_cond} both hold, and $\alpha_t$ satisfies
    \begin{enumerate}
        \item $\alpha_0 \leq K_0$.
        \item $\alpha_{t+1} \leq \alpha_{t}$ for all $t \geq 0$.
        \item $\alpha_t \in \ell^2(\N)$.
    \end{enumerate}
    then there exists constants $K_1(L,\beta,\gamma,w_0) > 0$ and $K_2(L,\beta,\gamma,w_0) > 0$ such that for all $t > 0$,
    \begin{enumerate}
        \item $\ds \max \left\{ \|w_t - w_*\|^2, \|v_t\|^2, \|z_t\|^2 \right\} \leq K_1\, \sum_{s=0}^{t-1} \alpha_s$ a.s.-$\Prob$.
        \item $\sum_{s=1}^{t-1} \alpha_s^2\,\ell(Z_s, w_s) \leq K_2$ a.s.-$\Prob$.
    \end{enumerate}
    \label{l:conc_1}
\end{lemma}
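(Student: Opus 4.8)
The plan is to argue pathwise (the conclusions are a.s.-$\P$ bounds with \emph{deterministic} constants, so no conditional expectations are needed) and to reduce both assertions to a single one-step Lyapunov recursion which is then summed. First I would expand squared norms along the one-step form (\ref{eq:shb3}). Writing $g_t=\nabla\ell(Z_t,w_t)$, $\ell_t=\ell(Z_t,w_t)$ and using $z_t-w_*=(w_t-w_*)+k_0v_t$,
\[
\|z_{t+1}-w_*\|^2=\|z_t-w_*\|^2-\tfrac{2\alpha_t}{1-\beta}\langle g_t,w_t-w_*\rangle-\tfrac{2\alpha_t k_0}{1-\beta}\langle g_t,v_t\rangle+\tfrac{\alpha_t^2}{(1-\beta)^2}\|g_t\|^2 ,
\]
and $\|v_{t+1}\|^2=\beta^2\|v_t\|^2-2\beta\alpha_t\langle g_t,v_t\rangle+\alpha_t^2\|g_t\|^2$. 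Three structural facts then enter: (i) convexity of $\ell(Z_t,\cdot)$ gives $\langle g_t,w_t-w_*\rangle\ge\ell_t-\ell(Z_t,w_*)$, so together with assumption \ref{a:ell_cond} the first inner product is bounded by $-\tfrac{2\alpha_t}{1-\beta}\ell_t+k_3\alpha_t$; (ii) the self-bounding estimate of lemma \ref{l:lsg_sb_ell} controls $\|g_t\|^2$ by an affine function $a_1\ell_t+a_2$ of $\ell_t$ (equivalently by a multiple of $\|w_t-w_*\|^{2\gamma}+1\le 2\|z_t-w_*\|^2+2k_0^2\|v_t\|^2+1$); (iii) Young's inequality applied to $\langle\alpha_t g_t,v_t\rangle$, splitting it into a $\|v_t\|^2$-piece carrying a small \emph{constant} coefficient plus an $\alpha_t^2\|g_t\|^2$-piece.

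Second, I would set $\Phi_t=\|z_t-w_*\|^2+\lambda\|v_t\|^2$ and choose $\lambda>0$ together with the Young split parameters so that, after combining the two displays, the coefficient of $\|v_t\|^2$ in $\Phi_{t+1}$ is $\le\lambda$ — this uses only the slack $\beta^2<1$ (for instance, the $v$-recursion already contracts with factor $\tfrac{1+\beta^2}{2}<1$ once its cross term is absorbed, leaving room for the $\|v_t\|^2$-piece coming from the $z$-recursion) — while the coefficient of $\|z_t-w_*\|^2$ stays $\le 1$ (or $\le 1+O(\alpha_t^2)$ if one uses the $\|w_t-w_*\|^{2\gamma}$ form of lemma \ref{l:lsg_sb_ell}). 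Then I would take $\alpha_0\le K_0$ small enough that the $\ell_t$-contribution from the $\alpha_t^2\|g_t\|^2$ terms is dominated by the $-\tfrac{2\alpha_t}{1-\beta}\ell_t$ produced by convexity; tracking the constants shows this is exactly the threshold $\alpha_0\le K_0$ with $K_0$ as in (\ref{eq:const}). The outcome is the clean recursion
\[
\Phi_{t+1}\le\Phi_t-\tfrac{\alpha_t}{1-\beta}\,\ell_t+k_3\,\alpha_t+C\,\alpha_t^2\qquad(t\ge1),
\]
for a constant $C=C(L,\beta,\gamma)$ (in the $\|w_t-w_*\|^{2\gamma}$ variant one first gets an extra factor $1+C'\alpha_t^2$ in front of $\Phi_t$, which is harmless since $\prod_t(1+C'\alpha_t^2)\le\exp\!\big(C'\sum_t\alpha_t^2\big)<\infty$).

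Third, for claim 1 I would sum the recursion from $s=1$ to $t-1$: since $\Phi_s,\ell_s\ge0$ and $\sum_s\alpha_s^2<\infty$, this gives $\Phi_t\le\Phi_1+k_3\sum_{s=0}^{t-1}\alpha_s+C\sum_s\alpha_s^2$, and because $\sum_{s=0}^{t-1}\alpha_s\ge\alpha_0>0$ the additive constants are absorbed into $\Phi_t\le K_1'\sum_{s=0}^{t-1}\alpha_s$; then $\|v_t\|^2\le\lambda^{-1}\Phi_t$, $\|w_t-w_*\|^2=\|(z_t-w_*)-k_0v_t\|^2\le(2+2k_0^2\lambda^{-1})\Phi_t$, and $\|z_t\|^2\le2\Phi_t+2\|w_*\|^2$ yield claim 1 (note $\Phi_1=\|w_0-w_*\|^2$ since $v_1=0$ and $z_1=w_1=w_0$, which explains the dependence of $K_1$ on $w_0$). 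For claim 2 I would rewrite the recursion as $\tfrac{\alpha_s}{1-\beta}\ell_s\le\Phi_s-\Phi_{s+1}+k_3\alpha_s+C\alpha_s^2$, multiply through by $\alpha_s$, and sum; the one non-obvious point is $\sum_{s=1}^{t-1}\alpha_s(\Phi_s-\Phi_{s+1})\le\alpha_1\Phi_1$, which follows by Abel summation precisely because $(\alpha_s)$ is non-increasing (the arising $\sum(\alpha_{s}-\alpha_{s+1})(\Phi_1-\Phi_{s+1})$ is bounded by $\Phi_1\sum(\alpha_s-\alpha_{s+1})\le\alpha_1\Phi_1$), while $\sum_s\alpha_s^3\le\alpha_0\sum_s\alpha_s^2<\infty$. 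This gives $\sum_{s=1}^{t-1}\alpha_s^2\ell(Z_s,w_s)\le K_2$ uniformly in $t$.

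The main obstacle is the bookkeeping in the second step: pinning down $\lambda$ and the two Young parameters so that \emph{simultaneously} the $\|v_t\|^2$-coefficient does not exceed $\lambda$ and the net $\ell_t$-coefficient becomes non-positive, with both regulated by the single scalar condition $\alpha_0\le K_0$ (and seeing that the constant one needs really is the $K_0$ of (\ref{eq:const})). Everything after that is the two elementary Abel-summation identities and the absorption of additive constants using $\sum_{s=0}^{t-1}\alpha_s\ge\alpha_0$.
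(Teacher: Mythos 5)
Your argument is correct, and it reaches both conclusions by a genuinely different mechanism than the paper at the one place where the proof has real content, namely the cross term $\langle \nabla\ell(Z_t,w_t), v_t\rangle$ that appears in both the $z$- and the $v$-recursion. You absorb it \emph{per step} with Young's inequality into a weighted Lyapunov function $\Phi_t=\|z_t-w_*\|^2+\lambda\|v_t\|^2$, tuning $\lambda$ and the Young parameters against the slack $\beta^2<1$, which yields a clean one-step descent inequality $\Phi_{t+1}\le\Phi_t-\tfrac{\alpha_t}{1-\beta}\ell(Z_t,w_t)+k_3\alpha_t+C\alpha_t^2$; claim 1 is then a direct summation and claim 2 an Abel summation using monotonicity of $\alpha_t$. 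The paper instead never splits this cross term: it adds $k_0^2$ times the $v$-recursion to the $z$-recursion, sums first (keeping the cross sum intact in (\ref{eq:l:conc_1:zv})), and then cancels the accumulated sum exactly by re-expanding $\|w_t-w_*\|^2=\|z_t-w_*\|^2+k_0^2\|v_t\|^2-2k_0\langle z_t-w_*,v_t\rangle$ and re-using the summed $v$-recursion, with Young's inequality appearing only at the very end against $\langle w_0-w_*,v_t\rangle$; for claim 2 it forms $\Delta z_{t+1}-\Delta v^\beta_{t+1}$, multiplies by $\alpha_t$ and telescopes $\alpha_t\|z_t\|^2-\alpha_{t+1}\|z_{t+1}\|^2$ using monotonicity, which plays the role of your Abel step. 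Your route buys a reusable one-step Lyapunov inequality and avoids the somewhat delicate after-the-fact cancellation; the paper's route keeps the constants in the explicit form (\ref{eq:const}). One small caveat: with your Young/$\lambda$ choices the admissible smallness threshold for $\alpha_0$ is an explicit constant of the same type as, but not literally equal to, the $K_0$ of (\ref{eq:const}), so you should either present it as a constant $K_0'(L,\beta,\gamma)$ (redefining $K_0$ for downstream use, e.g.\ in theorem \ref{t:risk_hp}) or not claim exact agreement; likewise your $K_1,K_2$ pick up dependence on $\alpha_0$ and $\sum_t\alpha_t^2$ through the absorption steps, but this mirrors what the paper's own constants implicitly do (you can remove the $\ell^2$-norm dependence in $K_1$ by using $\alpha_s^2\le\alpha_s$, since $\alpha_s\le\alpha_0\le 1$). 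Neither point affects the validity of the statement.
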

\begin{proof}
    Consider (\ref{eq:shb3}) for the iterates of $v_t$. Take the squared-norm. Applying lemma \ref{l:lsg_sb_ell}.\ref{i:l:lsg_sb_ell:nabla_ell_ell} yields
    \begin{equation}
        \begin{aligned}
            \|v_{t+1}\|^2 - \|v_t\|^2 &\leq \|v_{t+1}\|^2 - \beta^2\,\|v_t\|^2
            \\
            &= -2\,\alpha_t\,\beta\,\InnerProd{\nabla \ell(Z_t, w_t)}{v_t} + \alpha_t^2 \,\|\nabla \ell(Z_t, w_t)\|^2
            \\
            &\leq -2\,\alpha_t \, \beta \, \InnerProd{\nabla \ell(Z_t, w_t)}{v_t}  + a_1\,\alpha_t^2 \,\ell(Z_t, w_t) + a_2\,\alpha_t^2
        \end{aligned}
        \label{eq:l:conc_1:dv}
    \end{equation}
    Using the notation in (\ref{eq:const}), convexity and lemma \ref{l:lsg_sb_ell}.\ref{i:l:lsg_sb_ell:nabla_ell_ell} yields
    \begin{equation}
        \begin{aligned}
            \Delta z_{t+1} &= -2\,\alpha_t\, \InnerProd{\nabla\ell(Z_t, w_t)}{\frac{ \beta v_t + w_t - w_*}{1-\beta} } + \frac{1}{(1-\beta)^2} \,\alpha_t^2 \,\|\nabla \ell(Z_t, w_t)\|^2
            \\
            &\leq -2\,\alpha_t\,\InnerProd{\nabla \ell(Z_t, w_t)}{\frac{\beta v_t}{1-\beta}} + \frac{2}{1-\beta} \,\alpha_t \bigg( \ell(Z_t, w_*) - \ell(Z_t, w_t)\bigg) + k_1\,\alpha_t^2 \,\ell(Z_t, w_t) + k_2\,\alpha_t^2
            \\
            &\leq -2\,\alpha_t\,\InnerProd{\nabla \ell(Z_t, w_t)}{\frac{\beta v_t}{1-\beta}} + \alpha_t \,(k_3 + k_2\,\alpha_t) - \alpha_t\,\ell(Z_t, w_t) \bigg( \frac{2}{1-\beta} - \alpha_t \,k_1 \bigg).
        \end{aligned}
        \label{eq:l:conc_1:dz}
    \end{equation}
    Now, let $A_t = \sum_{s=1}^{t-1} \alpha_s$. Note that $v_1 = 0$ and $z_1 = w_0$. Multiplying (\ref{eq:l:conc_1:dv}) by $k_0^2$, adding (\ref{eq:l:conc_1:dz}), and summing from $s=1$ to $s=t-1$ yields
    \begin{equation}
        \begin{aligned}
            \|z_t - w_*\|^2 + k_0^2\, \| v_t \|^2 &\leq
            \|w_0 - w_*\|^2  + k_5\, A_t
            \\
            &- 2\beta \left(\frac{1}{1-\beta} + k_0^2 \right) \sum_{s=1}^{t-1} \alpha_s\,\InnerProd{\nabla \ell(Z_s, w_s)}{v_s}
            \\
            &- \sum_{s=1}^{t-1} \alpha_s\,\ell(Z_s, w_s) \bigg( \frac{2}{1-\beta} - \alpha_s \,(k_1 + a_1 k_0^2) \bigg).
        \end{aligned}
        \label{eq:l:conc_1:zv}
    \end{equation}
    Let $X_t$ be the last term in (\ref{eq:l:conc_1:zv}) and $Y_t = 2 k_0 \InnerProd{w_0 - w_*}{v_t}$. Using (\ref{eq:l:conc_1:zv}) and $z_1 = w_0$, we get the estimate
    \begin{equation}
        \begin{aligned}
            \|w_t &- w_*\|^2 = \|z_t - w_*\|^2 + k_0^2\, \| v_t \|^2 - 2 k_0 \, \InnerProd{z_t - w_*}{v_t}
            \\
            &= \|z_t - w_*\|^2 + k_0^2\, \| v_t \|^2 - 2 k_0 \InnerProd{z_1 + w_*}{v_t} - 2 k_0 \sum_{s=1}^{t-1} \InnerProd{ z_{s+1} - z_s }{v_s}
            \\
            &= \|z_t - w_*\|^2 + k_0^2\, \| v_t \|^2 - 2 k_0 \InnerProd{w_0 - w_*}{v_t} + \frac{2 k_0}{1-\beta} \sum_{s=1}^{t-1} \alpha_s \,\InnerProd{ \nabla\ell(Z_s, w_s) }{v_s}
            \\
            &\leq \|w_0 - w_*\|^2 + k_5\,A_t + X_t - Y_t + 2 \, \beta \, k_0 \sum_{s=1}^{t-1} \alpha_s\,\InnerProd{ \nabla\ell(Z_s, w_s) }{v_s}.
        \end{aligned}
        \label{eq:l:conc_1:wt}
    \end{equation}
    Now, recalling $v_1 = 0$ and rearranging (\ref{eq:l:conc_1:dv}) and summing from $s=1$ to $s=t-1$ yields
    \begin{equation*}
        \begin{aligned}
            2\beta\,\sum_{s=1}^{t-1} \alpha_s \InnerProd{\nabla\ell(Z_s,w_s)}{v_s} &\leq - \|v_t\|^2 + a_1\, \sum_{s=1}^{t-1} \alpha_s^2 \,\ell(Z_s, w_s) + a_2  \sum_{s=1}^{t-1} \alpha_s^2.
        \end{aligned}
    \end{equation*}
    Combining this with (\ref{eq:l:conc_1:wt}) yields that the middle term $a_1 \sum \alpha_s^2 \ell(Z_s,w_s)$ can be collected with $X_t$ so that
    \begin{equation}
        \begin{aligned}
            \|w_t - w_*\|^2 &\leq \|w_0 - w_*\|^2 + k_5 \,A_t - Y_t - 2 k_0 \|v_t\|^2 - k_0 a_2  \sum_{s=1}^{t-1} \alpha_s^2
            \\
            &- \sum_{s=1}^{t-1} \alpha_s\,\ell(Z_s, w_s) \bigg( \frac{2}{1-\beta} - \alpha_s \,(k_1 + a_1 k_0^2 + a_1 k_0) \bigg).
        \end{aligned}
        \label{eq:l:conc_1:wt2}
    \end{equation}
    Requiring $\alpha_t$ to be small enough, we may drop the last two terms on the right hand side of (\ref{eq:l:conc_1:wt2}). Now, using the inequality $2|\InnerProd{w_0 - w_*}{v_t}| \leq \e \|w_0 - w_*\|^2 + \frac{1}{\e}\|v_t\|^2$ and setting $\e = 1/2$ yields
    \begin{equation}
        \begin{aligned}
            \|w_t - w_*\|^2 &\leq \|w_0 - w_*\|^2 + k_5 \,A_t - Y_t - 2 k_0 \|v_t\|^2
            \\
            &\leq \|w_0 - w_*\|^2 + k_5 \,A_t + 2 k_0 |\InnerProd{w_0 - w_*}{v_t}| - 2 k_0 \|v_t\|^2
            \\
            &\leq \|w_0 - w_*\|^2 + k_5 \,A_t +  \frac{k_0}{2} \|w_0 - w_*\|^2.
        \end{aligned}
        \label{eq:l:conc_1:wt3}
    \end{equation}
    Since we define $\alpha_0 > 0$, we have $\|w_t - w_*\|^2 \lesssim \sum_{s=0}^{t-1} \alpha_s$ as desired. Since $v_t = w_t - w_{t-1}$, we know that $\|v_t\|^2$ has the same order as $\|w_t - w_*\|^2$. Similarly, since $z_t = w_t + k_0 \,v_t$, it follows that $\|z_t\|^2$ also has the same order as $\|w_t - w_*\|^2$.

    Now, define $\Delta v_{t+1}^{\beta} = \beta^2 \|v_t\|^2 - \|v_{t+1}\|^2$. Observe that
    \begin{equation*}
        \begin{aligned}
            \Delta z_{t+1} - \Delta v_{t+1}^{\beta} &\leq (k_3 + k_2 \,\alpha_t) \alpha_t - \alpha_t\,\ell(Z_t,w_t)\left(\frac{2}{1-\beta} - \alpha_t \,k_1 \right) - \alpha_t^2 \|\nabla\ell(Z_t,w_t)\|^2
            \\
            &\leq k_5\, \alpha_t - \alpha_t\,\ell(Z_t,w_t)\left(\frac{2}{1-\beta} - \alpha_t \,k_1 \right) + \alpha_t^2 \|\nabla\ell(Z_t,w_t)\|^2
            \\
            &\leq (k_5 + a_2 \alpha_t)\, \alpha_t - \alpha_t\,\ell(Z_t,w_t)\left(\frac{2}{1-\beta} - \alpha_t \,(k_1 + a_1) \right).
        \end{aligned}
    \end{equation*}
    If $\alpha_t$ is small enough and is decreasing, rearranging the above by moving the last term to the left hand side and multiplying by $\alpha_t$ yields
    \begin{equation}
        \begin{aligned}
            \left( \frac{1}{1-\beta} \right) \alpha_t^2 \,\ell(Z_t,w_t) &\leq (k_5 + a_2\,\alpha_t)\,\alpha_t^2 - \Delta z_{t+1} + \Delta v_{t+1}^{\beta}
            \\
            &= (k_3 + a_2 \,\alpha_t)\,\alpha_t^2 - \alpha_t \Delta z_{t+1} + \alpha_t \beta^2 \|v_t\|^2 - \alpha_t \|v_{t+1}\|^2
            \\
            &\leq (k_3 + a_2)\,\alpha_t^2 + \alpha_t \|z_t\|^2 - \alpha_{t+1} \|z_{t+1}\|^2 + \alpha_t \|v_t\|^2 - \alpha_{t+1}\|v_{t+1}\|^2.
        \end{aligned}
        \label{eq:l:conc_1:a_loss}
    \end{equation}
    Since $\alpha_t \in \ell^2$, summing the above and making use of the first claim completes the proof.
\end{proof}

\begin{remark}
    Note that the $\beta$-parameter does not affect the long term fluctuation of the iterates as can be seen from the conclusion of lemma \ref{l:conc_1} and from setting $\beta = 0$ directly in estimates (\ref{eq:l:conc_1:wt3}) and (\ref{eq:l:conc_1:a_loss}). As such, lemma \ref{l:conc_1} holds if $w_t$ is the SGD iterates.
\end{remark}


In the sequel, we will require estimating constants of the form $\alpha_t \sum_{s=1}^{t} \alpha_s$. Under the assumption $\alpha_t \in \ell^{1+\gamma}(\N)$ for $\gamma \in (0, 1]$, we have $\alpha_t \sum_{s=0}^{t-1} \alpha_s \leq \alpha_t^{1-\gamma} \sum_{s=0}^{t-1} \alpha_s^{1+\gamma} < \infty$. Moreover, such a sum converges to zero for $\gamma \in (0, 1)$. If $\gamma = 1$, $\alpha_0 = K_0$, $\alpha_t \leq K_0 \, t^{-p}$ for $t > 0$, then
\begin{equation}
    \alpha_t \sum_{s=0}^{t-1} \alpha_s \leq K_0^2 \, t^{-p} \left( 1 + \sum_{s=1}^{t-1} s^{-p} \right) \lesssim K_0^2 \, \max(t^{-p} , (t-1)^{-2p+1}) \leq K_0^2 \leq K_0 ~~~,~~~ \forall t > 0~~~.
    \label{eq:stepsize}
\end{equation}

In addition, from here onwards, we will estimate probabilities using Azuma-Hoeffding and Bernstein's inequality following the work by Lei, Shi, Guo in \cite{lei2018}.

\begin{proposition}
    With the notation in lemma \ref{l:conc_1}, if all assumptions in lemma \ref{l:conc_1} holds and in addition $\alpha_t \leq K_0 t^{-p}$ for $p \in (\frac{1}{2}, 1)$, then there exists positive constants $K_3, K_4, K_5 \in \R_+$ depending only on $L,\gamma,\beta$ such that for any $t > 0$ and $\delta \in (0, 1)$,
    \begin{equation*}
        \Prob \left( \|z_{t} - w_*\|^2 + \left( \frac{\beta}{1-\beta} \right)^2\,\|v_{t}\|^2 \leq K_3 + K_4 \log \frac{1}{\delta} + K_5 \sum_{s=1}^{t-1} \alpha_s^2\,\|w_s - w_*\|^2 \right) \geq 1 - \delta.
    \end{equation*}
    \label{p:wt_hp}
\end{proposition}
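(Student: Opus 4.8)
\emph{Setup.} The plan is to rerun the computation that produced (\ref{eq:l:conc_1:zv}) in lemma \ref{l:conc_1}, but this time keeping the stochastic inner products as martingale-difference sums and controlling those sums with the Bernstein and Azuma--Hoeffding inequalities, instead of discarding them crudely via $\sup_z\ell(z,w_*)$. Concretely: expand $\|z_{t+1}-w_*\|^2$ from (\ref{eq:shb3}) and telescope; expand $\|v_{t+1}\|^2$ and use $\|v_{s+1}\|^2-\beta^2\|v_s\|^2\ge\|v_{s+1}\|^2-\|v_s\|^2$ to bound $k_0^2\|v_t\|^2$ by the $\beta^2$-discounted telescoping sum; then add, so that $\Psi_t:=\|z_t-w_*\|^2+k_0^2\|v_t\|^2$ is bounded by $\|w_0-w_*\|^2$ plus a sum over $s<t$ of terms in $\alpha_s\InnerProd{\nabla\ell(Z_s,w_s)}{w_s-w_*}$, $\alpha_s\InnerProd{\nabla\ell(Z_s,w_s)}{v_s}$ and $\alpha_s^2\|\nabla\ell(Z_s,w_s)\|^2$.

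\emph{Signal and second-order terms.} The second-order terms are summable: by lemma \ref{l:lsg_sb_ell}.\ref{i:l:lsg_sb_ell:nabla_ell_ell}, $\alpha_s^2\|\nabla\ell(Z_s,w_s)\|^2\le a_1\alpha_s^2\ell(Z_s,w_s)+a_2\alpha_s^2$, and lemma \ref{l:conc_1} bounds $\sum_s\alpha_s^2\ell(Z_s,w_s)$ by the constant $K_2$ while $\alpha_t\in\ell^2$, so these contribute a deterministic constant. For the linear terms I would use convexity twice. Convexity of $\ell(Z_s,\cdot)$ at $w_s$ gives $-\InnerProd{\nabla\ell(Z_s,w_s)}{w_s-w_*}\le\ell(Z_s,w_*)-\ell(Z_s,w_s)$, and convexity at $w_*$ gives $\ell(Z_s,w_s)\ge\ell(Z_s,w_*)+\InnerProd{\nabla\ell(Z_s,w_*)}{w_s-w_*}$; combining, $-\InnerProd{\nabla\ell(Z_s,w_s)}{w_s-w_*}\le-\InnerProd{\nabla\ell(Z_s,w_*)}{w_s-w_*}$. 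This trade is the crucial point, because $\nabla\ell(Z_s,w_*)$ is \emph{bounded} (by assumption \ref{a:ell_cond} together with lemma \ref{l:lsg_sb_ell}.\ref{i:l:lsg_sb_ell:nabla_ell_ell}) and $\E[\nabla\ell(Z_s,w_*)\mid\Fa_s]=\nabla F(w_*)=0$. For the cross terms I would likewise use convexity of $\ell(Z_s,\cdot)$ between $w_{s-1}$ and $w_s$ to replace $-\InnerProd{\nabla\ell(Z_s,w_s)}{v_s}$ by $\ell(Z_s,w_{s-1})-\ell(Z_s,w_s)$, which, after subtracting its conditional mean, splits into $F(w_{s-1})-F(w_s)$ (Abel-summable to a constant since $\alpha_s\downarrow$ and $F\ge0$) plus a martingale difference. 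After these reductions $\Psi_t\le C_0+M_t$, where $C_0$ is a deterministic constant depending only on $L,\gamma,\beta,w_0$ and $M_t=\sum_{s<t}\xi_s$ is an $\Fa_t$-martingale whose increments are, up to $\beta$-dependent constants, $\alpha_s\InnerProd{\nabla\ell(Z_s,w_*)}{w_s-w_*}$, $\alpha_s(\ell(Z_s,w_{s-1})-F(w_{s-1}))$ and $\alpha_s(\ell(Z_s,w_s)-F(w_s))$.

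\emph{Concentration.} I would then bound $M_t$ with probability $\ge1-\delta$ by Bernstein's inequality (Azuma--Hoeffding suffices where only a crude increment bound is needed). The preceding step was designed so that the dominant increment $\alpha_s\InnerProd{\nabla\ell(Z_s,w_*)}{w_s-w_*}$ has conditional second moment $\lesssim\alpha_s^2\|w_s-w_*\|^2\,\E[\|\nabla\ell(Z_s,w_*)\|^2\mid\Fa_s]\lesssim\alpha_s^2\|w_s-w_*\|^2$, so its contribution to the predictable quadratic variation is exactly a multiple of $V_t':=\sum_{s<t}\alpha_s^2\|w_s-w_*\|^2$; the terms in $\ell(Z_s,w)-F(w)$ carry an extra factor $\alpha_s$ (and $\|v_s\|$ is of order $\alpha_s$ times the iterate magnitude), so, using the a.s. envelopes of lemma \ref{l:conc_1} on $\|w_s-w_*\|^2$ and $\|v_s\|^2$, they contribute only a constant to the variation. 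Bernstein then gives $M_t\le\sqrt{2V_t\log(1/\delta)}+\tfrac13 b_t\log(1/\delta)$; applying $\sqrt{2V_t\log(1/\delta)}\le K_5 V_t'+\tfrac{1}{2K_5}\log(1/\delta)$ yields the claimed inequality, with $K_3$ collecting $C_0$ and the deterministic remainders and $K_4$ the logarithmic terms.

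\emph{Main obstacle.} The delicate part is exactly this last step: a priori $M_t$ does \emph{not} have bounded increments — they grow with $\|w_s-w_*\|$, for which only the slowly growing a.s. envelope $\|w_s-w_*\|^2\le K_1\sum_{r<s}\alpha_r$ of lemma \ref{l:conc_1} is available — so Bernstein/Azuma cannot be invoked off the shelf. One must either restrict to the a.s. event of lemma \ref{l:conc_1} or introduce a stopping time at the envelope level, and then verify that the increment-bound term $b_t\log(1/\delta)$ and the residual powers of $\|w_s-w_*\|$ appearing in $V_t$ are routed cleanly into the single feedback term $K_5 V_t'$ (which is itself allowed to grow and will be unrolled by a discrete Gr\"onwall argument in the sequel) without corrupting the purely logarithmic dependence on $\delta$. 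The two-fold use of convexity to trade $\nabla\ell(Z_s,w_s)$ for the bounded $\nabla\ell(Z_s,w_*)$, and the smallness of $\|v_s\|$, are precisely what keep the variation proportional to $\sum_{s<t}\alpha_s^2\|w_s-w_*\|^2$ rather than to a higher power, and getting that bookkeeping right is where the real work lies.
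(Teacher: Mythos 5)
Your skeleton coincides with the paper's up to the treatment of the first-order terms: the paper likewise telescopes (\ref{eq:l:conc_1:dv})--(\ref{eq:l:conc_1:dz}), disposes of the quadratic terms through lemma \ref{l:conc_1} (giving (\ref{eq:p:wt_hp:phi})), Abel-sums the momentum drift, and closes with Bernstein plus a Young-type split producing $K_4\log(1/\delta)+K_5\sum_{s}\alpha_s^2\|w_s-w_*\|^2$. Where you genuinely diverge is the noise decomposition. The paper writes $\nabla\ell(Z_s,w_s)=\nabla F(w_s)-\delta m_s$ in (\ref{eq:p:wt_hp:dz_dv}), \emph{keeps} the negative drift $-\tfrac{2\alpha_s}{1-\beta}(F(w_s)-F_*)$, bounds $\CondExp{\xi_s^2}{\Fa_s}\le c(1+\alpha_s(F(w_s)-F_*)+\alpha_s^2\|w_s-w_*\|^2)$ via lemma \ref{l:lsg_sb_ell}.\ref{i:l:lsg_sb_ell:dmt_F} and (\ref{eq:stepsize}), and then picks $q$ in Bernstein so small that the $\sum_s\alpha_s(F(w_s)-F_*)$ part of the variance is absorbed by that retained drift. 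Your two-fold convexity trade replacing $-\InnerProd{\nabla\ell(Z_s,w_s)}{w_s-w_*}$ by the centered increment $-\InnerProd{\nabla\ell(Z_s,w_*)}{w_s-w_*}$ is a different device and, for that term, sound: the multiplier $\nabla\ell(Z_s,w_*)$ is bounded, the variance is exactly $O(\alpha_s^2\|w_s-w_*\|^2)$, and the increments are bounded a.s. by the lemma \ref{l:conc_1} envelope together with $p>\tfrac12$ (so your worry about stopping times is moot — the paper needs none either, for the same reason); note, however, that it uses $\nabla F(w_*)=0$, which the paper's argument never requires.

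The genuine gap is in the cross (momentum) terms. After your convexity-and-centering step, the martingale increments $\alpha_s\big(\ell(Z_s,w_{s-1})-F(w_{s-1})\big)$ and $\alpha_s\big(\ell(Z_s,w_s)-F(w_s)\big)$ have conditional second moment of order $\alpha_s^2\big(1+\|w_s-w_*\|^{2(1+\gamma)}\big)$ (smoothness and assumption \ref{a:ell_cond} give $\ell(z,w)\lesssim 1+\|w-w_*\|^{1+\gamma}$), and with only the lemma \ref{l:conc_1} envelope $\|w_s-w_*\|^2\le K_1\sum_{r<s}\alpha_r$ the total variation behaves like $\sum_s s^{-2p+(1+\gamma)(1-p)}$, which for $\gamma=1$ diverges for every $p\le\tfrac34$; nor is it dominated by $K_5\sum_s\alpha_s^2\|w_s-w_*\|^2$ with a constant $K_5$. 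Having traded away the negative drift, you have nothing left to absorb this with, whereas the paper's absorption works precisely because lemma \ref{l:lsg_sb_ell}.\ref{i:l:lsg_sb_ell:dmt_F} bounds the noise variance by the same quantity $F(w_s)-F_*$ that appears in the drift. So the assertion that these terms ``contribute only a constant to the variation'' is unproven and, at the level of estimates you invoke, false. To repair your route you would need the finer bound $\|v_s\|\lesssim\alpha_s\big(1+\max_{r\le s}\|w_r-w_*\|^{\gamma}\big)$ (unrolling the $v$-recursion with a step-size ratio argument of the kind the paper only develops later, in proposition \ref{p:sum_vt}) and to treat the cross increment as the difference $\alpha_s\big[(\ell(Z_s,w_{s-1})-\ell(Z_s,w_s))-(F(w_{s-1})-F(w_s))\big]$ — exactly the bookkeeping you flag as ``where the real work lies'' but do not carry out.
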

\begin{proof}
    We use the notation of constants in (\ref{eq:const}). Using the convexity of $F$, summing (\ref{eq:l:conc_1:dv}) and (\ref{eq:l:conc_1:dz}) yields
    \begin{equation}
        \begin{aligned}
            \Delta z_{t+1} + k_0^2 \,\Delta v_{t+1} &\leq -2 \,\alpha_t \,\beta \left( \frac{1}{1-\beta} + k_0^2 \right) \InnerProd{\nabla \ell(Z_t, w_t)}{v_t} -  \frac{2\alpha_t}{1-\beta} \InnerProd{\nabla \ell(Z_t, w_t)}{w_t - w_*} + \varphi_t
            \\
            &= -2 \,\alpha_t \, k_4\, \InnerProd{\nabla F(w_t)}{w_t - w_{t-1}} - \frac{2\alpha_t}{1-\beta} \, \InnerProd{\nabla F(w_t)}{w_t - w_*} + \frac{2 \xi_t}{1-\beta} + \varphi_t
            \\
            &\leq 2 \, k_4 \, \alpha_t \big( F(w_{t-1}) - F(w_t) \big) + \frac{2\alpha_t}{1-\beta} \big( F(w_*) - F(w_t) \big) + \frac{2 \xi_t}{1-\beta} + \varphi_t
            \\
            &\leq 2 \, k_4 \, \big( \alpha_{t-1} F(w_{t-1}) - \alpha_t F(w_t) \big) + \frac{2}{1-\beta} \left( \alpha_t \, \big( F(w_*) - F(w_t) \big) + \xi_t \right) + \varphi_t
        \end{aligned}
        \label{eq:p:wt_hp:dz_dv}
    \end{equation}
    where $\xi_t$ and $\varphi_t$ are defined as $\Fa_t$-adapted processes
    \begin{equation}
        \begin{aligned}
            \xi_t &= (1-\beta) k_4 \alpha_t \InnerProd{\delta m_t}{v_t} + \alpha_t \InnerProd{\delta m_t}{w_t - w_*}.
            \\
            \varphi_t &= \alpha_t^2 \left( (a_1 k_0^2 + k_1) \,\ell(Z_t, w_t) + a_2 k_0^2 + k_2 \right).
        \end{aligned}
        \label{eq:p:wt_hp:xi_phi}
    \end{equation}
    By lemma \ref{l:conc_1}, we have
    \begin{equation}
        \sum_{t=1}^{\infty} \varphi_t \leq (a_1 k_0^2 + k_1) K_2 + (a_2 k_0^2 + k_2) < \infty ~~~\text{a.s.-}\Prob.
        \label{eq:p:wt_hp:phi}
    \end{equation}
    Also, we have $\CondExp{\xi_t}{\Fa_t} = 0$. We now show that we may bound $\sum_{k=1}^{t} \xi_k - \CondExp{\xi_k}{\Fa_{k}}$ using \cite[lemma 21]{lei2018}. By lemma \ref{l:lsg_sb_ell}.\ref{i:l:lsg_sb_ell:dmt}, lemma \ref{l:conc_1}, and (\ref{eq:stepsize}),
    \begin{equation}
        \begin{aligned}
            \xi_t - \CondExp{\xi_t}{\Fa_{t}} = \xi_t &\leq \alpha_t \|\delta m_t\| \left( k_4\, \|v_t\| + \|w_t - w_*\| \right)
            \\
            &\leq \alpha_t \sqrt{ 6 L^2 \|w_t - w_*\|^{\gamma} + a_4 } \left( k_4 + 1 \right) \sqrt{\sum_{s=0}^{t-1} \alpha_s}
            \\
            &\leq \left( k_4 + 1 \right) \,\left( 6 L^2 \, \sqrt{ \alpha_t^{1-\gamma} \left( \alpha_t \sum_{s=0}^{t-1} \alpha_s\right)^{\gamma + 1} }
            +
            a_4 \,\sqrt{\alpha_t^2 \sum_{s=0}^{t-1} \alpha_s} \right)
            \\
            &\leq \left( k_4 + 1 \right) \left( 6 L^2 \, \sqrt{K_0^{1+\gamma}} + a_4 \sqrt{K_0} \right).
        \end{aligned}
        \label{eq:p:wt_hp:mgdiff}
    \end{equation}
    As for the conditional variance, we apply Cauchy-Schwarz, lemma \ref{l:lsg_sb_ell}.\ref{i:l:lsg_sb_ell:dmt_F}, and lemma \ref{l:conc_1} to get
    \begin{equation*}
        \begin{aligned}
            \CondExp{ \xi_t^2 }{ \Fa_t } &\leq 2 \alpha_t^2 \left( k_4^2 \,\CondExp{ \InnerProd{\delta m_t}{v_t}^2 }{ \Fa_t } + \CondExp{ \InnerProd{\delta m_t}{w_t - w_*}^2 }{ \Fa_t } \right)
            \\
            &\leq 2 \left( k_4^2 + 1 \right) \alpha_t^2 \,\CondExp{\|\delta m_t\|^2}{\Fa_t} \bigg( \|v_t\|^2 + \|w_t - w_*\|^2 \bigg)
            \\
            &\leq 2 \left( k_4^2 + 1 \right) \alpha_t^2 \bigg( a_6 \, (F(w_t) - F_*) + a_7 \bigg) \left( \left( \sum_{s=0}^{t-1} \alpha_s \right) + \|w_t - w_*\|^2 \right).
        \end{aligned}
    \end{equation*}
    Recall, by lemma \ref{l:conc_1}, we get $\alpha_t \|w_t - w_*\|^2 \leq \alpha_t \sum_{s=0}^{t-1} \alpha_s \leq K_0$. Let $k_{11} = 2 \left( k_4^2 + 1 \right)$. There exists $c > 0$ such that
    \begin{equation}
        \begin{aligned}
            \CondExp{ \xi_t^2 }{ \Fa_t }
            &\leq
            2 k_{11} \, a_6 \, K_0 \, \alpha_t \, (F(w_t) - F_*) + k_{11} \, K_0 \, a_7 + k_{11} \, a_7 \, \alpha_t^2 \, \|w_t - w_* \|^2
            \\
            &\leq \underbrace{ k_{11} \max(2 K_0 \, a_6 \, a_7, K_0 a_7, a_7) }_{c} \left( 1 + \alpha_t \, (F(w_t) - F_*) + \alpha_t^2 \, \|w_t - w_* \|^2 \right).
        \end{aligned}
        \label{eq:p:wt_hp:xisq}
    \end{equation}
    Let $b > 0$ be the right hand side of (\ref{eq:p:wt_hp:mgdiff}). Applying \cite[lemma 21.b]{lei2018}, we get for a choice of tolerance level $\delta \in (0, 1)$ and all $q > 0$, the following holds with probability at least $1 - \delta$:
    \begin{equation}
        \begin{aligned}
            \sum_{s=1}^{t-1} \xi_s &\leq \frac{(e^{q} - q - 1)}{q} \cdot \frac{c}{b} \left( 1 + \sum_{s=1}^{t-1} \alpha_s \,(F(w_s) - F_*) + \sum_{s=1}^{t-1} \alpha_s^2 \, \|w_s - w_*\|^2 \right) + \frac{b}{q} \cdot \frac{\log(1/\delta)}{q}.
        \end{aligned}
        \label{eq:p:wt_hp:xi}
    \end{equation}

    Now, we want to `remove' the $\alpha_t \,(F(w_*) - F(w_t))$ term in (\ref{eq:p:wt_hp:dz_dv}) so that the upper bound of $\Delta z_{t+1} + k_0^2 \Delta v_{t+1}$ is of the order $O( 1 + \sum_{k=0}^{t-1} \alpha_k^2 \|w_k - w_*\|^2 )$. To do so, we choose $q > 0$ such that
    \begin{equation}
        \frac{e^q - q - 1}{q} \cdot \frac{c}{b} < 1
        \label{eq:p:wt_hp:k5}
    \end{equation}
    The above inequality is solvable since $(e^q - q - 1)/q \rightarrow 0$ as $q \rightarrow 0$. Moreover, $q$ is independent of $t$ and $\alpha_t$ since $b$ and $c$ are both independent of $t$ and $\alpha_t$. Therefore, all constants on the right hand side of (\ref{eq:p:wt_hp:xi}) are independent of $t$ and $\alpha_t$. Since $F(w_t) > F_*$, the term $\alpha_t (F_* - F(w_t))$ in (\ref{eq:p:wt_hp:dz_dv}) dominates over that in (\ref{eq:p:wt_hp:xi}). Using the estimate in (\ref{eq:p:wt_hp:xi}) and summing (\ref{eq:p:wt_hp:dz_dv}) from $1$ to $t-1$ yields the desired result.
\end{proof}

\begin{remark}
    Observe that the constant $K_5$ is independent of the step sizes. This can be seen from (\ref{eq:p:wt_hp:k5}). The constants $b$ and $c$ on the right hand side of (\ref{eq:p:wt_hp:mgdiff}) and (\ref{eq:p:wt_hp:xisq}) respectively are independent of the step sizes $\alpha_t$.
\end{remark}

\begin{proposition}
    With the notation in proposition \ref{p:wt_hp}, if all assumptions in lemma \ref{p:wt_hp} holds and $\alpha_0^2 \leq \frac{1}{4 K_5}$, then for any $\delta \in (0, 1)$ there exists a constant $K_6(L,\gamma,\beta) > 0$ such that for any $T > 0$
    \begin{equation*}
        \Prob \left( \max_{1 \leq t \leq T} \left\{ \|w_t - w_*\|^2, \|z_t - w_*\|^2, \|v_t\|^2 \right\} \leq K_6 \log\left( \frac{T}{\delta} \right) \right) \geq 1 - \delta.
    \end{equation*}
    \label{p:wt_max}
\end{proposition}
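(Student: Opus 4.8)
The plan is to run a union bound over the times $t\in\{1,\dots,T\}$ on top of the pointwise estimate of Proposition~\ref{p:wt_hp}, and then to turn the resulting (recursive) inequality into a uniform-in-$t$ bound via the discrete Gronwall inequality, exploiting that $p>\tfrac12$ forces $\alpha_t\in\ell^2(\N)$.

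\textbf{Step 1: union bound.} First I would apply Proposition~\ref{p:wt_hp} at each time $1\le t\le T$ with tolerance $\delta/T$ in place of $\delta$. Since $\sum_{t=1}^{T}\delta/T=\delta$, there is an event $\Omega_{T,\delta}$ with $\P(\Omega_{T,\delta})\ge 1-\delta$ on which, \emph{simultaneously for all} $1\le t\le T$,
\begin{equation*}
\|z_t-w_*\|^2+k_0^2\,\|v_t\|^2\ \le\ K_3+K_4\log\frac{T}{\delta}+K_5\sum_{s=1}^{t-1}\alpha_s^2\,\|w_s-w_*\|^2,
\end{equation*}
where $k_0=\beta/(1-\beta)$ as in (\ref{eq:const}). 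This is where the $\log T$ factor in the statement enters.

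\textbf{Step 2: close the recursion by Gronwall.} Write $b_t:=\|z_t-w_*\|^2+k_0^2\|v_t\|^2$ and $D_{T,\delta}:=K_3+K_4\log(T/\delta)$. Since $w_s=z_s-k_0 v_s$, one has $\|w_s-w_*\|^2\le 2\|z_s-w_*\|^2+2k_0^2\|v_s\|^2=2b_s$, so on $\Omega_{T,\delta}$,
\begin{equation*}
b_t\ \le\ D_{T,\delta}+2K_5\sum_{s=1}^{t-1}\alpha_s^2\,b_s,\qquad 1\le t\le T.
\end{equation*}
The discrete Gronwall inequality then gives $b_t\le D_{T,\delta}\prod_{s=1}^{t-1}(1+2K_5\alpha_s^2)\le D_{T,\delta}\exp\!\big(2K_5\sum_{s\ge0}\alpha_s^2\big)=:C_0\,D_{T,\delta}$, where $C_0<\infty$ because $p>\tfrac12$ makes $\sum_s\alpha_s^2\le K_0^2\sum_s s^{-2p}$ finite (the hypothesis $\alpha_0^2\le\tfrac1{4K_5}$ additionally keeps each Gronwall factor $\le 3/2$ and hence the constant $C_0$ explicit). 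Thus $b_t\le C_0 D_{T,\delta}$ for all $1\le t\le T$ on $\Omega_{T,\delta}$. It is precisely this feedback step that upgrades the crude deterministic growth $\|w_t-w_*\|^2\lesssim\sum_{s<t}\alpha_s$ of Lemma~\ref{l:conc_1} (which would only yield a polynomial-in-$T$ bound) to the desired $O(\log(T/\delta))$; and it is legitimate only because Step~1 makes all the pointwise inequalities hold on one common event.

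\textbf{Step 3: read off the three quantities.} On $\Omega_{T,\delta}$ and for $1\le t\le T$ we immediately get $\|z_t-w_*\|^2\le b_t$ and $\|w_t-w_*\|^2\le 2b_t$. For $\|v_t\|^2$, when $\beta\in(0,1)$ we have $\|v_t\|^2\le b_t/k_0^2$, while for $\beta=0$ one uses $z_t=w_t$ and $v_t=w_t-w_{t-1}$ to get $\|v_t\|^2\le 2\|w_t-w_*\|^2+2\|w_{t-1}-w_*\|^2\le 4C_0 D_{T,\delta}$. Taking $K_6$ large enough (depending only on $L,\gamma,\beta$ and $\alpha_0$) and absorbing the additive constant $K_3$ into $K_6\log(T/\delta)$ then gives $\max_{1\le t\le T}\{\|w_t-w_*\|^2,\|z_t-w_*\|^2,\|v_t\|^2\}\le K_6\log(T/\delta)$ on $\Omega_{T,\delta}$, which is the claim. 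The only genuine obstacle is Step~2 — recognizing that the $K_5\sum_{s<t}\alpha_s^2\|w_s-w_*\|^2$ term must be handled by Gronwall rather than by a direct bound — and everything else is bookkeeping.
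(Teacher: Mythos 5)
Your argument is correct, and your Steps 1 and 3 match what the paper does (the paper's proof also tacitly applies Proposition \ref{p:wt_hp} with tolerance $\delta/T$ at each $t\le T$ to get the common event carrying the $\log(T/\delta)$ factor; you just make this union bound explicit). Where you genuinely diverge is the key Step 2. The paper does \emph{not} close the recursion by Gronwall: it fixes the burn-in time $t_1=\min\{t\in[1,T]: K_5\sum_{s=t}^T\alpha_s^2\le\frac14\}$ (this is where the hypothesis $\alpha_0^2\le\frac{1}{4K_5}$ is actually used), splits $K_5\sum_{s<t}\alpha_s^2\|w_s-w_*\|^2$ at $t_1$, bounds the head deterministically via Lemma \ref{l:conc_1} ($\|w_s-w_*\|^2\lesssim\sum_{k\le s}\alpha_k$, so $\alpha_s^2\sum_k\alpha_k\le K_0\alpha_s$), bounds the tail by $\frac14\max_{1\le t\le T}\|w_t-w_*\|^2$, and then absorbs that quarter of the maximum into the left-hand side (a self-bounding argument, following Lei--Shi--Guo). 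Your route instead feeds $\|w_s-w_*\|^2\le 2b_s$ back into the recursion and applies the discrete Gronwall inequality (Proposition \ref{p:gronwall}), using only $\sum_s\alpha_s^2<\infty$; this makes Lemma \ref{l:conc_1} and the burn-in time $t_1$ entirely unnecessary for this proposition, and it does not really need $\alpha_0^2\le\frac{1}{4K_5}$ at all. The trade-offs are mild: your constant is $\exp\bigl(2K_5\sum_s\alpha_s^2\bigr)$ rather than the paper's additive $K_3+K_5K_0\sum_{s\le t_1}\alpha_s$, and in Step 3 your bound $\|v_t\|^2\le b_t/k_0^2$ degenerates as $\beta\downarrow 0$ (the paper avoids this by bounding $\|v_t\|^2\le 2\|w_t-w_*\|^2+2\|w_{t-1}-w_*\|^2$ for all $\beta$, which you could adopt uniformly instead of treating $\beta=0$ separately); also, like the paper, absorbing the additive constant into $K_6\log(T/\delta)$ implicitly assumes $\log(T/\delta)$ is bounded below (e.g. $T>e\delta$, as used later in Corollary \ref{c:max_noise}). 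None of these affect correctness.
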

\begin{proof}
    It suffices to prove the conclusion for $\|w_t - w_*\|^2$ since $\|z_t - w_*\|^2 \leq 2 \|w_t - w_*\|^2 + 2 k_0^2 \|v_t\|^2$ and $v_t = (w_t - w_*) - (w_{t-1} - w_*)$ gives the following inequalities
    \begin{equation*}
        \begin{aligned}
            \max_{1 \leq t \leq T} \bigg( \|v_t\|^2 , \|z_t - w_*\|^2 \bigg) \leq 10 \max(1, k_0^2) \max_{1 \leq t \leq T} \|w_t - w_*\|^2.
        \end{aligned}
    \end{equation*}

    As a consequence of proposition \ref{p:wt_hp} and $\|w_t - w_*\|^2 \leq 2 \|z_t - w_*\|^2 + 2 k_0^2 \|v_t\|^2$, we have
    \begin{equation}
        \Prob\left( \frac{ \|w_t - w_*\|^2 }{2} > K_3 + K_4 \log\left( \frac{1}{\delta} \right) + K_5 \sum_{s=1}^{t-1} \alpha_s^2 \,\|w_s - w_*\|^2 \right) < \delta ~~~,~~~ \forall t \geq 1.
        \label{eq:p:wt_max:p}
    \end{equation}
    Let $t_1 = \min \{ t \in [1, T] : K_5 \sum_{s=t}^{T} \alpha_s^2 \leq \frac{1}{4} \}$. Note that $t_1$ is well-defined since $\alpha_T^2 \leq \alpha_0^2 \leq \frac{1}{4 K_5}$. In particular, $t_1$ depends only on $L, \gamma, \beta$ since $K_5$ depends only on those parameters. Using lemma \ref{l:conc_1}, with probability greater than $1-\delta$, for any $t \geq 1$,
    \begin{equation*}
        \begin{aligned}
            \frac{\|w_t - w_*\|^2}{2} &\leq K_3 + K_4 \,\log\left(\frac{T}{\delta}\right) + K_5 \,\sum_{s=1}^{t_1} \alpha_s^2 \,\|w_s - w_*\|^2 + K_5\,\sum_{s=t_1}^{T} \alpha_s^2 \,\|w_s - w_*\|^2
            \\
            &\leq K_3 + K_4 \,\log\left(\frac{T}{\delta}\right) + K_5 \sum_{s=1}^{t_1} \alpha_s^2 \left( \sum_{k=1}^{s} \alpha_k \right) + \frac{1}{4} \,\max_{t_1 \leq t \leq T} \|w_t - w_*\|^2
            \\
            &\leq \left( K_3 + K_5 \, K_0\,\sum_{s=1}^{t_1} \alpha_s \right) + K_4 \,\log\left(\frac{T}{\delta}\right) + \frac{1}{4} \,\max_{1 \leq t \leq T} \|w_t - w_*\|^2.
        \end{aligned}
    \end{equation*}
    Since the right hand side is independent of
    $t$, we know $\frac{1}{2} \max_{1 \leq t \leq T} \|w_t - w_*\|^2$ is also bounded by the right hand side. Solving the inequality for $\max_{1 \leq t \leq T} \|w_t - w_*\|^2$ yields the desired result, where $K_6 = \max\{ 4 K_3 + t_1 K_0, 4 K_4, 10 k_0^2, 10 \}$ for all $t > 0$.
\end{proof}

\begin{remark}
    Although $K_6$ depends on the choice of step sizes $\{ \alpha_t : t \geq 0 \}$, in the sequel, we will content ourselves with the choice $\alpha_t$ uniformly bounded by ``polynomially" decaying steps $\alpha_* \, t^{-p} \leq \alpha_t \leq \alpha^* \, t^{-p}$ for all $t > 0$ (which is stated as an assumption in \cref{t:risk_hp}). Under such assumption, the dependency of $K_6$ on $\alpha$ can be removed. Indeed, the parameter $t_1$ influencing $K_6$ can be chosen such that $K_5 \alpha^* \sum_{t=t_1}^{\infty} t^{-2p} < \frac{1}{4}$ for $p \in (\frac{1}{2}, 1]$.
    \label{r:thesis:sgd:step_size_polynomial}
\end{remark}

\subsection{Error Rate with High Probability}

\begin{corollary}
    With the notation in proposition \ref{p:wt_max}, define the event $\Eb_t^\delta$ as
    \begin{equation*}
        \Eb_t^{\delta} := \left\{ \max \left(\|w_t - w_*\|^2, \|z_t - w_*\|^2, \|v_t\|^2 \right) \leq K_6 \, \log \frac{k T}{\delta} \right\}.
    \end{equation*}
    If all assumptions in proposition \ref{p:wt_max} are satisfied, then under the event $\Eb_t^\delta$, with the notation in proposition \ref{p:wt_max} and lemma \ref{l:lsg_sb_ell},
    \begin{enumerate}
        \item $\Eb_t^{\delta} \in \Fa_t \subset \Fa_T$ and $\ds\Prob(\Eb_t^\delta) \geq 1-\delta$.
        \item $F(w_t) \leq (F_* + \frac{L}{1+\gamma} \cdot K_6) \left(\log \frac{T}{\delta}\right)^{\gamma}$ for all $t \leq T$ and $T > e \delta$.
        \item $\|\delta m_t\|^2 \leq (6 L^2 K_6 + a_4) \left(\log \frac{T}{\delta} \right)^{\gamma}$ for all $t \leq T$ and $T > e \delta$.
        \item $\|\nabla \ell(Z_t, w_t)\|^2 \leq (2 L^2 K_6 + a_5) \left(\log \frac{T}{\delta} \right)^{\gamma}$ for all $t \leq T$ and $T > e \delta$.
    \end{enumerate}
    \label{c:max_noise}
\end{corollary}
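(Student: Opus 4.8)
All four assertions reduce to combining Proposition~\ref{p:wt_max} with structural inequalities already in hand for $F$, $\delta m_t$, and $\nabla\ell$, so the plan is largely bookkeeping. I would first dispose of claim~(1): since $\Fa_t=\sigma\{w_s:s\le t\}$, $v_t=w_t-w_{t-1}$, and $z_t=w_t+\tfrac{\beta}{1-\beta}v_t$, the quantities $\|w_t-w_*\|^2$, $\|z_t-w_*\|^2$, $\|v_t\|^2$ are all $\Fa_t$-measurable, so $\Eb_t^\delta\in\Fa_t\subseteq\Fa_T$ for $t\le T$ by monotonicity of the filtration. For the probability bound, the single event $G:=\bigl\{\max_{1\le s\le T}(\|w_s-w_*\|^2,\|z_s-w_*\|^2,\|v_s\|^2)\le K_6\log(T/\delta)\bigr\}$ supplied by Proposition~\ref{p:wt_max} has $\P(G)\ge 1-\delta$ and is contained in $\Eb_t^\delta$ for every $t\le T$ (the threshold defining $\Eb_t^\delta$ dominates $K_6\log(T/\delta)$), hence $\P(\Eb_t^\delta)\ge\P(G)\ge 1-\delta$.

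For claims~(2)--(4) I would argue deterministically on $\Eb_t^\delta$, where $\|w_t-w_*\|^2$, $\|z_t-w_*\|^2$, $\|v_t\|^2$ are each at most $K_6\log(kT/\delta)$. Claim~(2): $F$ is $(\gamma,L)$-smooth by Proposition~\ref{p:F_inherits_L} and $\nabla F(w_*)=0$ since $w_*$ is a global minimizer, so \eqref{eq:gamma_smooth} gives $F(w_t)\le F_*+\tfrac{L}{1+\gamma}\|w_t-w_*\|^{1+\gamma}$; insert the $\Eb_t^\delta$ bound and use $F_*\ge 0$. Claim~(3): lemma~\ref{l:lsg_sb_ell}.\ref{i:l:lsg_sb_ell:dmt} gives $\|\delta m_t\|^2\le 6L^2\|w_t-w_*\|^\gamma+a_4$ (the estimate already used in \eqref{eq:p:wt_hp:mgdiff}); insert $\|w_t-w_*\|^\gamma=(\|w_t-w_*\|^2)^{\gamma/2}$ and the $\Eb_t^\delta$ bound. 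Claim~(4): a bound of the form $\|\nabla\ell(Z_t,w_t)\|^2\le 2L^2\|w_t-w_*\|^{2\gamma}+a_5$ — available from lemma~\ref{l:lsg_sb_ell}, or directly by writing $\nabla\ell(Z_t,w_t)=\bigl(\nabla\ell(Z_t,w_t)-\nabla\ell(Z_t,w_*)\bigr)+\nabla\ell(Z_t,w_*)$ and using $(\gamma,L)$-smoothness together with assumption~\ref{a:ell_cond} to bound $\ell(\cdot,w_*)$ — is treated in the same way, writing $\|w_t-w_*\|^{2\gamma}=(\|w_t-w_*\|^2)^{\gamma}$.

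The only step requiring care, and the one I would pin down first, is the exponent bookkeeping that collapses each right-hand side to the advertised shape $C\,(\log(T/\delta))^{\gamma}$. With $x:=\log(T/\delta)$, the hypothesis $T>e\delta$ forces $x>1$, so $x^r\le x^\gamma$ for $0<r\le\gamma$, any additive constant $a$ obeys $a\le a\,x^\gamma$, and $K_6^{\,r}\le K_6$ for $0<r\le 1$ since $K_6\ge 1$; these three observations convert the raw estimates of claims~(3)--(4) — which carry powers $\gamma/2$ and $\gamma$ of $\|w_t-w_*\|^2\le K_6 x$ — into the stated bounds, and likewise handle claim~(2) when $\gamma=1$ (the case used in Theorem~\ref{t:risk_hp}), where $\|w_t-w_*\|^{1+\gamma}=\|w_t-w_*\|^2\le K_6 x$. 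For $\gamma<1$ the smoothness estimate gives only $\|w_t-w_*\|^{1+\gamma}\le(K_6 x)^{(1+\gamma)/2}$, a minor weakening of the logarithmic exponent that is of no consequence for the $\gamma=1$ high-probability analysis that uses this corollary. Thus the difficulty here is purely in tracking the logarithmic powers consistently across the four estimates, not in any new idea.
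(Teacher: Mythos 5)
Your proof is correct and follows essentially the same route as the paper, which simply combines Proposition \ref{p:wt_max} with lemma \ref{l:lsg_sb_ell}.\ref{i:l:lsg_sb_ell:F_wt}, \ref{l:lsg_sb_ell}.\ref{i:l:lsg_sb_ell:dmt}, and \ref{l:lsg_sb_ell}.\ref{i:l:lsg_sb_ell:nabla_ell_w} and absorbs constants using $\log(T/\delta)>1$ and $K_6\ge 1$. Your observation that for $\gamma<1$ claim (2) really yields the exponent $(1+\gamma)/2$ rather than $\gamma$ is a fair (and harmless, since downstream uses work with $\log^2$ or $\gamma=1$) correction to the statement, not a gap in your argument.
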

\begin{proof}
    Under the assumptions of corollary \ref{c:max_noise}, $T > 0$ is fixed and $t \leq T$. Therefore,  $\Eb_k$ is in $\Fa_t$. The proof follows directly from proposition \ref{p:wt_max}, lemma \ref{l:lsg_sb_ell}.\ref{i:l:lsg_sb_ell:F_wt}, \ref{l:lsg_sb_ell}.\ref{i:l:lsg_sb_ell:dmt}, and \ref{l:lsg_sb_ell}.\ref{i:l:lsg_sb_ell:nabla_ell_w}.
\end{proof}

\begin{proposition}
    With the notation in lemma \ref{l:conc_1}, if all of the assumptions in proposition \ref{p:wt_max} holds, then there exists a constant  $K_7(L,\gamma,\beta, w_0, w_*) > 0$ such that for any $T > 0$ and $0 < \delta < \min \left( 1, 2T/e \right)$,
    \begin{equation*}
        \Prob \left( \sum_{t=1}^{T} \alpha_t \,(F(w_t) - F_*) \leq K_7 \left( \log \frac{2T}{\delta} \right)^{3/2} \right) \geq 1-\delta.
    \end{equation*}
    \label{p:sum_weighted_risk}
\end{proposition}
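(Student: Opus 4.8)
The plan is to convert the one-step estimate (\ref{eq:p:wt_hp:dz_dv}) into a bound on the running sum by summing it over $t=1,\dots,T$. Telescoping the term $2k_4(\alpha_{t-1}F(w_{t-1})-\alpha_t F(w_t))$, using $z_1=w_0$ and $v_1=0$, and discarding the non-positive boundary terms $-\|z_{T+1}-w_*\|^2-k_0^2\|v_{T+1}\|^2-2k_4\alpha_T F(w_T)$, the summand $\tfrac{2}{1-\beta}\alpha_t(F(w_*)-F(w_t))$ becomes $-\tfrac{2}{1-\beta}\sum_{t=1}^{T}\alpha_t(F(w_t)-F_*)$, which I move to the left-hand side, obtaining
\begin{equation*}
    \frac{2}{1-\beta}\sum_{t=1}^{T}\alpha_t\,(F(w_t)-F_*)\;\le\;\|w_0-w_*\|^2 + 2k_4\,\alpha_0\,F(w_0) + \frac{2}{1-\beta}\sum_{t=1}^{T}\xi_t + \sum_{t=1}^{T}\varphi_t ,
\end{equation*}
with $\xi_t,\varphi_t$ as in (\ref{eq:p:wt_hp:xi_phi}). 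By (\ref{eq:p:wt_hp:phi}) the sum $\sum_{t}\varphi_t$ is bounded a.s.-$\P$ by the constant $(a_1k_0^2+k_1)K_2+(a_2k_0^2+k_2)$, so everything reduces to an upper tail bound for the martingale sum $\sum_{t=1}^{T}\xi_t$.

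For the martingale term I would localize on the good event of proposition \ref{p:wt_max}. Set $M:=K_6\log(2T/\delta)$ and let $\tau$ be the first $t$ with $\max(\|w_t-w_*\|^2,\|v_t\|^2)>M$, capped at $T+1$; then $\{\tau>t\}\in\Fa_t$, and since $\CondExp{\xi_t}{\Fa_t}=0$ the truncated sequence $\hat\xi_t:=\xi_t\,\IndicatorB{\tau>t}$ is again a martingale difference sequence (for the shifted filtration $\sigma(\Fa_t,Z_t)$). On $\{\tau>t\}$, lemma \ref{l:lsg_sb_ell}.\ref{i:l:lsg_sb_ell:dmt} bounds $\|\delta m_t\|^2$ by a constant multiple of $\|w_t-w_*\|^2+1\lesssim M$ (recall $\gamma=1$), so $|\hat\xi_t|\le c_t$ with the \emph{deterministic} bounds $c_t:=C\alpha_t M$; and since $p>\tfrac12$ gives $\sum_{t\ge1}\alpha_t^2<\infty$, we get $\sum_{t\ge1}c_t^2\lesssim M^2$.

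I would then apply the Azuma--Hoeffding inequality of \cite[lemma 21]{lei2018} to $\{\hat\xi_t\}$ at level $\delta/2$: with probability at least $1-\delta/2$,
\begin{equation*}
    \sum_{t=1}^{T}\hat\xi_t \;\le\; \sqrt{2\Big(\textstyle\sum_{t=1}^{T}c_t^2\Big)\log\tfrac{2}{\delta}}\;\lesssim\; M\sqrt{\log\tfrac{2}{\delta}}\;\le\; K_6\,\Big(\log\tfrac{2T}{\delta}\Big)^{3/2},
\end{equation*}
where the last step uses $\log(2/\delta)\le\log(2T/\delta)$. Intersecting with the event of proposition \ref{p:wt_max} at level $\delta/2$ (total failure probability $\le\delta$) forces $\tau>T$, hence $\hat\xi_t=\xi_t$ for all $t\le T$, so the bound transfers verbatim to $\sum_{t=1}^{T}\xi_t$. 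Plugging into the displayed inequality and absorbing the finite, $\delta$-independent constants into the leading power — legitimate because the hypothesis $\delta<2T/e$ makes $\log(2T/\delta)>1$ — yields $\sum_{t=1}^{T}\alpha_t(F(w_t)-F_*)\le K_7(\log(2T/\delta))^{3/2}$ with probability at least $1-\delta$, with $K_7$ depending only on $L,\gamma,\beta,w_0,w_*$.

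The main obstacle is precisely why the truncation is unavoidable and why Azuma--Hoeffding (rather than a Bernstein/Freedman bound) is the right tool: unconditionally lemma \ref{l:conc_1} gives only $\|w_t-w_*\|^2=O(\sum_{s<t}\alpha_s)=O(t^{1-p})$, hence only the a.s.\ bound $|\xi_t|=O(t^{1-2p})$, and $\sum_t t^{2-4p}$ diverges when $p\le\tfrac34$; replacing this by the truncation radius $\sqrt{M}\asymp\sqrt{\log(2T/\delta)}$ makes $\sum_t c_t^2$ finite (here $p>\tfrac12$ enters) while leaving the sum unchanged on the high-probability event, and a Freedman-type bound would contribute an extra additive term of order $M\log(1/\delta)\asymp(\log(2T/\delta))^2$, too large for the claimed exponent, whereas Azuma contributes only $\sqrt{M^2\log(1/\delta)}\asymp(\log(2T/\delta))^{3/2}$. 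The remaining work — choosing exactly which iterate norms must enter $\tau$ so that $|\hat\xi_t|$ is controlled via lemma \ref{l:lsg_sb_ell}, verifying the martingale-difference property under the shifted filtration, and bookkeeping the $\delta/2+\delta/2$ split and the constant $K_7$ — is routine.
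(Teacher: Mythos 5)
Your proposal is correct and follows essentially the same route as the paper: sum the one-step inequality (\ref{eq:p:wt_hp:dz_dv}), telescope and discard the non-positive boundary terms, bound $\sum_t\varphi_t$ almost surely via (\ref{eq:p:wt_hp:phi}), and control $\sum_t\xi_t$ by truncating on the high-probability localization from proposition \ref{p:wt_max} and applying Azuma--Hoeffding \cite[lemma 21.a]{lei2018} with a $\delta/2+\delta/2$ split. Your stopping-time truncation $\hat\xi_t=\xi_t\IndicatorB{\tau>t}$ plays exactly the role of the paper's indicator $\xi_t\Indicator{\Eb_t^{\delta/2}}$, and the parenthetical ``recall $\gamma=1$'' is unnecessary since $\|w_t-w_*\|^{2\gamma}\lesssim\|w_t-w_*\|^2+1$ for all $\gamma\in(0,1]$, so the same bound holds in the generality of the statement.
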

\begin{proof}
    Using the notation in (\ref{eq:const}), rearranging terms in (\ref{eq:p:wt_hp:dz_dv}) and using $F(w_*) = F_*$ gives us
    \begin{equation}
        \frac{2}{1-\beta} \,\alpha_t (F(w_t) - F_*) \leq 2 \, k_4 \, (\alpha_{t-1} F(w_{t-1}) - \alpha_t F(w_t)) - \Delta z_{t+1} - k_0^2 \Delta v_{t+1} + \frac{2 \, \xi_t}{1-\beta} + \varphi_t
        \label{eq:p:sum_weighted_risk:1}
    \end{equation}
    where $\xi_t$ and $\varphi_t$ are defined in (\ref{eq:p:wt_hp:xi_phi}). By lemma \ref{l:lsg_sb_ell}.\ref{i:l:lsg_sb_ell:dmt}, we have
    \begin{equation}
        \begin{aligned}
            \xi_t &= (1-\beta) k_4 \, \alpha_t \InnerProd{\delta m_t}{v_t} + \,\alpha_t \InnerProd{\delta m_t}{w_t - w_*}
            \\
            &\leq \left( k_4 + 1 \right) \, \alpha_t \,\|\delta m_t\| \, \left( \|v_t\| + \|w_t - w_*\| \right)
            \\
            &\leq \left( k_4 + 1 \right) \alpha_t \, \sqrt{6 L^2 \|w_t - w_*\|^{2\gamma} + a_4} \, ( \|v_t\| + \|w_t - w_*\|)
        \end{aligned}
        \label{eq:p:sum_weighted_risk:xi}
    \end{equation}
    Let $\Eb_t^{\delta/2}$ be the event in corollary \ref{c:max_noise}. For a fixed $\delta \in (0, 1)$ and $T \geq t$, define $c = 4 \sqrt{6 L^2 + a_4} \left( k_4 + 1 \right)$ and $\xi_t' = \xi_t \Indicator{\Eb_t^{\delta/2}}$. Since $2T/\delta > e$, we have $\|w_t - w_*\|^{2\gamma} \leq \log(2T/\delta) \leq (\log(2T/\delta))^2$. Therefore, by (\ref{eq:p:sum_weighted_risk:xi}), we get
    \begin{equation*}
        \begin{aligned}
            \xi_t' &\leq 2\sqrt{2} \left( k_4 + 1 \right) \sqrt{ 2(6 L^2 + a_4) \,\alpha_t^2 \cdot \left( \log \frac{2 T}{\delta} \right)^2 }.
        \end{aligned}
    \end{equation*}
    Applying \cite[lemma 21.a]{lei2018} to $\xi_t'$ yields
    \begin{equation}
        \begin{aligned}
            \sum_{t=1}^{T} \xi_t' \leq \left( c \log \frac{2T}{\delta} \right) \left( 2 \log \frac{1}{\delta} \sum_{t=1}^{T} \alpha_t^2 \right)^{\frac{1}{2}} \leq c \sqrt{2} \left( \log \frac{2T}{\delta} \right)^{\frac{3}{2}}.
        \end{aligned}
        \label{eq:p:sum_weighted_risk:xi2}
    \end{equation}
    Let $K' > 0$ be the constant on the right hand side of (\ref{eq:p:wt_hp:phi}). Applying proposition \ref{p:wt_max} and substituting (\ref{eq:p:sum_weighted_risk:xi2}) to (\ref{eq:p:sum_weighted_risk:1}) yields that the following occurs with probability at least $1 - \delta$:
    \begin{equation*}
        \sum_{t=1}^{T} \alpha_{t} (F(w_t) - F_*) \leq \frac{1-\beta}{2} \left( \alpha_0 F(w_0) + \|z_1 - w_*\|^2 + \|v_1\|^2 + \frac{2}{1-\beta} \cdot c \sqrt{2} \left(\log \frac{2T}{\delta}\right)^{\frac{3}{2}} + K' \right).
    \end{equation*}
    Since $z_1 = w_0$, $v_1 = 0$, and $2T/\delta > e$, we take $K_7 = 4 \max \left( \alpha_0 \, F(w_0), \|w_0 - w_*\|^2, c \sqrt{2}, K' \right)$.
\end{proof}

\begin{proposition}
    With the notation as in lemma \ref{l:conc_1}, if all assumptions in proposition \ref{p:wt_max} holds, then there exists a constant $K_8(L,\gamma,\beta) > 0$ such that for any $\delta \in (0, 1)$, $T > 1$, and $\tau \in [1,T]$,
    \begin{equation*}
        \Prob \left[ \sum_{t=\tau+1}^{T} \alpha_t \, \|v_t\|^2 \leq K_8 \cdot \max \left( \alpha_{\tau}, \sqrt{\sum_{t=\tau}^{T} \alpha_t^4}, \sum_{t=\tau}^{T} \alpha_t^3 \right) \left( \log \frac{2T}{\delta} \right)^2 \right] \geq 1 - \delta.
    \end{equation*}
    \label{p:sum_at_vt}
\end{proposition}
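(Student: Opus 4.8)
The plan is to convert the one-step recursion for $v_t$ into a weighted telescoping identity, split off the martingale part of the noise, and bound the remaining deterministic pieces on the high-probability event furnished by Proposition~\ref{p:wt_max}. Concretely, from the identity contained in (\ref{eq:l:conc_1:dv}), namely $\|v_{t+1}\|^2=\beta^2\|v_t\|^2-2\alpha_t\beta\InnerProd{\nabla\ell(Z_t,w_t)}{v_t}+\alpha_t^2\|\nabla\ell(Z_t,w_t)\|^2$, I would rearrange to $(1-\beta^2)\|v_t\|^2=(\|v_t\|^2-\|v_{t+1}\|^2)-2\alpha_t\beta\InnerProd{\nabla\ell(Z_t,w_t)}{v_t}+\alpha_t^2\|\nabla\ell(Z_t,w_t)\|^2$, multiply by $\alpha_t$, and sum over $t=\tau,\dots,T$. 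Abel summation together with $\alpha_{t+1}\le\alpha_t$ and $\|v_t\|^2\ge0$ gives $\sum_{t=\tau}^{T}\alpha_t(\|v_t\|^2-\|v_{t+1}\|^2)\le\alpha_\tau\|v_\tau\|^2$, so that
\[
(1-\beta^2)\sum_{t=\tau+1}^{T}\alpha_t\|v_t\|^2\le\alpha_\tau\|v_\tau\|^2-2\beta\sum_{t=\tau}^{T}\alpha_t^2\InnerProd{\nabla\ell(Z_t,w_t)}{v_t}+\sum_{t=\tau}^{T}\alpha_t^3\|\nabla\ell(Z_t,w_t)\|^2 .
\]

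Next I would write $\nabla\ell(Z_t,w_t)=\nabla F(w_t)-\delta m_t$ in the middle sum. For the $\nabla F$ part, convexity of $F$ gives $\InnerProd{\nabla F(w_t)}{v_t}=\InnerProd{\nabla F(w_t)}{w_t-w_{t-1}}\ge F(w_t)-F(w_{t-1})$, hence $-2\beta\sum_{t=\tau}^{T}\alpha_t^2\InnerProd{\nabla F(w_t)}{v_t}\le 2\beta\sum_{t=\tau}^{T}\alpha_t^2\big(F(w_{t-1})-F(w_t)\big)$, and a second Abel summation (using that $\alpha_t^2$ is decreasing and $F\ge F_*\ge0$) collapses this to at most $2\beta\alpha_\tau^2 F(w_{\tau-1})$. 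The term $2\beta\sum_{t=\tau}^{T}\alpha_t^2\InnerProd{\delta m_t}{v_t}$ is a sum of martingale differences, since $v_t\in m\Fa_t$ and $\CondExp{\delta m_t}{\Fa_t}=0$, and will be handled by truncation and concentration.

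To finish, fix $\delta\in(0,1)$, $T>1$, $\tau\in[1,T]$ and set $\Omega_t:=\{\max_{1\le s\le t}\max(\|w_s-w_*\|^2,\|z_s-w_*\|^2,\|v_s\|^2)\le K_6\log\tfrac{2T}{\delta}\}\in\Fa_t$; these events are nested and $\P(\Omega_T)\ge1-\delta/2$ by Proposition~\ref{p:wt_max} applied with tolerance $\delta/2$. On $\Omega_T$, using $(\gamma,L)$-smoothness together with Lemma~\ref{l:lsg_sb_ell}, each of $\|v_\tau\|^2$, $F(w_{\tau-1})$, $\|\delta m_t\|^2$, $\|\nabla\ell(Z_t,w_t)\|^2$ is $O\big((\log\tfrac{2T}{\delta})^{\gamma}\big)=O\big(\log\tfrac{2T}{\delta}\big)$ for $t\le T$ (for $\tau=1$ the quantity $F(w_0)$ is a fixed constant absorbed into the final constant); hence $\alpha_\tau\|v_\tau\|^2+2\beta\alpha_\tau^2 F(w_{\tau-1})\lesssim\alpha_\tau\log\tfrac{2T}{\delta}$ and $\sum_{t=\tau}^{T}\alpha_t^3\|\nabla\ell(Z_t,w_t)\|^2\lesssim\big(\log\tfrac{2T}{\delta}\big)\sum_{t=\tau}^{T}\alpha_t^3$. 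For the martingale sum, put $\xi_t:=\alpha_t^2\InnerProd{\delta m_t}{v_t}\,\Indicator{\Omega_t}$; then $\CondExp{\xi_t}{\Fa_t}=0$ and, on $\Omega_t$, Lemma~\ref{l:lsg_sb_ell} gives $|\xi_t|\lesssim\alpha_t^2\log\tfrac{2T}{\delta}$, so Azuma--Hoeffding in the form of \cite[lemma 21.a]{lei2018} yields, with probability at least $1-\delta/2$, $\big|\sum_{t=\tau}^{T}\xi_t\big|\lesssim\big(\log\tfrac{2T}{\delta}\big)\big(\log\tfrac1\delta\sum_{t=\tau}^{T}\alpha_t^4\big)^{1/2}\lesssim\big(\log\tfrac{2T}{\delta}\big)^{3/2}\big(\sum_{t=\tau}^{T}\alpha_t^4\big)^{1/2}$; and on $\Omega_T$ one has $\Indicator{\Omega_t}=1$ for all $t\le T$, so $\sum_{t=\tau}^{T}\xi_t=\sum_{t=\tau}^{T}\alpha_t^2\InnerProd{\delta m_t}{v_t}$. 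Intersecting the two events (probability $\ge1-\delta$), dividing by $1-\beta^2$, and using $\big(\log\tfrac{2T}{\delta}\big)^{3/2}\le\big(\log\tfrac{2T}{\delta}\big)^2$ gives the asserted inequality with $K_8$ the accumulated constant.

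The only genuinely delicate point is the martingale term: the truncation must be by the $\Fa_t$-measurable events $\Omega_t$ (rather than $\Omega_T$ itself, which lies only in $\Fa_T$) so that \cite[lemma 21]{lei2018} applies, and the almost-sure bound on $|\xi_t|$ must be kept at order $\alpha_t^2\log\tfrac{2T}{\delta}$ — this is exactly what produces the $\big(\sum\alpha_t^4\big)^{1/2}$ factor, whereas a purely deterministic bound on $\InnerProd{\delta m_t}{v_t}$ would only give the weaker $\sum\alpha_t^2$. One could equally use Bernstein's inequality \cite[lemma 21.b]{lei2018} with the conditional-variance estimate $\CondExp{\xi_t^2}{\Fa_t}\lesssim\alpha_t^4(\log\tfrac{2T}{\delta})^{1+\gamma}$ coming from Lemma~\ref{l:lsg_sb_ell}; everything else is Abel summation and routine estimation using $\alpha_t\in\ell^2(\N)$ and $\alpha_t\le\alpha_0\le1$.
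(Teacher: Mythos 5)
Your proposal is correct and follows essentially the same route as the paper's proof: a weighted (Abel/telescoped) version of the $\|v_t\|^2$ recursion, convexity of $F$ to collapse the $\InnerProd{\nabla F(w_t)}{v_t}$ terms, the high-probability event from Proposition \ref{p:wt_max} (Corollary \ref{c:max_noise}) to bound $\|v_\tau\|^2$, $F$, and $\|\nabla\ell(Z_t,w_t)\|^2$ by logarithmic factors, and Azuma--Hoeffding applied to the $\Fa_t$-truncated martingale differences $\alpha_t^2\InnerProd{\delta m_t}{v_t}$, which is exactly what produces the $\sqrt{\sum_t \alpha_t^4}$ term. The only cosmetic difference is that your telescoping lands on $F(w_{\tau-1})$ rather than the paper's $F(w_\tau)$, which is immaterial to the stated bound.
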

\begin{proof}
    Consider the recursion $\alpha_{t+1} \, \|v_{t+1}\|^2 \leq \beta^2 \alpha_t \, \|v_t\|^2 - 2 \alpha_t^2 \beta \InnerProd{\nabla \ell(Z_t, w_t)}{v_t} + \alpha_t^3 \|\nabla \ell(Z_t, w_t)\|^2$ obtained from the recursion of $\|v_t\|^2$. Since $v_1 = 0$, $F$ is convex and $\alpha_t$ is decreasing, summing both sides from $t=\tau$ to $t=T$ yields
    \begin{align*}
        (1-\beta^2) \sum_{t=\tau+1}^{T} \alpha_t \, \|v_t\|^2 &\leq \beta^2 \alpha_{\tau} \|v_{\tau}\|^2 - \alpha_{T+1} \|v_{T+1}\|^2 +  \sum_{t=\tau}^{T} -2 \beta \alpha_t^2 \bigg( \InnerProd{\nabla F(w_t)}{v_t} - \InnerProd{\delta m_t}{v_t} \bigg)
        \notag
        \\
        &+ \sum_{t=\tau}^{T} \alpha_t^3 \|\nabla \ell(Z_t, w_t)\|^2
        \notag
        \\
        \notag
        \\
        &\leq \beta^2 \alpha_{\tau} \|v_{\tau}\|^2 + 2 \beta (\alpha_{\tau} F(w_{\tau}) - \alpha_T F(w_T))
        \notag
        \\
        &+ \sum_{t=\tau}^{T} \bigg\{ 2\beta \alpha_t^2 \InnerProd{\delta m_t}{v_t} + \alpha_t^3 \|\nabla \ell(Z_t, w_t)\|^2 \bigg\}.
    \end{align*}
    By corollary \ref{c:max_noise}, there exists an event $\Eb_t^{\delta/2}$ such that
    \begin{equation}
        \sum_{t=\tau+1}^{T} \alpha_t \, \|v_t\|^2 \leq c_0 \cdot \left( \log \frac{2T}{\delta} \right)^2 \left(  \alpha_\tau + \sum_{t=\tau}^{T} \left\{\alpha_t^2 \InnerProd{\delta m_t}{v_t} + \alpha_t^3 \right\} \right).
        \label{eq:p:sum_at_vt:1}
    \end{equation}
    where
    \begin{equation*}
        c_0 = \frac{1}{1-\beta^2} \cdot \max \left( \beta^2, 2\beta \left(F_* + \frac{L K_6}{1+\gamma}\right), 2L^2 K_6 + a_5 \right).
    \end{equation*}

    It remains to estimate the martingale part in (\ref{eq:p:sum_at_vt:1}). Applying Cauchy-Schwarz and lemma \ref{l:lsg_sb_ell}.\ref{i:l:lsg_sb_ell:dmt} gives
    \begin{equation*}
        \alpha_t^2 \cdot |\InnerProd{\delta m_t}{v_t}| \cdot \Indicator{\Eb_t^{\delta/2}} \leq \alpha_t^2 \cdot \sqrt{6L^2 + a_4} \cdot \left( \log \frac{2T}{\delta} \right)^{\gamma}
    \end{equation*}
    Using Azuma-Hoeffding \cite[lemma 21.a]{lei2018}, there exists an event $\Eb \in \Fa_T$ such that $\Prob(\Eb) \geq 1-\delta$ and $\cap_{t=\tau}^{T} \Eb_t^{\delta/2} \subset \Eb$ for which the following occurs
    \begin{equation*}
        \sum_{t=\tau+1}^{T} \alpha_t \|v_t\|^2 \leq c_0 \cdot \left( \log \frac{2T}{\delta} \right)^2 \left(  \alpha_\tau + \sqrt{ 2 \sum_{t=\tau}^{T} \alpha_t^4} + \sum_{t=\tau}^{T} \alpha_t^3 \right).
    \end{equation*}
    This completes the proof.
\end{proof}

\begin{lemma}
    With the notation as in lemma \ref{l:conc_1}, if all assumptions in proposition \ref{p:wt_max} holds, then there exists a constant $K_9(L,\gamma,\beta) > 0$ such that for any $\delta \in (0, 1)$, $T > 1$, and $\tau \in [1,T]$,
    \begin{equation*}
        \Prob \left[ \sum_{t=\tau}^{T} \alpha_t \InnerProd{\delta m_t}{v_t} \leq K_9 \cdot \max\left(\alpha_{\tau-1}, \sqrt{\sum_{t=\tau-1}^{T} \alpha_t^4}, \sum_{t=\tau-1}^{T} \alpha_t^3 \right) \left( \log \frac{3T}{\delta} \right)^2 \right] \geq 1 - \delta.
    \end{equation*}
    \label{l:sum_dmt_vt}
\end{lemma}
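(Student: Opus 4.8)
The plan is to treat $\xi_t:=\alpha_t\InnerProd{\delta m_t}{v_t}$ as a martingale difference and apply a Bernstein-type concentration bound to $\sum_{t=\tau}^{T}\xi_t$. Indeed $v_t=w_t-w_{t-1}$ is $\Fa_t$-measurable and $\CondExp{\delta m_t}{\Fa_t}=0$ by (\ref{eq:grad_F_estimator}), so $\CondExp{\xi_t}{\Fa_t}=0$; the same holds for the truncated increments $\xi_t':=\xi_t\Indicator{\Eb_t^{\delta/3}}$, where $\Eb_t^{\delta/3}$ is the event of corollary \ref{c:max_noise} (an element of $\Fa_t$). Since, by proposition \ref{p:wt_max}, $\P\bigl(\bigcap_{t\le T}\Eb_t^{\delta/3}\bigr)\ge 1-\delta/3$ and on that event $\xi_t'=\xi_t$ for all $t\le T$, it suffices to bound $\sum_{t=\tau}^{T}\xi_t'$. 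Throughout I take $\gamma=1$ for definiteness, as needed for theorem \ref{t:risk_hp} (the same estimates persist for $\gamma\in(0,1]$ because $(\log\tfrac{3T}{\delta})^{(\gamma+1)/2}\le\log\tfrac{3T}{\delta}$).

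For the hypotheses of \cite[lemma 21.b]{lei2018} I would first, on $\Eb_t^{\delta/3}$, combine Cauchy--Schwarz, the deterministic bound $\|\delta m_t\|^2\le 6L^2\|w_t-w_*\|^{2\gamma}+a_4$ from lemma \ref{l:lsg_sb_ell}.\ref{i:l:lsg_sb_ell:dmt}, and corollary \ref{c:max_noise} (which gives $\|w_t-w_*\|^2,\|v_t\|^2\lesssim\log\tfrac{3T}{\delta}$ there) to obtain a uniform increment bound $|\xi_t'|\le b:=c_1\,\alpha_{\tau-1}\log\tfrac{3T}{\delta}$ valid for all $\tau\le t\le T$ — here I use that $\alpha_t$ is decreasing, so $\alpha_t\le\alpha_{\tau-1}$ — and a predictable variance bound
\[
\CondExp{(\xi_t')^2}{\Fa_t}\ \le\ \alpha_t^2\,\|v_t\|^2\,\Indicator{\Eb_t^{\delta/3}}\CondExp{\|\delta m_t\|^2}{\Fa_t}\ \le\ c_2\,\bigl(\log\tfrac{3T}{\delta}\bigr)\,\alpha_t^2\,\|v_t\|^2=:\sigma_t^2,
\]
with $c_1,c_2$ depending only on $L,\gamma,\beta$. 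Applying \cite[lemma 21.b]{lei2018} with $q=1$ and tolerance $\delta/3$ then gives, with probability $\ge 1-\delta/3$,
\[
\sum_{t=\tau}^{T}\xi_t'\ \le\ (e-2)\,\frac{1}{b}\sum_{t=\tau}^{T}\sigma_t^2+b\log\tfrac{3}{\delta}\ =\ (e-2)\,\frac{c_2}{c_1\,\alpha_{\tau-1}}\sum_{t=\tau}^{T}\alpha_t^2\|v_t\|^2+b\log\tfrac{3}{\delta}\ \le\ \frac{(e-2)c_2}{c_1}\sum_{t=\tau}^{T}\alpha_t\|v_t\|^2+c_1\,\alpha_{\tau-1}\bigl(\log\tfrac{3T}{\delta}\bigr)^2,
\]
the last step using $\alpha_t^2\le\alpha_{\tau-1}\alpha_t$ for $t\ge\tau$ (which makes the factor $\alpha_{\tau-1}$ hidden in $b$ cancel) together with $\log\tfrac3\delta\le\log\tfrac{3T}{\delta}$.

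It remains to bound $\sum_{t=\tau}^{T}\alpha_t\|v_t\|^2$, and this is precisely proposition \ref{p:sum_at_vt} applied with $\tau-1$ in place of $\tau$ (the case $\tau=1$ being handled by $v_1=0$) and with tolerance $\delta/3$: on an event $\Eb'$ of probability $\ge 1-\delta/3$,
\[
\sum_{t=\tau}^{T}\alpha_t\|v_t\|^2\ \le\ K_8\,\max\!\Bigl(\alpha_{\tau-1},\ \sqrt{\textstyle\sum_{t=\tau-1}^{T}\alpha_t^4},\ \textstyle\sum_{t=\tau-1}^{T}\alpha_t^3\Bigr)\bigl(\log\tfrac{6T}{\delta}\bigr)^2\ \le\ 4K_8\,\max(\cdots)\bigl(\log\tfrac{3T}{\delta}\bigr)^2 ,
\]
where I used $\log\tfrac{6T}{\delta}\le 2\log\tfrac{3T}{\delta}$ for $T>1,\ \delta<1$. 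Intersecting the three events above — which by a union bound has probability $\ge 1-\delta$ — and using that $\alpha_{\tau-1}$ is itself one of the three entries of $\max(\cdots)$, one obtains $\sum_{t=\tau}^{T}\alpha_t\InnerProd{\delta m_t}{v_t}=\sum_{t=\tau}^{T}\xi_t'\le K_9\,\max(\cdots)\bigl(\log\tfrac{3T}{\delta}\bigr)^2$ for a constant $K_9(L,\gamma,\beta)$, as asserted.

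The main obstacle is the choice of inequality. A direct Azuma--Hoeffding estimate on $\{\xi_t'\}$ only bounds $\sum_{t=\tau}^{T}\xi_t'$ by $O\bigl(\log\tfrac{3T}{\delta}\,(\sum_{t\ge\tau}\alpha_t^2)^{1/2}\bigr)=O\bigl(\log\tfrac{3T}{\delta}\cdot\tau^{(1-2p)/2}\bigr)$, which for $p\in(\tfrac12,1)$ is of order $\tau^{1/2-p}$ — strictly worse than the target $\alpha_{\tau-1}=\Theta(\tau^{-p})$. One is therefore forced to pass through the predictable quadratic variation; but the quadratic variation $\sum\sigma_t^2\propto\log\tfrac{3T}{\delta}\sum\alpha_t^2\|v_t\|^2$ has no deterministic bound with the required $\tau$-decay, so the decay must be imported from proposition \ref{p:sum_at_vt} after peeling one factor $\alpha_t\le\alpha_{\tau-1}$ off $\alpha_t^2\|v_t\|^2$. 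Arranging the uniform increment bound so that $b\propto\alpha_{\tau-1}$ and this factor cancels against $\tfrac1b\sum\sigma_t^2$ is the delicate bookkeeping point, and getting it right is exactly what prevents the final estimate from blowing up in $\tau$.
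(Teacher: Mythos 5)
Your proof is correct and follows essentially the same route as the paper: truncate $\alpha_t\InnerProd{\delta m_t}{v_t}$ on the event from corollary \ref{c:max_noise}/proposition \ref{p:wt_max}, apply Bernstein's inequality \cite[lemma 21.b]{lei2018} so the predictable variance reduces (after cancelling the step-size/log factors against the uniform increment bound) to $\sum_{t\ge\tau}\alpha_t\|v_t\|^2$, then import the $\tau$-decay from proposition \ref{p:sum_at_vt} with the index shifted to $\tau-1$ and finish with a union bound. The only differences are cosmetic bookkeeping — you take $q=1$ and place $\alpha_{\tau-1}\log\tfrac{3T}{\delta}$ in the increment bound $b$, whereas the paper chooses $q$ with $(e^q-q-1)/q=1$ and lets $\alpha_\tau(\log\tfrac{3T}{\delta})^{\gamma}$ cancel inside the variance-to-$b$ ratio.
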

\begin{proof}
    Let $\Eb_t^{\delta/3}$ be the event defined in corollary \ref{c:max_noise}. Using Bernstein's inequality \cite[lemma 21.b]{lei2018}, we can bound the martingale $\sum_{t=\tau}^{T} \alpha_t \,\InnerProd{\delta m_t}{v_t}$ from above. By corollary \ref{c:max_noise}, we have the estimates
    \begin{equation*}
        \alpha_t \cdot |\InnerProd{\delta m_t}{v_t}| \cdot \Indicator{\Eb_t^{\delta/2}} \leq \sqrt{6 L^2 K_6 + a_4} \cdot \alpha_\tau \cdot \left( \log \frac{3T}{\delta} \right)^{\gamma}
    \end{equation*}
    and
    \begin{equation*}
        \alpha_t^2 \cdot \CondExp{|\InnerProd{\delta m_t}{v_t}|^2 \cdot \Indicator{\Eb_t^{\delta/2}}}{\Fa_t} \leq (6L^2 K_6 + a_4) \cdot \alpha_\tau \cdot\alpha_t \cdot \|v_t\|^2 \cdot \left( \log \frac{3T}{\delta} \right)^{\gamma}.
    \end{equation*}
    Let $b = \sqrt{6L^2 K_6 + a_4}$. Choosing $q > 0$ such that $(e^q-q-1)/q = 1$, by Bernstein's inequality \cite[lemma 21.b]{lei2018}, the following occurs with probability at least $1-\delta/3$:
    \begin{equation}
        \sum_{t=\tau}^{T} \alpha_t \, \InnerProd{\delta m_t}{v_t} \leq b \left( \sum_{t=\tau}^{T} \alpha_t \, \|v_t\|^2 \right) + \frac{b}{q} \cdot \alpha_\tau \left(\log \frac{3}{\delta}\right).
        \label{eq:l:sum_dmt_vt:1}
    \end{equation}
    Let $\Eb' \in \Fa_T$ denote the event of (\ref{eq:l:sum_dmt_vt:1}) and $\Eb_T' \in \Fa_T$ be the event in the conclusion of proposition \ref{p:sum_at_vt} replacing $\delta $ by $2\delta/3$ instead. Note that $\Prob(\Eb' \cap \Eb_T') \geq 1-\delta$ and in the event $\Eb' \cap \Eb_T' \in \Fa_T$,
    \begin{equation*}
        \sum_{t=\tau}^{T} \alpha_t \,\InnerProd{\delta m_t}{v_t} \leq b \cdot \left( K_8 + \frac{1}{q} \right) \cdot \max\left(\alpha_{\tau-1}, \sqrt{\sum_{t=\tau-1}^{T} \alpha_t^4}, \sum_{t=\tau-1}^{T} \alpha_t^3 \right) \cdot \left( \log \frac{3T}{\delta} \right)^2.
    \end{equation*}
    This completes the proof.
\end{proof}

\begin{lemma}
    With the notation as in lemma \ref{l:conc_1}, if all assumptions in proposition \ref{p:wt_max} holds, then there exists a constant $K_{10}(L,\gamma,\beta) > 0$ such that for any $\delta \in (0, 1)$, $T > 1$, and $\tau \in [0,T]$,
    \begin{equation*}
        \Prob \left[ \sum_{t=\tau}^{T} \InnerProd{\nabla F(w_t)}{-\alpha_t \nabla \ell(Z_t, w_t)} \leq K_{10} \cdot \alpha_{\tau} \left( \log \frac{2T}{\delta} \right)^{1+\gamma} \right] \geq 1 - \delta.
    \end{equation*}
    \label{l:sum_gradF_mt}
\end{lemma}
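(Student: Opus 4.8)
The plan is to use $\nabla\ell(Z_t,w_t)=\nabla F(w_t)-\delta m_t$ to split the sum into a descent part and a martingale part:
\[
\sum_{t=\tau}^{T}\InnerProd{\nabla F(w_t)}{-\alpha_t\nabla\ell(Z_t,w_t)}
= -\sum_{t=\tau}^{T}\alpha_t\|\nabla F(w_t)\|^2 + \sum_{t=\tau}^{T}\alpha_t\InnerProd{\nabla F(w_t)}{\delta m_t}.
\]
Unlike in lemma \ref{l:sum_dmt_vt}, the descent term $-\sum_{t=\tau}^{T}\alpha_t\|\nabla F(w_t)\|^2$ cannot simply be discarded: a naive Bernstein bound for the martingale $\sum_{t=\tau}^{T}\alpha_t\InnerProd{\nabla F(w_t)}{\delta m_t}$ only yields something of order $\big(\sum_{t\ge\tau}\alpha_t^2\big)^{1/2}(\log\frac{T}{\delta})^{\gamma+1/2}$, which dominates $\alpha_\tau$. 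Instead I will show, after truncating on the high-probability event of proposition \ref{p:wt_max}, that the martingale part is at most $\sum_{t=\tau}^{T}\alpha_t\|\nabla F(w_t)\|^2$ plus a term of order $\alpha_\tau(\log\frac{2T}{\delta})^{1+\gamma}$, so that the descent term absorbs the first piece exactly — the ``effectively negative'' mechanism of \cite[Section 4.2]{lei2018}.

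First I would fix the good event: let $\Eb_t^{\delta/2}$ be the event of corollary \ref{c:max_noise}, so that $\P\big(\bigcap_{t=\tau}^{T}\Eb_t^{\delta/2}\big)\ge 1-\delta/2$ by proposition \ref{p:wt_max}, and set $\zeta_t:=\alpha_t\InnerProd{\nabla F(w_t)}{\delta m_t}\Indicator{\Eb_t^{\delta/2}}$. Since $\Eb_t^{\delta/2}\in\Fa_t$ and $\CondExp{\delta m_t}{\Fa_t}=0$, the $\zeta_t$ form a martingale difference sequence. On $\Eb_t^{\delta/2}$, proposition \ref{p:F_inherits_L} (so $\nabla F$ is $(\gamma,L)$-H\"older) together with $\nabla F(w_*)=0$ gives $\|\nabla F(w_t)\|\le L\|w_t-w_*\|^{\gamma}\le L(K_6\log\frac{2T}{\delta})^{\gamma/2}$, while lemma \ref{l:lsg_sb_ell}.\ref{i:l:lsg_sb_ell:dmt} provides the deterministic bound $\|\delta m_t\|^2\le 6L^2\|w_t-w_*\|^{2\gamma}+a_4\lesssim(\log\frac{2T}{\delta})^{\gamma}$. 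Using $\alpha_t\le\alpha_\tau$ for $t\ge\tau$, Cauchy--Schwarz then yields an almost sure bound $|\zeta_t|\le R:=b_0\,\alpha_\tau(\log\frac{2T}{\delta})^{\gamma}$ with $b_0$ depending only on $L,\gamma,\beta$ (through $K_6$ and the constants of lemma \ref{l:lsg_sb_ell}). For the predictable quadratic variation I would keep $\|\nabla F(w_t)\|^2$ intact and bound only the noise factor, exploiting that $\|\delta m_t\|^2$ is bounded deterministically on $\Eb_t^{\delta/2}$:
\[
\CondExp{\zeta_t^2}{\Fa_t}\le \alpha_t^2\|\nabla F(w_t)\|^2\,\CondExp{\|\delta m_t\|^2\Indicator{\Eb_t^{\delta/2}}}{\Fa_t}
\le c_0'\,\alpha_t^2(\log\tfrac{2T}{\delta})^{\gamma}\|\nabla F(w_t)\|^2
\le c_0'\,\alpha_\tau(\log\tfrac{2T}{\delta})^{\gamma}\,\alpha_t\|\nabla F(w_t)\|^2,
\]
whence $\sum_{t=\tau}^{T}\CondExp{\zeta_t^2}{\Fa_t}\le c_0'\,\alpha_\tau(\log\frac{2T}{\delta})^{\gamma}\sum_{t=\tau}^{T}\alpha_t\|\nabla F(w_t)\|^2$, with $c_0'=c_0'(L,\gamma,\beta)$.

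Next I would invoke Bernstein's inequality in the form \cite[lemma 21.b]{lei2018}: for any $q>0$, with probability at least $1-\delta/2$,
\[
\sum_{t=\tau}^{T}\zeta_t \le \frac{e^q-q-1}{q}\cdot\frac{1}{R}\sum_{t=\tau}^{T}\CondExp{\zeta_t^2}{\Fa_t} + \frac{R}{q^2}\log\frac{2}{\delta}
\le \frac{e^q-q-1}{q}\cdot\frac{c_0'}{b_0}\sum_{t=\tau}^{T}\alpha_t\|\nabla F(w_t)\|^2 + \frac{b_0}{q^2}\,\alpha_\tau\Big(\log\frac{2T}{\delta}\Big)^{1+\gamma},
\]
using $\log\frac{2}{\delta}\le\log\frac{2T}{\delta}$. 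Since $b_0,c_0'$ are independent of $t,\tau,T,\delta$ and of the step sizes, I may fix $q>0$ once and for all with $\frac{e^q-q-1}{q}\cdot\frac{c_0'}{b_0}\le 1$ (possible because $(e^q-q-1)/q\to 0$ as $q\to 0$). Intersecting this Bernstein event with $\bigcap_{t=\tau}^{T}\Eb_t^{\delta/2}$ — total probability at least $1-\delta$ — gives $\zeta_t=\alpha_t\InnerProd{\nabla F(w_t)}{\delta m_t}$ for every $t\in[\tau,T]$, hence $\sum_{t=\tau}^{T}\alpha_t\InnerProd{\nabla F(w_t)}{\delta m_t}\le\sum_{t=\tau}^{T}\alpha_t\|\nabla F(w_t)\|^2+\frac{b_0}{q^2}\alpha_\tau(\log\frac{2T}{\delta})^{1+\gamma}$; substituting into the decomposition above makes the descent term cancel the first summand, yielding the claim with $K_{10}=b_0/q^2$. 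I expect the crux to be the variance estimate of the previous paragraph: one must arrange $\CondExp{\zeta_t^2}{\Fa_t}$ to be proportional to $\alpha_\tau\cdot\alpha_t\|\nabla F(w_t)\|^2$, so that dividing by the almost sure bound $R\propto\alpha_\tau$ leaves a constant multiple of $\sum_{t=\tau}^{T}\alpha_t\|\nabla F(w_t)\|^2$ whose constant can be driven below $1$; this is what dictates both the split $\alpha_t^2=\alpha_\tau\cdot\alpha_t$ and the choice to majorize $\|\delta m_t\|$ (rather than $\|\nabla F(w_t)\|$) by its deterministic bound on the good event.
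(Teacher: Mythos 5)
Your proposal is correct and follows essentially the same route as the paper's proof: the same split into $-\alpha_t\|\nabla F(w_t)\|^2$ plus the martingale $\alpha_t\InnerProd{\nabla F(w_t)}{\delta m_t}$, truncation on the event of corollary \ref{c:max_noise}/proposition \ref{p:wt_max}, the almost-sure and conditional-variance bounds via lemma \ref{l:lsg_sb_ell}.\ref{i:l:lsg_sb_ell:dmt} with the variance kept proportional to $\alpha_\tau\,\alpha_t\|\nabla F(w_t)\|^2$, and Bernstein's inequality \cite[lemma 21.b]{lei2018} with $q$ chosen small so the descent term absorbs the variance piece. The only differences are immaterial bookkeeping (your a.s. bound carries $(\log\frac{2T}{\delta})^{\gamma}$ versus the paper's cruder $(\log\frac{2T}{\delta})^{2\gamma}$, and $b_0/q^2$ versus $b/q$ in the final constant), which do not change the argument.
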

\begin{proof}
    We may decompose $-\alpha_t \InnerProd{\nabla F(w_t)}{\nabla\ell(Z_t, w_t)} = -\alpha_t \|\nabla F(w_t)\|^2 + \psi_t$ where we define
    \begin{equation}
        \psi_t' = \psi_t \Indicator{\Eb_t^{\delta/2}} ~,~ \psi_t = \alpha_t \InnerProd{\nabla F(w_t)}{\delta m_t}.
        \label{eq:l:sum_gradF_mt:psi}
    \end{equation}
    By Cauchy-Schwarz, $\gamma$-H\"{o}lder property of $\nabla F$, lemma \ref{l:lsg_sb_ell}.\ref{i:l:lsg_sb_ell:dmt}, for all $t \in [\tau, T]$,
    \begin{equation}
        \begin{aligned}
            |\psi_t'| &\leq \Indicator{\Eb} \cdot \alpha_t \cdot \|\nabla F(w_t)\| \cdot \|\delta m_t\|
            \\
            &\leq L \Indicator{\Eb} \cdot \alpha_t  \sqrt{ 6 L^2 \|w_t - w_*\|^{4\gamma} + a_4 \|w_t - w_*\|^{2\gamma} }
            \\
            &\leq L \sqrt{K_6^{\gamma} (6 L^2 K_6^{\gamma} + a_4)} \cdot \alpha_{\tau} \cdot \left( \log \frac{2T}{\delta} \right)^{2\gamma}.
        \end{aligned}
        \label{eq:l:sum_gradF_mt:psi_L1}
    \end{equation}
    We apply Cauchy-Schwarz and lemma \ref{l:lsg_sb_ell}.\ref{i:l:lsg_sb_ell:dmt} to get
    \begin{equation}
        \begin{aligned}
            \CondExp{(\psi_t')^2}{\Fa_t} &\leq \alpha_t^2 \cdot \|\nabla F(w_t)\|^2 \cdot \CondExp{\|\delta m_t\|^2 \cdot \Indicator{\Eb_t^{\delta/2}}}{\Fa_{t}}
            \\
            &\leq \alpha_\tau \cdot \alpha_t \, \|\nabla F(w_t)\|^2 \cdot ( 6 L^2 \|w_t - w_*\|^{2\gamma} \Indicator{\Eb_t^{\delta/2}}  + a_4 )
            \\
            &\leq \alpha_{\tau} \cdot \alpha_t \, \|\nabla F(w_t)\|^2 \cdot (6 L^2 + a_4)
            \cdot \left( \log \frac{2T}{\delta} \right)^{\gamma}.
        \end{aligned}
        \label{eq:l:sum_gradF_mt:psi_L2}
    \end{equation}
    For shorthand notation, denote by $b = L \sqrt{ K_6^{\gamma} (6 L^2 K_6 + a_4) }$ and $c = 6 L^2 + a_4$ as the constants appearing on the right hand side of (\ref{eq:l:sum_gradF_mt:psi_L1}) and (\ref{eq:l:sum_gradF_mt:psi_L2}). By Bernstein's inequality \cite[lemma 21.b]{lei2018}, for any $q > 0$ and $\delta \in (0, 1)$, there exists an event $\Eb' \in \Fa_T$ with $\Prob(\Eb') \geq 1 - \frac{\delta}{2}$ where $\Eb'$ is the event
    \begin{equation}
        \sum_{t=\tau}^{T} \psi_t' \leq \frac{e^q-q-1}{q} \cdot \frac{c}{b} \left( \sum_{t=\tau}^{T} \alpha_t \, \|\nabla F(w_t)\|^2 \right) + \alpha_{\tau} \cdot \frac{b}{q} \left( \log \frac{2T}{\delta} \right)^{1+\gamma}.
        \label{eq:t:hp:sum_psi}
    \end{equation}
    Since $(e^q - q - 1)/q \rightarrow 0$ as $q \rightarrow 0$, may choose $q > 0$ such that
    \begin{equation*}
        \frac{e^q - q - 1}{q} \cdot \frac{c}{b} < 1.
    \end{equation*}
    Let $\Eb \in \Fa_T$ be the event in the conclusion of proposition \ref{p:wt_max}, except with $\delta$ replaced by $\delta/2$. Under the event $\Eb \cap \Eb'$, we know $\psi_t' = \psi_t$ and the following occurs with probability $\Prob(\Eb \cap \Eb') \geq 1-\delta$
    \begin{equation*}
        \begin{aligned}
            \sum_{t=\tau}^{T} \InnerProd{\nabla F(w_t)}{-\alpha_t &\nabla \ell(Z_t, w_t)} = \sum_{t=\tau}^{T} \left( -\alpha_t \, \|\nabla F(w_t)\|^2 + \psi_t' \right)
            \\
            &\leq - \left( \left(1 - \frac{e^q-q-1}{q} \cdot \frac{c}{b} \right) \sum_{t=\tau}^{T}  \alpha_t \, \|\nabla F(w_t)\|^2 \right) + \frac{b}{q} \cdot \alpha_\tau \left(\log \frac{2T}{\delta} \right)^{1+\gamma}
            \\
            &\leq \frac{b}{q} \cdot \alpha_{\tau} \left( \log \frac{2T}{\delta} \right)^{1+\gamma}.
        \end{aligned}
    \end{equation*}
    The proof is complete.
\end{proof}

Starting from this point, we will assume that the step-sizes are ``polynomially" bounded as in \cref{t:risk_hp}. Therefore, per \cref{r:thesis:sgd:step_size_polynomial}, the constants appearing in the sequel is independent of $\{\alpha_t : t \geq 0\}$.

\begin{proposition}
    With the notation in lemma \ref{l:conc_1}, if all assumptions of proposition \ref{p:wt_max} holds, $\beta \in [0, 1)$, and there exists $c_1, c_2 \in \R_+$ such that $c_1 \, t^{-p} \leq \alpha_t \leq c_2 \, t^{-p}$ where $p \in (\frac{1}{2}, 1)$, then
    there exists a constant $K_{11}(L,\beta,\gamma,w_0,\|\alpha_t\|_{\ell^2(\N)}) > 0$ such that for any $\delta \in (0, 1)$, $T > 1$, and $\tau \in [1,T]$,
    \begin{equation*}
        \Prob \left[ \sum_{t=\tau}^{T} \|v_{t+1}\|^2 \leq K_{11} \, \left( \log  \frac{3T}{\delta} \right)^2 \max \left( \tau^{-2p+1},\,  \beta^{2\tau} \right) \right] \geq 1 - \delta.
    \end{equation*}
    \label{p:sum_vt}
\end{proposition}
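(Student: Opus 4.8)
The plan is to convert the one-step recursion for $\|v_t\|^2$ coming from (\ref{eq:shb3}) into a telescoped bound on the whole partial sum, control the resulting deterministic and martingale pieces on a single high-probability event, and verify that every piece is dominated by $\tau^{-2p+1}\,(\log\tfrac{3T}{\delta})^2$. Concretely, from $v_{t+1}=\beta v_t-\alpha_t\nabla\ell(Z_t,w_t)$ one gets $\|v_{t+1}\|^2=\beta^2\|v_t\|^2+r_t$ with $r_t:=-2\beta\alpha_t\InnerProd{\nabla\ell(Z_t,w_t)}{v_t}+\alpha_t^2\|\nabla\ell(Z_t,w_t)\|^2$. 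Summing over $t=\tau,\dots,T$, using $\sum_{t=\tau}^{T}\|v_t\|^2=\|v_\tau\|^2+\sum_{t=\tau}^{T}\|v_{t+1}\|^2-\|v_{T+1}\|^2$ and rearranging yields the clean inequality $(1-\beta^2)\sum_{t=\tau}^{T}\|v_{t+1}\|^2\le \beta^2\|v_\tau\|^2+\sum_{t=\tau}^{T}r_t$. I would then write $\nabla\ell(Z_t,w_t)=\nabla F(w_t)-\delta m_t$ in $r_t$ and invoke convexity of $F$ (so $\InnerProd{\nabla F(w_t)}{w_t-w_{t-1}}\ge F(w_t)-F(w_{t-1})$) to get $r_t\le 2\beta\alpha_t\big(F(w_{t-1})-F(w_t)\big)+2\beta\alpha_t\InnerProd{\delta m_t}{v_t}+\alpha_t^2\|\nabla\ell(Z_t,w_t)\|^2$.

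Next I would fix the good event: the intersection of the event of proposition \ref{p:wt_max} with tolerance $\delta/3$ (so that corollary \ref{c:max_noise} makes $\|w_t-w_*\|^2,\|v_t\|^2,F(w_t),\|\delta m_t\|^2,\|\nabla\ell(Z_t,w_t)\|^2$ all $O\big((\log\tfrac{3T}{\delta})^{\gamma}\big)$ uniformly for $t\le T$) with the event of lemma \ref{l:sum_dmt_vt} with tolerance $\delta/3$; this has probability at least $1-\delta$. On this event I estimate the three sums: (i) Abel summation with $\alpha_{t+1}\le\alpha_t$ and $F\ge 0$ gives $\sum_{t=\tau}^{T}\alpha_t(F(w_{t-1})-F(w_t))\le\alpha_\tau F(w_{\tau-1})\lesssim\tau^{-p}(\log\tfrac{3T}{\delta})^{\gamma}$; (ii) lemma \ref{l:sum_dmt_vt} gives $\sum_{t=\tau}^{T}\alpha_t\InnerProd{\delta m_t}{v_t}\lesssim\max\big(\alpha_{\tau-1},\sqrt{\sum_{t\ge\tau-1}\alpha_t^4},\sum_{t\ge\tau-1}\alpha_t^3\big)(\log\tfrac{3T}{\delta})^2\lesssim\tau^{-p}(\log\tfrac{3T}{\delta})^2$, where the last bound uses $c_1t^{-p}\le\alpha_t\le c_2t^{-p}$ and $p\ge\tfrac12$ (so $\sqrt{\sum_{t\ge\tau}t^{-4p}}\lesssim\tau^{-2p+1/2}$ and $\sum_{t\ge\tau}t^{-3p}\lesssim\tau^{-3p+1}$ are both $\le\tau^{-p}$); (iii) the uniform noise bound and $\sum_{t\ge\tau}\alpha_t^2\lesssim\tau^{-2p+1}$ (summable since $2p>1$; the residual for $\tau=1$ is controlled by $\|\alpha_t\|_{\ell^2(\N)}^2$) give $\sum_{t=\tau}^{T}\alpha_t^2\|\nabla\ell(Z_t,w_t)\|^2\lesssim\tau^{-2p+1}(\log\tfrac{3T}{\delta})^{\gamma}$. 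For the leftover term I unroll (\ref{eq:shb3}) with $v_1=0$ to get $v_\tau=-\sum_{s=1}^{\tau-1}\beta^{\tau-1-s}\alpha_s\nabla\ell(Z_s,w_s)$, and with the uniform noise bound and $\alpha_s\le c_2s^{-p}$, splitting $\sum_{s=1}^{\tau-1}\beta^{\tau-1-s}s^{-p}$ at $s\approx\tau/2$ yields $\|v_\tau\|^2\lesssim(\log\tfrac{3T}{\delta})^{\gamma}(\tau^{-2p}+\beta^{\tau})$.

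Collecting all pieces and using $\tau^{-p}\le\tau^{-2p+1}$ (valid since $p\le 1$), $\tau^{-2p}\le\tau^{-2p+1}$, $\beta^{\tau}\le C_{\beta,p}\,\tau^{-(2p-1)}$ (exponential decay dominates the polynomial), and $(\log\tfrac{3T}{\delta})^{\gamma}\le(\log\tfrac{3T}{\delta})^2$, every term is $\lesssim\tau^{-2p+1}(\log\tfrac{3T}{\delta})^2$; absorbing all constants (which depend only on $L,\beta,\gamma,w_0$ and, via $\sum\alpha_t^2$, on $\|\alpha_t\|_{\ell^2(\N)}$) into $K_{11}$ gives $\sum_{t=\tau}^{T}\|v_{t+1}\|^2\le K_{11}(\log\tfrac{3T}{\delta})^2\tau^{-2p+1}\le K_{11}(\log\tfrac{3T}{\delta})^2\max(\tau^{-2p+1},\beta^{2\tau})$ on an event of probability at least $1-\delta$. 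The main obstacle is the sign-indefinite martingale term $\InnerProd{\delta m_t}{v_t}$: it cannot be bounded pathwise and must be routed through the Bernstein-type estimate of lemma \ref{l:sum_dmt_vt} (hence the high-probability nature of the conclusion), and one must verify carefully that the three maxima appearing there, together with the step-size tail $\sum_{t\ge\tau}\alpha_t^2$ and the geometric–polynomial sum defining $\|v_\tau\|^2$, are all genuinely $O(\tau^{-2p+1})$ under the two-sided assumption $c_1t^{-p}\le\alpha_t\le c_2t^{-p}$; the $\beta^{2\tau}$ appearing in the statement is merely a convenient weaker form of what is really a $\tau^{-2p+1}$ bound with a $\beta$-dependent constant.
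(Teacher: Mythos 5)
Your proof is correct, but it takes a genuinely different route from the paper's. The paper solves the recursion for $\|v_t\|^2$ explicitly via the discrete variation-of-constants formula (proposition \ref{p:ode}), exchanges the order of the resulting double sum, and treats a recent-past block (which yields $\tau^{-2p+1}$ after bounding ratios $(\tau/(\tau-s))^{2p-1}$ — this is where the two-sided bound $c_1 t^{-p} \leq \alpha_t \leq c_2 t^{-p}$ and a dominated-convergence argument enter) and a far-past block, which is only bounded by a constant and is responsible for the $\beta^{2\tau}$ term in the statement; along the way it invokes the martingale bound of lemma \ref{l:sum_dmt_vt} at the shifted starting indices $\tau-s+1$ for every $s \leq \tau$. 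You instead telescope the recursion once to get $(1-\beta^2)\sum_{t=\tau}^{T}\|v_{t+1}\|^2 \leq \beta^2\|v_\tau\|^2 + \sum_{t=\tau}^{T} r_t$, apply lemma \ref{l:sum_dmt_vt} a single time at the index $\tau$, and absorb all memory of the far past into the initial term $\|v_\tau\|^2$, which you control by unrolling $v_\tau = -\sum_{s<\tau}\beta^{\tau-1-s}\alpha_s\nabla\ell(Z_s,w_s)$ on the corollary \ref{c:max_noise} event and splitting the geometric–polynomial sum at $s\approx\tau/2$. This buys several things: you obtain the strictly stronger bound $K_{11}(\log\frac{3T}{\delta})^2\,\tau^{-2p+1}$ (your observation that $\beta^{2\tau}$ is redundant once $\beta$-dependent constants are allowed is correct), you never use the lower bound $c_1 t^{-p}\leq\alpha_t$, you avoid the double-sum exchange, and you need the high-probability martingale estimate only at one starting index rather than at $\tau$ shifted ones (the latter, taken literally, calls for a union bound over $s$, which your argument sidesteps). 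The small blemishes — $\alpha_\tau$ versus $\alpha_{\tau-1}$ in the Abel-summation step, $\log\frac{9T}{\delta}$ versus $\log\frac{3T}{\delta}$ from the $\delta/3$ tolerances, and the intermediate factor $\beta^{\tau}\tau^{2-2p}$ before absorbing it into $C(\beta,p)\,\tau^{-2p+1}$ — are all immaterial and absorbed into $K_{11}$.
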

\begin{proof}
    By lemma \ref{l:sum_dmt_vt} and proposition \ref{p:wt_max}, there exists an event $\Eb_T' \in \Fa_T$ such that with probability $1-\delta$ both the conclusion in lemma \ref{l:sum_dmt_vt} and proposition \ref{p:wt_max} holds. We have the recursion $\|v_{t+1}\|^2 = \beta^2 \|v_t\|^2 - 2 \beta \alpha_t \InnerProd{\nabla \ell(Z_t, w_t)}{v_t} + \alpha_t^2 \, \|\nabla \ell(Z_t, w_t)\|^2$.

    \textbf{Case $\beta = 0$.} If $\beta = 0$, then $\|v_{t+1}\|^2 = \alpha_t^2 \|\nabla \ell(Z_t, w_t)\|^2$. Under the event $\Eb_T'$, then by corollary \ref{c:max_noise}, we have
    \begin{equation*}
        \sum_{t=\tau}^{T} \|v_{t+1}\|^2 = \sum_{t=\tau}^{T} \alpha_t^2 \, \|\nabla \ell(Z_t, w_t)\|^2 \lesssim \left( \log \frac{T}{\delta}\right)^{\gamma} \sum_{t=\tau}^{T} \alpha_t^2 \leq \left( \log \frac{T}{\delta}\right)^{\gamma} \tau^{-2p+1}.
    \end{equation*}

    \textbf{Case $\beta \in (0, 1)$.} The recursion of $\|v_t\|^2$ in the previous paragraph can be rewritten into an integral equation by proposition \ref{p:ode}:
    \begin{equation*}
        \begin{cases}
            \|v_{t+1}\|^2 = \sum_{s=0}^{t-1} \beta^{2s} X_{t-s}.
            \\
            X_t = -2 \beta \alpha_t \InnerProd{\nabla \ell(Z_t, w_t)}{v_t} + \alpha_t^2 \|\nabla \ell(Z_t, w_t)\|^2.
        \end{cases}
    \end{equation*}
    It follows that
    \begin{equation}
        \begin{aligned}
            \sum_{t=\tau}^{T} \|v_{t+1}\|^2 &= \sum_{t=\tau}^{T} \sum_{s=1}^{t} \beta^{2(s-1)} X_{t-s+1}
            \\
            &= \left( \sum_{s=1}^{\tau} \sum_{t=\tau}^{T} + \sum_{s=\tau}^{T} \sum_{t=s}^{T} \right) \beta^{2(s-1)} X_{t-s+1}
            \\
            &= \sum_{s=1}^{\tau} \beta^{2 (s-1)} \left( \sum_{t=\tau}^{T} X_{t-s+1} \right) + \beta^{2 (\tau-1)} \sum_{s=0}^{T-\tau} \beta^{2s} \left( \sum_{t=1}^{T-\tau-s+1} X_t \right).
        \end{aligned}
        \label{eq:p:sum_vt:0}
    \end{equation}
    We estimate the parentheses in the first term. By lemma \ref{l:lsg_sb_ell}.\ref{i:l:lsg_sb_ell:nabla_ell_w} and the convexity of $F$, we get
    \begin{equation}
        \begin{aligned}
            X_t &= -2 \beta \, \alpha_t \, \InnerProd{\nabla F(w_t)}{v_t} + 2 \, \alpha_t \, \beta \InnerProd{\delta m_t}{v_t} + \alpha_t^2 \, \|\nabla \ell(Z_t, w_t)\|^2
            \\
            &\leq 2 \beta \alpha_t \left( F(w_{t-1}) - F(w_t) + \InnerProd{\delta m_t}{v_t} \right) + \alpha_t^2 \, (2L^2 \|w_t - w_*\|^{2\gamma} + a_5)
            \\
            &\leq 2 \beta (\alpha_{t-1} F(w_{t-1}) - \alpha_t F(w_t)) + 2 \beta \alpha_t \InnerProd{\delta m_t}{v_t} + \alpha_t^2 \, (2L^2 \|w_t - w_*\|^{2\gamma} + a_5).
        \end{aligned}
        \label{eq:p:sum_vt:xt}
    \end{equation}
    Using corollary \ref{c:max_noise} and $\alpha_t = \Theta(t^{-p})$ with $p \in (\frac{1}{2}, 1)$, the following holds in $\Eb_T'$
    \begin{equation}
        \begin{aligned}
            \sum_{t=\tau}^{T} X_{t-s+1} &\lesssim \alpha_{\tau-s} F(w_{\tau-s}) + \sum_{t=\tau-s+1}^{T-s+1} \alpha_t \InnerProd{\delta m_t}{v_t} + \left( \log \frac{2T}{\delta} \right)^{\gamma} \sum_{s \leq t=\tau}^{T} \alpha_{t-s+1}^2
            \\
            &\lesssim \left( \log \frac{3T}{\delta} \right)^2 \left( 2 \alpha_{\tau-s} + (\tau-s+1)^{-2p+1} \right)
            \\
            &\lesssim \left( \log \frac{3T}{\delta} \right)^2 (\tau-s)^{-2p+1}.
        \end{aligned}
        \label{eq:p:sum_vt:sum_xt}
    \end{equation}
    Therefore, under the event $\Eb_T'$,
    \begin{equation*}
        \sum_{s=1}^{\tau} \sum_{t=\tau}^{T} \beta^{2(s-1)} X_{t-s+1} \lesssim \left( \log \frac{3T}{\delta} \right)^2 \tau^{-2p+1} \sum_{s=1}^{\tau} \beta^{2(s-1)} \left( \frac{ \tau }{ \tau-s } \right)^{2p-1}.
        \label{eq:p:sum_vt:1}
    \end{equation*}
    Now, the ratio on the right hand side converges to 1 as $\tau \uparrow \infty$. As a consequence, the series $\sum_{s=1}^{\tau} \beta^{2(s-1)} (\tau/(\tau-s))^{2p-1}$ converges as $\tau \uparrow \infty$ by the dominated convergence theorem. Therefore, such a series in (\ref{eq:p:sum_vt:1}) is bounded by a constant independent of $\tau$.

    Now, examine the second term on the right hand side of (\ref{eq:p:sum_vt:0}). By lemma \ref{l:lsg_sb_ell}.\ref{i:l:lsg_sb_ell:nabla_ell_ell}, we have $\alpha_t^2 \, \|\nabla \ell(Z_t, w_t)\|^2 \leq \alpha_t^2 \,(a_1 \ell(Z_t, w_t) + a_2)$. By lemma \ref{l:conc_1}, we get $\sum \alpha_t^2 \,\ell(Z_t, w_t) < K_2$. Therefore, using the expression for $X_t$ in (\ref{eq:p:sum_vt:xt}), under the event $\Eb_T'$,
    \begin{equation*}
        \sum_{t=1}^{T-\tau-s+1} X_t \lesssim F(w_0) + \left( \log \frac{3T}{\delta} \right)^2 \left( \max \left( 1, \sqrt{\sum_{t=1}^{\infty} \alpha_t^4}, \sum_{t=1}^{\infty} \alpha_t^3 \right) + \left( a_1 \, K_2 + a_2 \sum_{t=0}^{\infty} \alpha_t^2 \right) \right).
    \end{equation*}
    Therefore, the claim is proved.
\end{proof}

\begin{proposition}
    With the notation in lemma \ref{l:conc_1}, if all the assumptions in proposition \ref{p:sum_vt} holds and $\gamma = 1$, then there exists a constant $K_{12}(L,\beta,\gamma,w_0,\|\alpha_t\|_{\ell^2(\N)}) > 0$ such that for any $\delta \in (0, 1)$, $T > 1$, and $\tau \in (1,T]$,
    \begin{equation*}
        \Prob \left[ \sum_{t=\tau}^{T} \InnerProd{\nabla F(w_t)}{v_t} \leq K_{12} \, \left( \log  \frac{3T}{\delta} \right)^2 \max \left( \tau^{-2p+1},\,  \beta^{2\tau} \right) \right] \geq 1 - \delta.
    \end{equation*}
    \label{p:sum_gradF_vt}
\end{proposition}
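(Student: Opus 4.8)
The plan is to introduce $b_t := \InnerProd{\nabla F(w_t)}{v_t}$, derive a one-step momentum recursion for it, iterate that recursion back to the trivial value $b_1 = 0$ (recall $v_1 = 0$), and then sum over the window $[\tau,T]$ in a way that exposes only the quantities already controlled earlier: the tail weighted-gradient sum of Lemma \ref{l:sum_gradF_mt}, the tail sum $\sum\|v_t\|^2$ of Proposition \ref{p:sum_vt}, and the ``good set'' of Proposition \ref{p:wt_max} and Corollary \ref{c:max_noise}.

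First I would set up the recursion. Writing $b_t = \InnerProd{\nabla F(w_{t-1})}{v_t} + \InnerProd{\nabla F(w_t) - \nabla F(w_{t-1})}{v_t}$, the second term is at most $L\|w_t - w_{t-1}\|\,\|v_t\| = L\|v_t\|^2$, since $\gamma = 1$ makes $\nabla F$ $L$-Lipschitz (Proposition \ref{p:F_inherits_L}) and $v_t = w_t - w_{t-1}$; for the first term I substitute $v_t = \beta v_{t-1} - \alpha_{t-1}\nabla\ell(Z_{t-1},w_{t-1})$ from (\ref{eq:shb3}) and $\nabla\ell(Z_{t-1},w_{t-1}) = \nabla F(w_{t-1}) - \delta m_{t-1}$, which gives $\InnerProd{\nabla F(w_{t-1})}{v_t} = \beta b_{t-1} - \alpha_{t-1}\|\nabla F(w_{t-1})\|^2 + \alpha_{t-1}\InnerProd{\nabla F(w_{t-1})}{\delta m_{t-1}}$. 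Hence $b_t \le \beta b_{t-1} + X_t$ for $t\ge 2$, with $X_t := -\alpha_{t-1}\|\nabla F(w_{t-1})\|^2 + \alpha_{t-1}\InnerProd{\nabla F(w_{t-1})}{\delta m_{t-1}} + L\|v_t\|^2$; since $b_1 = 0$, induction (or Proposition \ref{p:ode}) yields $b_t \le \sum_{s=2}^t \beta^{t-s}X_s$, and summing over $t\in[\tau,T]$ and exchanging the order of summation gives $\sum_{t=\tau}^T b_t \le \sum_{s=\tau}^T c_sX_s + \sum_{s=2}^{\tau-1} c_sX_s$ with $c_s = \sum_{t=\max(s,\tau)}^T\beta^{t-s}$, so that $c_s = (1-\beta)^{-1}(1-\beta^{T-s+1})$ for $s\ge\tau$ and $c_s \le (1-\beta)^{-1}\beta^{\tau-s}$ for $s<\tau$.

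Next I would estimate the two blocks. For $s\ge\tau$, splitting $c_s = (1-\beta)^{-1} - (1-\beta)^{-1}\beta^{T-s+1}$, the constant-coefficient part $(1-\beta)^{-1}\sum_{s=\tau}^T X_s$ decomposes (after re-indexing $u=s-1$) into $\sum_{u=\tau-1}^{T-1}\InnerProd{\nabla F(w_u)}{-\alpha_u\nabla\ell(Z_u,w_u)}$, which Lemma \ref{l:sum_gradF_mt} bounds by $\lesssim\alpha_{\tau-1}(\log\tfrac{2T}{\delta})^2$, plus $L\sum_{s=\tau}^T\|v_s\|^2$, which Proposition \ref{p:sum_vt} bounds by $\lesssim(\log\tfrac{3T}{\delta})^2\tau^{-2p+1}$; since $p<1$ we have $\alpha_{\tau-1}\lesssim\tau^{-p}\le\tau^{-2p+1}$, so this part is $\lesssim(\log\tfrac{3T}{\delta})^2\tau^{-2p+1}$. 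The remaining part $-(1-\beta)^{-1}\sum_{s=\tau}^T\beta^{T-s+1}X_s$ I would bound in absolute value: on the good event of Proposition \ref{p:wt_max} one has $\|\nabla F(w_{s-1})\|^2,\ \|\delta m_{s-1}\|\lesssim\log\tfrac{T}{\delta}$, hence $|X_s|\lesssim\alpha_{s-1}\log\tfrac{T}{\delta} + L\|v_s\|^2$, and since $\alpha_t$ is non-increasing and $\sum_{s=\tau}^T\beta^{T-s+1}\le\beta/(1-\beta)$, Proposition \ref{p:sum_vt} again gives $\lesssim(\log\tfrac{3T}{\delta})^2\tau^{-2p+1}$. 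For $s<\tau$, dropping the non-positive term of $X_s$ and using $c_s\le(1-\beta)^{-1}\beta^{\tau-s}$ on the good event gives $\sum_{s=2}^{\tau-1}c_sX_s\lesssim\sum_{s=2}^{\tau-1}\beta^{\tau-s}\big(\alpha_{s-1}\log\tfrac{T}{\delta} + L\|v_s\|^2\big)$; splitting the range at $s=\lceil\tau/2\rceil$, for $s\ge\tau/2$ I use $\beta^{\tau-s}\le1$, $\alpha_{s-1}\lesssim\tau^{-p}\le\tau^{-2p+1}$, and Proposition \ref{p:sum_vt} applied at index $\approx\tau/2$ (noting $(\tau/2)^{-2p+1}\lesssim\tau^{-2p+1}$), while for $s<\tau/2$ I use $\beta^{\tau-s}\le\beta^{\tau/2}\lesssim\tau^{-2p+1}$ together with Proposition \ref{p:sum_vt} at index $1$. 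Finally a union bound over the finitely many events involved, each at tolerance of order $\delta$, absorbs all logarithmic factors into $\log\tfrac{3T}{\delta}$ up to absolute constants, and since $\tau^{-2p+1}\le\max(\tau^{-2p+1},\beta^{2\tau})$ this gives the stated bound with $K_{12}$ depending only on $L,\beta,\gamma,w_0,\|\alpha_t\|_{\ell^2(\N)}$ (small-$\tau$ and small-$T$ edge cases being handled by enlarging $K_{12}$).

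The hard part will be ensuring that every piece genuinely decays at the rate $\tau^{-2p+1}$ rather than merely staying of size $O((\log\tfrac{T}{\delta})^2)$. Iterating the recursion all the way down to $b_1 = 0$ is what removes the boundary term $b_{\tau-1}$, for which there is no pointwise decay estimate; and in the tail block one cannot use the crude martingale bound $\sum_{s\ge\tau}\alpha_{s-1}|\InnerProd{\nabla F(w_{s-1})}{\delta m_{s-1}}|\lesssim\log\tfrac{T}{\delta}\sum_{s\ge\tau}\alpha_{s-1}$ (which grows like $T^{1-p}$), nor route through Proposition \ref{p:sum_weighted_risk} (which only gives $O((\log\tfrac{T}{\delta})^{3/2})$), so the negative $-\alpha_{s-1}\|\nabla F(w_{s-1})\|^2$ term must be kept precisely so that Lemma \ref{l:sum_gradF_mt} can cancel the martingale. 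The analogous subtlety in the head block is that the dominant contributions come from $s$ close to $\tau$, where $\beta^{\tau-s}\approx1$, so one needs the \emph{localized} control of $\sum_{s\approx\tau}\|v_s\|^2$ from Proposition \ref{p:sum_vt}, not just a global $\ell^1$ bound on $\|v_t\|^2$.
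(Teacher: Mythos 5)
Your proposal is correct and follows essentially the same route as the paper: the identical recursion $\InnerProd{\nabla F(w_t)}{v_t} \leq \beta\,\InnerProd{\nabla F(w_{t-1})}{v_{t-1}} + \Lambda_{t-1} + L\|v_t\|^2$ iterated from $v_1 = 0$ (proposition \ref{p:ode}), followed by exchanging the order of summation and controlling the pieces with lemma \ref{l:sum_gradF_mt} (keeping the negative $\alpha_t\|\nabla F(w_t)\|^2$ term to cancel the martingale), proposition \ref{p:sum_vt}, and the good event of proposition \ref{p:wt_max}/corollary \ref{c:max_noise}. The only differences are cosmetic bookkeeping of the double sum (your split at the original index and at $\tau/2$ versus the paper's lag-indexed split and dominated-convergence bound on $\sum_s \beta^s (\tau/(\tau-s-1))^p$), which do not change the argument.
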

\begin{proof}
    Fix $T > 1$ and $\tau > 1$. Let $\Lambda_t = \InnerProd{ \nabla F(w_t) }{ -\alpha_t \nabla \ell(Z_t, w_t) }$. By lemma \ref{l:sum_gradF_mt} and proposition \ref{p:sum_vt}, there exists an event $\Eb \in \Fa$ such that with probability $\Prob(\Eb) \geq 1-\delta$ the following occurs:
    \begin{gather}
        \sum_{t=\tau}^{T} \|v_{t+1}\|^2 \leq K_{11} \cdot \max(\tau^{-2p+1}, \beta^{2 \tau}) \left( \log \frac{3T}{\delta} \right)^2 ~~~,~~~ \forall~ \tau \in [1,T].
        \\
        \sum_{t=\tau}^{T} \Lambda_t \leq K_{10} \cdot \alpha_{\tau} \left( \log \frac{2T}{\delta} \right)^{1+\gamma} ~~~,~~~ \forall~ \tau \in [0, T].
    \end{gather}
    Since $\gamma = 1$, i.e. $\nabla F$ is Lipschitz, $\InnerProd{\nabla F(w_t)}{v_t}$ satisfies the following inequality for any $t > 1$,
    \begin{equation*}
        \begin{aligned}
            \InnerProd{\nabla F(w_t)}{v_t} &= \InnerProd{\nabla F(w_t) - \nabla F(w_{t-1}) + \nabla F(w_{t-1})}{v_t}
            \\
            &= \InnerProd{\nabla F(w_{t-1})}{\beta v_{t-1} - \alpha_{t-1} \nabla \ell(Z_{t-1}, w_{t-1})} + \InnerProd{\nabla F(w_{t}) - \nabla F(w_{t-1})}{v_t}
            \\
            &= \beta \InnerProd{ \nabla F(w_{t-1})}{v_{t-1}}
            + \Lambda_{t-1} + \InnerProd{\nabla F(w_{t}) - \nabla F(w_{t-1})}{v_t}
            \\
            &\leq \beta \, \InnerProd{ \nabla F(w_{t-1})}{v_{t-1}} + \Lambda_{t-1} + \|v_t\|^2.
        \end{aligned}
    \end{equation*}
    Also, since $v_1 = 0$, we have $\InnerProd{\nabla F(w_1)}{v_1} = 0$. Using proposition \ref{p:ode} and switching the order of integration, under the event $\Eb$, the following occurs with probability $\Prob(\Eb) \geq 1-\delta$ for all $\tau > 1$:
    \begin{equation*}
        \begin{aligned}
            \sum_{t=\tau}^{T} \InnerProd{\nabla F(w_t)&}{v_t} \leq \sum_{s=0}^{\tau-2} \beta^s \left( \sum_{t=\tau}^T \Lambda_{t-s-1} + \|v_{t-s}\|^2 \right) + \sum_{s=\tau-2}^{T-2} \beta^s \left( \sum_{t=s+2}^T \Lambda_{t-s-1} + \|v_{t-s}\|^2 \right)
            \\
            &= \sum_{s=0}^{\tau-2} \beta^s \left( \sum_{t=\tau}^T \Lambda_{t-s-1} + \|v_{t-s}\|^2 \right) + \beta^\tau \sum_{s=0}^{T-\tau} \beta^{s-2} \left( \sum_{t=\tau}^T \Lambda_{t-\tau+1} + \|v_{t-\tau+2}\|^2 \right)
            \\
            &\lesssim \left( \log \frac{3T}{\delta} \right)^2 \left( \sum_{s=0}^{\tau-2} \beta^s \cdot \alpha_{\tau-s-1} + \beta^\tau \sum_{s=0}^{\infty} \beta^{s-2} \right)
            \\
            &\lesssim \max(\tau^{-2p+1}, \beta^\tau) \cdot \left( \log \frac{3T}{\delta} \right)^2 \left( \sum_{s=0}^{\tau-2} \beta^s \left( \frac{\tau}{\tau-s-1} \right)^p + \frac{1}{\beta^2(1-\beta)} \right).
        \end{aligned}
    \end{equation*}
    The summation on the right hand side remains bounded as $\tau \uparrow \infty$ by the dominated convergence theorem so that the summation term is bounded by a constant $c > 0$ independent of $\tau$. The coefficient $K_{12}$ must be of the order $K_{12} = O(c \cdot \max(K_{10}, K_{11}))$.
\end{proof}

\begin{theorem}
    If all assumptions in proposition \ref{p:wt_max} and proposition \ref{p:sum_gradF_vt} both holds, then there exists a constant $K_{13}(L,\beta, \gamma, w_0, w_*,\|\alpha_t\|_{\ell^2(\N)}) > 0$ such that for all $T > 1$, $T_0 \in (1,T]$, $\delta \in (0, 1)$
    \begin{equation*}
        \Prob \left( F(w_{T+1}) - F_* \leq K_{13} \left( \log \frac{3T}{\delta} \right)^2\, \max\left\{ \frac{1}{\sum_{t=T_0}^{T} \alpha_t} \,,\, T_0^{-2p+1} \, , \, \beta^{T_0} \right\} \right) \geq 1 - \delta.
    \end{equation*}
    \label{t:hp}
\end{theorem}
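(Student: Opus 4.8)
The plan is to run the same ``look at $F$ over a time window'' strategy outlined in \S\ref{s:main_results:2}, combining the per-step smoothness estimate with the concentration bounds already established. Start from (\ref{eq:gamma_smooth}) with $\gamma=1$, applied to $x=w_t$, $y=w_{t+1}$, and substitute $w_{t+1}-w_t=v_{t+1}=\beta v_t-\alpha_t\nabla\ell(Z_t,w_t)$ from (\ref{eq:shb3}):
\[
F(w_{t+1})-F_* \leq F(w_t)-F_* + \beta\,\InnerProd{\nabla F(w_t)}{v_t} + \InnerProd{\nabla F(w_t)}{-\alpha_t\nabla\ell(Z_t,w_t)} + \tfrac{L}{2}\,\|v_{t+1}\|^2 .
\]
Summing over $t=s,\dots,T$ telescopes the $F$-terms, giving, for every $s\in[T_0,T]$,
\[
F(w_{T+1})-F_* \leq F(w_s)-F_* + \beta\sum_{t=s}^{T}\InnerProd{\nabla F(w_t)}{v_t} + \sum_{t=s}^{T}\InnerProd{\nabla F(w_t)}{-\alpha_t\nabla\ell(Z_t,w_t)} + \tfrac{L}{2}\sum_{t=s}^{T}\|v_{t+1}\|^2 .
\]

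Next I would control the three sums on the right uniformly in $s\in[T_0,T]$ using lemma \ref{l:sum_gradF_mt}, proposition \ref{p:sum_vt}, and proposition \ref{p:sum_gradF_vt} (the last one is exactly where $\gamma=1$ is used), each invoked at tolerance $\delta/4$ and read as the uniform-in-$\tau$ statements their proofs actually establish. On the intersection of these events, using $\alpha_s\lesssim s^{-p}\leq s^{-2p+1}$ and $\beta^{2s}\leq\beta^{s}$ for $s\geq T_0>1$, and folding the harmless shift $\log(12T/\delta)\lesssim\log(3T/\delta)$ into an absolute constant, all three sums are bounded by $C_1\big(\log\tfrac{3T}{\delta}\big)^2\max(s^{-2p+1},\beta^{s})$ for a constant $C_1$ depending only on $L,\beta,\gamma,w_0,w_*,\|\alpha_t\|_{\ell^2(\N)}$. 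Since $s\mapsto\max(s^{-2p+1},\beta^{s})$ is decreasing and $s\geq T_0$, this yields
\[
F(w_{T+1})-F_* \leq F(w_s)-F_* + C_1\Big(\log\tfrac{3T}{\delta}\Big)^2\max\big(T_0^{-2p+1},\beta^{T_0}\big) \qquad\text{for all } s\in[T_0,T] .
\]

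The last step removes $F(w_s)-F_*$ by a weighted-average argument resting on proposition \ref{p:sum_weighted_risk}: at tolerance $\delta/4$, on an event of probability $\geq 1-\tfrac{\delta}{4}$ one has $\sum_{t=1}^{T}\alpha_t(F(w_t)-F_*)\leq K_7\big(\log\tfrac{3T}{\delta}\big)^2$ (absorbing the $(\cdot)^{3/2}$ into $(\cdot)^2$ since $3T/\delta>e$), so multiplying the previous display by $\alpha_s$, summing over $s=T_0,\dots,T$, and dividing by $\sum_{t=T_0}^{T}\alpha_t$ gives
\[
F(w_{T+1})-F_* \leq (K_7+C_1)\Big(\log\tfrac{3T}{\delta}\Big)^2\max\Big\{\tfrac{1}{\sum_{t=T_0}^{T}\alpha_t},\,T_0^{-2p+1},\,\beta^{T_0}\Big\}
\]
on the union of the four events, which has probability $\geq 1-\delta$; this is the claim with $K_{13}=K_7+C_1$ up to the absolute $\log$-rescaling constant. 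I expect the delicate point to be precisely this final step: bare telescoping compares $F(w_{T+1})-F_*$ only with $F$ at one uncontrolled time, and one genuinely needs to pass through the weighted sum of proposition \ref{p:sum_weighted_risk} — itself the heavy ingredient, relying on the martingale concentration of $\xi_t$ and the a.s.\ summability of $\varphi_t$ from lemma \ref{l:conc_1} — to convert it into a rate; a secondary bookkeeping burden is that the cited high-probability bounds must hold simultaneously across the whole range $[T_0,T]$ of starting indices, i.e.\ must be used as maximal inequalities over partial sums.
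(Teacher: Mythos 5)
Your proof is correct and takes essentially the same route as the paper's: smoothness-telescoping of $F$ over a window ending at $T$, with lemma \ref{l:sum_gradF_mt}, propositions \ref{p:sum_vt} and \ref{p:sum_gradF_vt} controlling the three sums and proposition \ref{p:sum_weighted_risk} supplying the weighted-average step, all used uniformly in the starting index exactly as you flag. The only (cosmetic) difference is that the paper selects a single random $\tau\in[T_0,T]$ at which $F(w_\tau)-F_*$ is at most the $\alpha$-weighted average and telescopes from that $\tau$, whereas you telescope from every $s\in[T_0,T]$ and then take the $\alpha_s$-weighted average; also note that you need the intersection, not the union, of the four $\delta/4$-events.
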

\begin{proof}
    Let $\Delta_t F = F(w_t) - F_*$. By proposition \ref{p:sum_weighted_risk}, proposition \ref{p:sum_vt}, lemma \ref{l:sum_gradF_mt}, and proposition \ref{p:sum_gradF_vt}, there exists an event $\Eb \in \Fa_T$ with $\Prob(\Eb) \geq 1 - \delta$ such that all of the following holds
    \begin{gather}
        \sum_{t=1}^{T} \alpha_t \,\Delta_t F \leq K_7 \left( \log \frac{2T}{\delta} \right)^{\frac{3}{2}}.
        \label{eq:t:hp:max}
        \\
        \sum_{t=\tau}^{T} \InnerProd{\nabla F(w_t)}{-\alpha_t \nabla \ell(Z_t, w_t)} \leq K_{10} \cdot \alpha_{\tau} \left( \log \frac{2T}{\delta} \right)^{1+\gamma}.
        \label{eq:t:hp:mt}
        \\
        \sum_{t=\tau}^{T} \left\{ \|v_{t+1}\|^2 + \InnerProd{\nabla F(w_t)}{v_t} \right\} \leq (K_{11} + K_{12}) \left( \log \frac{3T}{\delta} \right)^2 \max\left( \tau^{-2p+1}, \beta^{\tau} \right).
        \label{eq:t:hp:sum_vt}
    \end{gather}
    If we fix $T > 1$ and $T_0 \in (1,T]$, then under the event $\Eb$, there exists $\tau \in [T_0, T]$ such that
    \begin{equation*}
        \Delta_{\tau} F \leq \sum_{t=T_0}^{T} \frac{\alpha_t \,\Delta_t F}{\sum_{t=T_0}^{T} \alpha_t} \leq \frac{ K_7 \left( \log \frac{2T}{\delta} \right)^{\frac{3}{2}} }{ \sum_{t=T_0}^{T} \alpha_t }
    \end{equation*}
    where the middle term is the weighted average of $\Delta_t F$ from $\tilde{t}$ to $T$. Using $(\gamma, L)$-smoothness of $F$ along with estimates (\ref{eq:t:hp:max}), (\ref{eq:t:hp:mt}), and (\ref{eq:t:hp:sum_vt}), on the event $\Eb_1$, we have
    \begin{equation}
        \begin{aligned}
            \Delta_{T+1} F &\leq \Delta_{\tau} F + \sum_{t=\tau}^{T} \InnerProd{\nabla F(w_t)}{w_{t+1} - w_t} + \frac{L}{1+\gamma} \sum_{t=\tau}^{T} \|v_{t+1}\|^{1+\gamma}
            \\
            &= \Delta_{\tau} F - \sum_{t=\tau}^{T} \bigg\{ \alpha_t \, \|\nabla F(w_t)\|^2 - \psi_t \bigg\} + \beta \sum_{t=\tau}^{T}  \InnerProd{\nabla F(w_t)}{v_t} + \frac{L}{2} \sum_{t=\tau}^{T} \|v_{t+1}\|^2.
            \\
            &\lesssim K \left( \log \frac{3T}{\delta} \right)^2 \max\left\{ \frac{1}{ \sum_{t=T_0}^{T} \alpha_t }, \alpha_\tau , \max( \tau^{-2p+1}, \beta^{\tau}) \right\}.
        \end{aligned}
        \label{eq:t:hp:F_wt}
    \end{equation}
    where
    \begin{equation*}
        K = \max\left( K_7, K_{10}, \frac{L + 2\beta}{2}  (K_{11} + K_{12}) \right).
    \end{equation*}
    The proof is complete.
\end{proof}

\begin{remark}
    Theorem \ref{t:risk_hp} follows from setting $T_0 = \lfloor T/2 \rfloor$. Observe that $\sum_{t=\lfloor T/2 \rfloor}^{T} \alpha_t \geq \alpha_T \, \lfloor \frac{T}{2} \rfloor$. Therefore, setting $T_0 = \lfloor T/2 \rfloor$ in the conclusion of theorem \ref{t:hp} yields
    \begin{equation*}
        \max \left( T^{-2p+1} , T^{p-1} , \beta^{T} \right) = O(T^{\max(p-1,-2p+1)}) ~~~,~~ p \in \left(\frac{1}{2}, 1\right).
    \end{equation*}
    Therefore, theorem \ref{t:risk_hp} follows.
\end{remark}

\appendix

\section{Proof of Proposition \ref{p:weak_RS} and Lemma \ref{l:weak_RS_rate}}

\subsection{Proof of Proposition \ref{p:weak_RS}}

\begin{proof}
    Since $X_t$ is non-negative, we may drop $X_t$ from the assumed inequality on $Y_t$ and obtain $Y_t - Y_{t-1} \leq a_{t-1} Y_{t-1} - X_{t-1} + Z_{t-1} \leq a_{t-1} Y_{t-1} + Z_{t-1}$. We may sum up both sides to obtain the recursive relation
    \begin{equation}
        Y_t \leq Y_0 + \sum_{s=1}^{t} a_{s-1} Y_{s-1} + \sum_{s=1}^{t} Z_{s-1}.
        \label{eq:p:weak_RS:yt}
    \end{equation}
    Since $Z_t \in \ell^1(\N)$, the above is a Gronwall-type inequality. By proposition \ref{p:gronwall}, we get
    \begin{equation*}
        Y_t \lesssim (Y_0 + \|Z\|_{\ell^1}) \prod_{s=1}^{\infty} (1 + a_{s-1}).
    \end{equation*}
    Since $a_t \in \ell^1$, the right hand side converges. Therefore, the first and second claim are proved. To prove the second claim, repeating the same argument to produce (\ref{eq:p:weak_RS:yt}), but without dropping $X_t$, we get
    \begin{equation*}
        \sum_{s=1}^{t} X_{s-1} \leq Y_t + \sum_{s=1}^{t} X_{s-1} \leq Y_0 + \sum_{s=1}^{t} a_{s-1} Y_{s-1} + \sum_{s=1}^{t} Z_{s-1}.
    \end{equation*}
    By the first claim, $Y_t$ is uniformly bounded. Therefore, the right hand side is finite as $t \rightarrow \infty$. The proof is complete.
\end{proof}

\subsection{Proof of Lemma \ref{l:weak_RS_rate}}

\begin{proof}
    We follow \cite[lemma 19]{liuyuan2024}. Let $r \in (0, 1)$. By the mean value theorem, there exists $\delta \in (0, 1)$ such that $(t+1)^r - t^r = r (t + \delta)^{r-1} \leq r t^{r-1}$. For shorthand notation, let $r_\gamma = \frac{\gamma}{1+\gamma}$ and $a_1, a_2 \in \R_+$ be such that $a_1 t^{-p} \leq \alpha_t \leq a_2 t^{-p}$ for all $t > 0$. For a fixed $s > 0$ and any $t \geq s$, we get
    \begin{equation}
        \begin{aligned}
            \CondExp{(t+1)^r Y_{t+1}}{\Fa_s}
            &\leq
            (1 + c_1 \, \alpha_t^{1+\gamma}) \left( 1 + r \cdot t^{-1} \right) \, \CondExp{t^r Y_t}{\Fa_s}
            \\
            &+ \alpha_t^{1+\gamma} \left( t^r + r \cdot t^{r-1} \right) \left( \CondExp{Z_t}{\Fa_s} + c_2 \right)
            \\
            \\
            &\leq (1 + c_1 \, \alpha_t^{1+\gamma}) \, \CondExp{t^r Y_t}{\Fa_s} + (1 + c_1 \, \alpha_t^{1+\gamma}) \, r \, \CondExp{t^{r - 1} Y_t}{\Fa_s}
            \\
            &+ a_2 \left( t^{-p(1+\gamma)+r} +  t^{-1-p(1+\gamma)+r} \right) \left(\CondExp{Z_t}{\Fa_s} + c_2 \right).
        \end{aligned}
        \label{eq:l:weak_RS_rate:1}
    \end{equation}
    We require that $r-1 < -p$ and $-p(1+\gamma)+r < \min(-p, -1)$ so that the second and third term on the right hand side are both summable. Since $p \in (\frac{1}{1+\gamma}, 1)$, such condition holds if and only if $r < \min( 1-p, p(1+\gamma)-1 )$. Now, applying proposition \ref{p:weak_RS} yields $\limsup_{t \rightarrow \infty} \, \CondExp{t^r \cdot Y_t}{\Fa_s} < \infty$. Note that under the same condition on $r$, we may apply expectation on both sides of (\ref{eq:l:weak_RS_rate:1}) and then apply \cref{p:weak_RS} to conclude $\sup_{t \geq 0} \, \Exp[t^r \cdot Y_t] < \infty$.

    Rearranging (\ref{eq:l:weak_RS_rate:1}) and summing up from $t=s$ to $t=T-1$ yields two inequalities:
    \begin{align}
        \CondExp{T^r \cdot Y_T}{\Fa_s} - s^r \cdot Y_s &\lesssim \Exp \left[ \sum_{t=s}^{T-1} (\alpha_t^{1+\gamma} \cdot t^r + t^{r-1}) \cdot Y_t \bigg| \Fa_s \right] + \Exp \left[ \sum_{t=s}^{T-1} t^{-p(1+\gamma)+r} Z_t \bigg| \Fa_s \right].
        \label{eq:l:weak_RS_rate:2a}
        \\
        \CondExp{T^r \cdot Y_T}{\Fa_s} - s^r \cdot Y_s &\leq \Exp \left[ \sum_{t=1}^{\infty} (\alpha_t^{1+\gamma} \cdot t^r + t^{r-1}) \cdot Y_t \bigg| \Fa_s \right] + \Exp \left[ \sum_{t=1}^{\infty} t^{-p(1+\gamma)+r} Z_t \bigg| \Fa_s \right].
        \label{eq:l:weak_RS_rate:2b}
    \end{align}
    Taking absolute value on both sides of (\ref{eq:l:weak_RS_rate:2a}), the limit $T \uparrow \infty$ (with Lebesgue's monotone convergence theorem since summands are non-negative), and then $s \uparrow \infty$ yields that the right hand side is zero. Therefore, we may conclude
    \begin{equation*}
        \lim_{s \uparrow \infty} \left( \lim_{T \uparrow \infty} \bigg| \CondExp{T^r \cdot Y_T}{\Fa_s} - s^r \cdot Y_s \bigg| \right) = 0 ~~~~\text{a.s.-}\Prob.
    \end{equation*}
    This implies that $\lim_{s \uparrow \infty} s^r \, Y_s$ is $\Fa_{\infty}$-measurable. Furthermore, since $\sum_{t=1}^{\infty} \alpha_t \, Y_t < \infty$ almost surely, by \cite[theorem 3.3.1]{knopp1956}, we have $\min_{s \leq t} Y_s = o(t^{p-1})$.

    We need to conclude $\liminf_{s \uparrow \infty} s^r Y_s = 0$ a.s.-$\Prob$. This holds if we require $Y_t > 0$ for all $t \geq 0$ (counterexample can be constructed if otherwise) assuming $t^r \min_{s \leq t} Y_s \downarrow 0$ as $t \uparrow \infty$. If $Y_t > 0$ for all $t \geq 0$, then one can take a subsequence $Y_{t_n} \downarrow 0$ decreasing to zero. For instance, one can take $\{t_n : n \in \N \}$ such that $Y_{t_n} = \min_{t \leq t_n} Y_t$. Therefore, we have $t_n^r Y_{t_n} = t_n^r \min_{t \leq t_n} Y_t$ where the RHS converges to zero. This implies that there exists a subsequence of $t^r Y_t$ converging to zero, i.e. $\liminf_{s \uparrow \infty} s^r Y_s = 0$ a.s.-$\Prob$. Therefore, taking $\liminf_{s \uparrow \infty}$ on both sides of (\ref{eq:l:weak_RS_rate:2b}) yields almost surely
    \begin{equation*}
        \begin{aligned}
            \lim_{T \uparrow \infty} T^r \cdot Y_T &= \lim_{T \uparrow \infty} \liminf_{s \uparrow \infty} \bigg( \CondExp{T^r \cdot Y_T}{\Fa_s} - s^r \cdot Y_s \bigg)
            \\
            &\leq \left( \sum_{t=1}^{\infty}  (\alpha_t^{1+\gamma} \cdot t^r + t^{r-1}) \cdot Y_t \right) + \Exp \left[ \sum_{t=1}^{\infty} t^{-p(1+\gamma)+r} Z_t \bigg| \Fa_{\infty} \right]
            \\
            &< \infty.
        \end{aligned}
    \end{equation*}

    Since $\CondExp{\lim_{t \uparrow \infty} t^r Y_t}{\Fa_s}$ converges as $s \uparrow \infty$, we know
    \begin{equation*}
        \liminf_{s \uparrow \infty} \CondExp{\lim_{t \uparrow \infty} t^r Y_t}{\Fa_s} = \lim_{s \uparrow \infty} \CondExp{\lim_{t \uparrow \infty} t^r Y_t}{\Fa_s}.
    \end{equation*}
    If we take $0 < s < T$ and $s \uparrow \infty$ so that $T \uparrow \infty$ and use DCT on $\lim_{T \uparrow \infty} \CondExp{T^r Y_T}{\Fa_s}$ and the right hand side of (\ref{eq:l:weak_RS_rate:2a}), we get
    \begin{equation*}
        \lim_{T \uparrow \infty} T^r \cdot Y_T = \liminf_{s \uparrow \infty} ~\Exp \left[ \lim_{T \uparrow \infty} T^r \cdot Y_T \bigg| \Fa_s \right] \leq \lim_{s \uparrow \infty} \sum_{t=s}^{\infty}  \left( (\alpha_t^{1+\gamma} \cdot t^r + t^{r-1}) \cdot Y_t + t^{-p(1+\gamma)+r} Z_t \right) = 0.
    \end{equation*}
    This completes the proof.
\end{proof}

\section{Gronwall-type Inequalities}

\begin{proposition}
    Let $\{X_n \in \R : n \in \N_0 \}$ and $\{ Y_n \in \R : n \in \N_0 \}$ be sequences, $n_0 \in \N_0$, and $\beta > 0$. The sequence
    \begin{equation*}
        Y_{n+n_0} = \beta^{2n} y + \Indicator{\N}(n) \sum_{k=0}^{n-1} \beta^{2k} X_{n_0 + n-1-k} ~~,~~ n \geq 0
    \end{equation*}
    is the unique solution to the difference equation
    \begin{equation}
        \begin{cases}
            Y_{n_0+n} = \beta^2 \, Y_{n_0+n-1} + X_{n_0+n-1} ~,~ n > 0.
            \\
            Y_{n_0} = y.
        \end{cases}
        \label{eq:p:ode:1}
    \end{equation}
    If $Z_n$ satisfies $Z_{n_0+n} \leq \beta^2 \, Z_{n_0+n-1} + X_{n_0+n-1}$ (resp. $\geq$) and $Z_{n_0} \leq y$ (resp. $\geq$), then $Z_{n_0+n} \leq Y_{n_0 + n}$ (resp. $\geq$).
    \label{p:ode}
\end{proposition}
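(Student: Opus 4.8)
The plan is to treat the recurrence (\ref{eq:p:ode:1}) as a first-order linear difference equation and proceed in three short steps: check that the displayed closed form solves it, observe that the solution is forced, and then establish the comparison principle by a one-line induction.

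First I would verify that the given formula satisfies (\ref{eq:p:ode:1}). Plugging $n = 0$ gives $Y_{n_0} = y$ because $\Indicator{\N}(0) = 0$, so the initial condition holds. For $n \geq 1$ the verification is just a reindexing:
\begin{align*}
    \beta^2 Y_{n_0+n-1} + X_{n_0+n-1}
    &= \beta^2 \left( \beta^{2(n-1)} y + \sum_{k=0}^{n-2} \beta^{2k} X_{n_0+n-2-k} \right) + X_{n_0+n-1}
    \\
    &= \beta^{2n} y + \sum_{k=0}^{n-2} \beta^{2(k+1)} X_{n_0+n-1-(k+1)} + \beta^{0} X_{n_0+n-1}
    \\
    &= \beta^{2n} y + \sum_{j=0}^{n-1} \beta^{2j} X_{n_0+n-1-j} = Y_{n_0+n},
\end{align*}
using the substitution $j = k+1$ and the empty-sum convention when $n = 1$ (and the fact that $\Indicator{\N}(n-1)$ times an empty sum is again an empty sum). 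Uniqueness is then immediate: (\ref{eq:p:ode:1}) fixes $Y_{n_0} = y$ and determines $Y_{n_0+n}$ from $Y_{n_0+n-1}$ and $X_{n_0+n-1}$, so a trivial induction on $n$ shows that any two solutions coincide for all $n \geq 0$.

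For the comparison statement I would induct on $n$. The base case $n = 0$ is the hypothesis $Z_{n_0} \leq y = Y_{n_0}$. If $Z_{n_0+n-1} \leq Y_{n_0+n-1}$, then multiplying by $\beta^2 > 0$ (which preserves the inequality) and adding $X_{n_0+n-1}$ gives $\beta^2 Z_{n_0+n-1} + X_{n_0+n-1} \leq \beta^2 Y_{n_0+n-1} + X_{n_0+n-1} = Y_{n_0+n}$, and chaining this with $Z_{n_0+n} \leq \beta^2 Z_{n_0+n-1} + X_{n_0+n-1}$ yields $Z_{n_0+n} \leq Y_{n_0+n}$. The ``$\geq$'' version is obtained verbatim by reversing every inequality, the positivity of $\beta^2$ again being exactly what makes the inductive step go through. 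There is no genuine obstacle here; the only place that needs a bit of care is matching the empty-sum and indicator conventions at $n = 0$ and $n = 1$ in the verification of the closed form.
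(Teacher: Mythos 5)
Your proposal is correct and follows essentially the same route as the paper: direct verification of the closed form by reindexing, uniqueness via the recursion determining each term (the paper phrases this as the difference of two solutions decaying like $\beta^{2n}$ from zero, which is the same induction), and the comparison principle by induction using $\beta^2 > 0$. The only cosmetic difference is that for the ``$\geq$'' case the paper negates $Z$, $X$, $y$ and reuses the ``$\leq$'' case, while you reverse the inequalities directly; these are equivalent.
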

\begin{proof}
    We have $Y_{n_0} = \beta^{0} y = y$ and $Y_{n_0 + 1} = \beta^2 y + X_{n_0} = \beta^2 Y_{n_0} + X_{n_0}$. Moreover, for $n > 0$,
    \begin{equation*}
        \begin{aligned}
            X_{n_0+n} + \beta^2 Y_{n_0+n} &= X_{n_0+n} + \beta^{2(n+1)} y + \sum_{k=0}^{n-1} \beta^{2k + 2} X_{n_0+n-1-k}
            \\
            &= X_{n_0+n} + \beta^{2(n+1)} y + \sum_{k=1}^{n} \beta^{2k} X_{n_0+n-k}
            \\
            &= Y_{n_0+n+1}.
        \end{aligned}
    \end{equation*}
    For uniqueness, if $Z_{n_0+n}$ satisfies (\ref{eq:p:ode:1}), then $Z_{n_0+n} - Y_{n_0+n} = \beta^2 (Z_{n_0+n-1} - Y_{n_0+n-1})$ for all $n > 0$ with $Z_{n_0} - Y_{n_0} = 0$. This implies $Z_{n_0+n} - Y_{n_0+n} = (Z_{n_0} - Y_{n_0}) \,\beta^{2n} = 0$ for all $n \geq 0$.

    For the second part, we show by induction that $Z_{n_0+n} \leq Y_{n_0+n}$. Note that $Z_{n_0} \leq y = Y_{n_0}$ and $Z_{n_0 + 1} \leq \beta^2 y + X_{n_0} = Y_{n_0+1}$. For the inductive step, assume $Z_{n_0 + n} \leq Y_{n_0 + n}$ for $n \leq N$. From the first claim, $Y_{n_0+n+1} = \beta^2 Y_{n_0 + n} + X_{n_0+n}$. Therefore, we have
    \begin{equation*}
        Z_{n_0+N+1} \leq \beta^2 \, Z_{n_0+N} + X_{n_0+N} \leq \beta^2 \, Y_{n_0+N} + X_{n_0+N} = Y_{n_0+N+1}.
    \end{equation*}
    On the other hand, if $Z_{n_0 + n} \geq \beta^2 Z_{n_0 + n-1} + X_{n_0 + n}$, then we may consider
    \begin{equation*}
        \begin{cases}
            -Z_{n_0 + n} \leq \beta^2 (-Z_{n_0 + n-1}) + (-X_{n_0 + n - 1})
            \\
            -Z_{n_0} \leq -y.
        \end{cases}
    \end{equation*}
    We can apply the previous claim and get $-Z_{n_0 + n} \leq -Y_{n_0 + n}$.
\end{proof}

\begin{proposition}
    Let $A \geq 0$, $X_n \geq 0$, $c_n \geq 0$.
    $$X_n \leq A + \sum_{k=1}^{n} c_{k-1} \,X_{k-1} ~~~,~~~ X_0 \leq A~~~ \implies X_n \leq A \prod_{k=1}^{n} \left( 1 + c_{k-1} \right).$$
    \label{p:gronwall}
\end{proposition}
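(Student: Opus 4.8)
The plan is to prove this by reducing the recursive bound to a product bound, using the auxiliary sequence of partial sums. First I would introduce $S_n := A + \sum_{k=1}^{n} c_{k-1} X_{k-1}$, so that the hypothesis reads $X_n \leq S_n$ for every $n \geq 0$; here the empty sum is $0$, so $S_0 = A$, and the assumption $X_0 \leq A$ is exactly $X_0 \leq S_0$. The point of passing to $S_n$ is that it satisfies a clean one-step recursion: since $S_n - S_{n-1} = c_{n-1} X_{n-1}$ and $X_{n-1} \leq S_{n-1}$ with $c_{n-1} \geq 0$, we obtain $S_n \leq (1 + c_{n-1}) S_{n-1}$ for all $n \geq 1$.

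Next I would iterate this one-step inequality. A trivial induction on $n$, with base case $S_0 = A$, gives $S_n \leq A \prod_{k=1}^{n}(1 + c_{k-1})$, where the empty product at $n = 0$ is interpreted as $1$. Combining with $X_n \leq S_n$ yields the claimed bound $X_n \leq A \prod_{k=1}^{n}(1 + c_{k-1})$.

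If one prefers to argue directly on $X_n$, the same conclusion follows by strong induction: assuming $X_j \leq A \prod_{k=1}^{j}(1+c_{k-1})$ for all $j \leq n-1$, substitute into the hypothesis to get $X_n \leq A\big(1 + \sum_{k=1}^{n} c_{k-1} \prod_{j=1}^{k-1}(1+c_{j-1})\big)$, and then invoke the telescoping identity $\prod_{k=1}^{n}(1+c_{k-1}) = 1 + \sum_{k=1}^{n} c_{k-1} \prod_{j=1}^{k-1}(1+c_{j-1})$, which itself follows by summing $\prod_{j=1}^{k}(1+c_{j-1}) - \prod_{j=1}^{k-1}(1+c_{j-1}) = c_{k-1}\prod_{j=1}^{k-1}(1+c_{j-1})$ from $k=1$ to $n$.

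There is essentially no obstacle here: the only points requiring care are the indexing conventions (empty sums and products equal $0$ and $1$ respectively, and the initial condition $X_0 \leq A$ is precisely what starts the recursion) and the use of $c_n \geq 0$ to preserve inequalities upon multiplication. The nonnegativity of $X_n$ is not actually needed for this direction of the estimate.
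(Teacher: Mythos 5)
Your proof is correct and follows essentially the same route as the paper: both introduce the partial-sum majorant $S_n$ (the paper's $Y_n$), derive the one-step bound $S_n \leq (1+c_{n-1})S_{n-1}$ from $X_{n-1}\leq S_{n-1}$, and iterate. If anything, your multiplicative formulation is marginally cleaner than the paper's ratio $Y_n/Y_{n-1}$, which implicitly assumes $Y_{n-1}>0$ (an issue only in the degenerate case $A=0$).
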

\begin{proof}
    Define the sequence
    \begin{equation*}
        Y_n = A + \sum_{k=1}^{n} c_{k-1} \,X_{k-1} ~~~,~~~ Y_0 = A
    \end{equation*}
    Observe that $Y_{n} - Y_{n-1} = c_{n-1} X_{n-1} \leq c_{n-1} Y_{n-1}$ for all $n \geq 1$, which implies
    \begin{equation*}
        \frac{Y_{n}}{Y_{n-1}} \leq 1 + c_{n-1} ~~,~~\forall n \geq 1.
    \end{equation*}
    The above is a telescoping product. Along with the assumption $X_n \leq Y_n$, the above argument yields the desired result.
\end{proof}
\section{Some Facts Used in Proofs}

\begin{proposition}
    If the first two properties in assumption \ref{a:loss} are both satisfied, then $F$ is convex and $(\gamma, L)$-smooth.
    \label{p:F_inherits_L}
\end{proposition}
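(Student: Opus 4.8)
The plan is to show that $F$ is differentiable with $\nabla F(w) = \E_\rho[\nabla\ell(Z,w)]$ by differentiating under the integral sign, then to deduce the $(\gamma,L)$-H\"older bound for $\nabla F$ by pulling the H\"older estimate through the expectation, and finally to read off convexity from convexity of $\ell(z,\cdot)$. For the first step, fix a base point $w_0 \in \Wb$. From the H\"older continuity of $\nabla\ell(z,\cdot)$ one gets, for every $w \in \Wb$, the pointwise bound $\|\nabla\ell(z,w)\| \leq \|\nabla\ell(z,w_0)\| + L\,\|w - w_0\|^{\gamma}$, which together with property (2) of assumption \ref{a:loss} shows $z \mapsto \nabla\ell(z,w)$ lies in $L^1(\Zb,\Ba,\rho\,dz)$ for every $w$, so $\E_\rho[\nabla\ell(Z,w)]$ is well defined. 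To differentiate $F$ at $w_0$ in a coordinate direction $e_i$, I would consider the difference quotient $g_h(z) = h^{-1}\bigl(\ell(z,w_0 + h e_i) - \ell(z,w_0)\bigr)$; by the mean value theorem applied to $t \mapsto \ell(z, w_0 + t e_i)$ there is $\theta = \theta(z,h) \in (0,1)$ with $g_h(z) = \partial_i \ell(z, w_0 + \theta h e_i)$, whence $|g_h(z)| \leq \|\nabla\ell(z,w_0)\| + L\,|h|^{\gamma} \leq \|\nabla\ell(z,w_0)\| + L$ for $|h| \leq 1$. This dominating function is $\rho$-integrable, and $g_h(z) \to \partial_i \ell(z,w_0)$ as $h \to 0$ by differentiability of $\ell(z,\cdot)$, so dominated convergence gives $\partial_i F(w_0) = \E_\rho[\partial_i \ell(Z,w_0)]$, i.e.\ $\nabla F(w_0) = \E_\rho[\nabla\ell(Z,w_0)]$.

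With that identity in hand, the $(\gamma,L)$-smoothness is immediate: for all $u,v \in \Wb$, the triangle inequality for the vector-valued integral together with property (1) gives
\[
    \|\nabla F(u) - \nabla F(v)\| = \bigl\|\E_\rho[\nabla\ell(Z,u) - \nabla\ell(Z,v)]\bigr\| \leq \E_\rho\|\nabla\ell(Z,u) - \nabla\ell(Z,v)\| \leq L\,\|u - v\|^{\gamma},
\]
so $\nabla F$ is $(\gamma,L)$-H\"older; in particular $\nabla F$ is continuous, hence $F$ is $C^1$ and genuinely differentiable. Convexity is even easier: since $\ell(z,\cdot)$ is convex for every $z$, for $\lambda \in [0,1]$ and $u,v \in \Wb$ one has $\ell(z, \lambda u + (1-\lambda) v) \leq \lambda\,\ell(z,u) + (1-\lambda)\,\ell(z,v)$ pointwise, and integrating against $\rho$ preserves the inequality, yielding $F(\lambda u + (1-\lambda) v) \leq \lambda F(u) + (1-\lambda) F(v)$.

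I expect the main obstacle to be the rigorous justification of differentiating under the integral sign in the first step, namely producing one $\rho$-integrable dominating function valid uniformly over all small $h$. This is exactly the place where the two hypotheses are used in tandem: the H\"older bound in (1) converts the difference quotient into a quantity controlled by $\|\nabla\ell(\cdot,w_0)\|$, and property (2) makes that controlling function integrable. Everything else — the H\"older inequality through the integral and the convexity argument — is routine.
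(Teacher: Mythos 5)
Your proof is correct and follows essentially the same route as the paper: convexity by integrating the pointwise convexity inequality, and the $(\gamma,L)$-H\"older bound for $\nabla F$ by passing the norm inside the expectation via Jensen's inequality. The only difference is that you explicitly justify the interchange $\nabla \E_{\rho}[\ell(Z,w)] = \E_{\rho}[\nabla\ell(Z,w)]$ by a mean-value-theorem plus dominated-convergence argument, whereas the paper takes this identity for granted (it is asserted around (\ref{eq:grad_F_estimator})); this extra step is sound and makes the argument more complete.
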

\begin{proof}
    By the convexity of $\ell(z, \cdot)$, we get $\ell(z, a \,w_1 + (1-a) w_2) \leq a\,\ell(z, w_1) + (1-a) \ell(z, w_2)$. Applying the expectation w.r.t $\rho$ proves that $F$ is convex. Now, observe that we have
    \begin{align*}
        \|\nabla F(w_1) - \nabla F(w_2) \| &= \|\Exp_{\rho} [\nabla \ell(Z, w_1) - \nabla \ell(Z, w_2)] \|
        \\
        &\leq \Exp_{\rho} \|\nabla \ell(Z, w_1) - \nabla \ell(Z, w_2)\|
        \\
        &\leq L \,\|w_1 - w_2\|^{\gamma}.
    \end{align*}
    The proof is complete.
\end{proof}

\begin{proposition}
    If $f : \R^{d} \rightarrow \R$ is $(\gamma, L)$-smooth, then (\ref{eq:gamma_smooth}) holds.
    \label{p:gamma_smooth}
\end{proposition}
\begin{proof}
    Let $\phi(t) = f(x + t(y - x))$. Note that $\phi$ is continuously differentiable. Using the fundamental theorem of calculus on $\phi$ yields
    \begin{align*}
        f(y) &= f(x) + \int_{0}^{1} \InnerProd{\nabla f(x + t(y-x)) - \nabla f(x)}{y-x} \,dt\\
        &\leq f(x) + \int_{0}^{1} \|\nabla f(x + t(y-x)) - \nabla f(x)\| \,\|y - x\| \,dt\\
        &\leq f(x) + L \,\|y - x\|^{1+\gamma} \int_{0}^{1} t^{\gamma} \,dt.
    \end{align*}
    Integrating the right hand side yields the desired result.
\end{proof}

\begin{lemma}
    (\cite[lemma 1]{orabona2020}) Let $b_t$, $\eta_t$ be two non-negative sequences and $a_t$ be a sequence of vectors in a vector space $X$. Let $p \geq 1$ and assume $\sum_{t=1}^{\infty} \eta_t b_t^p < \infty$ and $\sum_{t=1}^{\infty} \eta_t = \infty$. Assume also that there exists $L > 0$ such that both of the following conditions hold
    \begin{enumerate}
        \item $|b_{t+\tau} - b_t| \leq L \left( \sum_{i=t}^{t+\tau} \eta_i b_i + \left\| \sum_{i=t}^{t+\tau-1} \eta_i a_i \right\| \right)$.
        \item $\left\| \sum_{i=1}^{\infty} \eta_i a_i \right\| < \infty$.
    \end{enumerate}
    Then $b_t \rightarrow 0$.
    \label{l:orabona}
\end{lemma}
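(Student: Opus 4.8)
The plan is to derive $b_t \to 0$ purely from the two summability hypotheses by an ``excursion'' argument that avoids any martingale machinery, in the spirit of Orabona.

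The first thing I would record is that $\liminf_t b_t = 0$: otherwise $b_t \ge c > 0$ for all large $t$, hence $b_t^p \ge c^p$ eventually, and since $\sum_t \eta_t = \infty$ this makes $\sum_t \eta_t b_t^p = \infty$, contradicting the hypothesis (here I use $p < \infty$ and that $x \mapsto x^p$ is nondecreasing on $[0,\infty)$). Then I would argue by contradiction: assume $b_t \not\to 0$ and fix $\epsilon \in (0,1)$ with $b_t \ge 2\epsilon$ infinitely often. Combining this with $\liminf b_t = 0$, I can extract a sequence of disjoint ``crossings'': indices $\sigma_k < \mu_k$ with $\sigma_k \to \infty$, $b_{\sigma_k} \ge 2\epsilon$, $b_{\mu_k} < \epsilon$, and $b_i \ge \epsilon$ for every $\sigma_k \le i < \mu_k$ (pick $\sigma_k$ a ``$\ge 2\epsilon$'' time beyond all earlier data, let $\mu_k$ be the first later ``$< \epsilon$'' time, then repeat).

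Next I would apply hypothesis (1) on the interval $[\sigma_k, \mu_k]$, i.e. with $\tau = \mu_k - \sigma_k$:
\[
\epsilon \;\le\; b_{\sigma_k} - b_{\mu_k} \;\le\; L\Big(\sum_{i=\sigma_k}^{\mu_k}\eta_i b_i \;+\; \Big\|\sum_{i=\sigma_k}^{\mu_k-1}\eta_i a_i\Big\|\Big).
\]
Since $\sum_i \eta_i a_i$ converges, the Cauchy criterion bounds the vector term by $\big\|\sum_{i\ge\sigma_k}\eta_i a_i\big\| + \big\|\sum_{i\ge\mu_k}\eta_i a_i\big\| \to 0$, so for all large $k$ one gets $\sum_{i=\sigma_k}^{\mu_k}\eta_i b_i \ge \epsilon/(2L)$. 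I would then split this as $\sum_{i=\sigma_k}^{\mu_k-1}\eta_i b_i + \eta_{\mu_k} b_{\mu_k}$, the first sum running over indices where $b_i \ge \epsilon$. If $\eta_{\mu_k} b_{\mu_k} < \epsilon/(4L)$ for infinitely many $k$, then along those $k$ we have $\sum_{i=\sigma_k}^{\mu_k-1}\eta_i b_i \ge \epsilon/(4L)$, and since $b_i \ge \epsilon$ together with $p \ge 1$ gives $b_i^p \ge \epsilon^{p-1} b_i$ there, this produces $\sum_{i=\sigma_k}^{\mu_k-1}\eta_i b_i^p \ge \epsilon^p/(4L)$ on a disjoint family of blocks, contradicting $\sum_i \eta_i b_i^p < \infty$. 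So for all large $k$ we must have $\eta_{\mu_k} b_{\mu_k} \ge \epsilon/(4L)$ with $b_{\mu_k} < \epsilon$.

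The hard part is disposing of this last possibility, namely controlling the ``boundary term'' $\eta_{\mu_k} b_{\mu_k}$. When $p = 1$ it is immediate: $\eta_{\mu_k} b_{\mu_k}^p = \eta_{\mu_k} b_{\mu_k} \ge \epsilon/(4L) > 0$ for infinitely many $k$ contradicts $\eta_t b_t^p \to 0$ (the general term of the convergent series $\sum_t \eta_t b_t^p$). For general $p \ge 1$ one notices that the inequality forces $b_{\mu_k} \to 0$ and $\eta_{\mu_k} \to \infty$, and then reruns the same dichotomy on the next (upward) crossing, starting from the last time before it at which $b < \epsilon$, to conclude that infinitely often an index block with $b_i \in [\epsilon, 2\epsilon]$ must again absorb $\Omega(1)$ worth of $\sum_i \eta_i b_i^p$, again impossible. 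I expect this iterated boundary estimate to be the only genuinely technical point; the remainder is elementary, and it can be carried out as in \cite[lemma 1]{orabona2020}.
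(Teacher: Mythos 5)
Your preliminary steps are sound: $\liminf_t b_t=0$ follows as you say, the extraction of disjoint downward crossings is fine, the Cauchy-tail estimate for the $a$-series is fine, and your treatment of the boundary term in the case $p=1$ (via $\eta_t b_t\to 0$) is complete. But the ``hard part'' you defer is not a technicality that an iterated crossing argument can absorb. With the $b$-sum in condition (1) running over $i=t,\dots,t+\tau$ (including the right endpoint), as in the statement you are proving, the conclusion is in fact false for $p>1$, so no elaboration of your dichotomy can close the gap. Take $a_t\equiv 0$, $p=2$, $\delta_k=2^{-k}$, and let the sequence consist of the consecutive triples $(b,\eta)=(\delta_k,\delta_k^{-1}),\,(1,\delta_k),\,(\delta_k,\delta_k^{-1})$ for $k=1,2,\dots$. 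Then $\sum_t\eta_t b_t^2=3\sum_k\delta_k<\infty$ and $\sum_t\eta_t=\infty$; moreover any pair of indices with $|b_t-b_s|>0$ has at least one endpoint of ``launcher/lander'' type, whose single term already contributes $\eta_i b_i=1\ge|b_t-b_s|$ to the inclusive sum, so condition (1) holds with $L=1$ and condition (2) is trivial; yet $b_t=1$ infinitely often. This is exactly the scenario your proposal could not rule out: every crossing, in either direction, is ``paid for'' by a boundary index with tiny $b$ and huge $\eta$, at negligible cost to $\sum_t\eta_t b_t^p$, so the proposed rerun of the dichotomy on the next upward crossing meets the same obstruction.

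The resolution is the endpoint convention: the lemma must be read with the $b$-sum stopping at $i=t+\tau-1$, i.e.\ over the same range as the $a$-sum (this is what the smoothness/increment bound actually delivers in the application, since $w_{t+\tau}-w_t$ involves only $\alpha_i$, $i\le t+\tau-1$; alternatively one can keep the inclusive sum but add a hypothesis such as $\eta_t\to 0$, which also holds there because $\eta_t=\alpha_t$). Under either reading your own crossing argument finishes immediately and no boundary estimate is needed: on the downward crossing every index appearing in the $b$-sum satisfies $b_i\ge\epsilon$, hence $\eta_i b_i\le\epsilon^{1-p}\eta_i b_i^p$, and each of the infinitely many disjoint blocks contributes at least $\epsilon^{p}/(2L)$ to $\sum_t\eta_t b_t^p$, a contradiction. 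That completed argument is essentially the proof of \cite[lemma 1]{orabona2020}; note the present paper quotes the lemma without reproducing a proof, so the comparison here is with the cited source rather than with an argument in the text, and in the only place the lemma is invoked (corollary \ref{cor:grad_F}) the stronger, endpoint-excluded bound is available, so the corrected form suffices for the paper's purposes.
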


\begin{lemma}
    (\cite[lemma 13]{lei2018}). If $f : \R^{d} \rightarrow \R$ is $(\gamma, L)$-smooth and convex, then
    \begin{equation*}
        \frac{ \gamma \,\|\nabla f(y) - \nabla f(x)\|^{\frac{1+\gamma}{\gamma}} }{(1 + \gamma) L^{1/\gamma}} \leq f(y) - f(x) - \InnerProd{\nabla f(x)}{y-x} \leq \frac{L \,\|y-x\|^{1+\gamma}}{1 + \gamma}.
    \end{equation*}
    \label{l:lsg_13}
\end{lemma}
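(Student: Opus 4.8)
The plan is to prove the two inequalities separately. The right-hand inequality is nothing more than (\ref{eq:gamma_smooth}) from proposition \ref{p:gamma_smooth}: writing that inequality as $f(y) \le f(x) + \InnerProd{\nabla f(x)}{y-x} + \frac{L}{1+\gamma}\|y-x\|^{1+\gamma}$ and rearranging gives the claimed upper bound immediately, without even using convexity.

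For the left-hand (Bregman-divergence) inequality, the idea is to recenter $f$ at $x$. First I would fix $x$ and introduce $g(z) := f(z) - \InnerProd{\nabla f(x)}{z-x}$. Then $g$ is convex, it is again $(\gamma,L)$-smooth because $\nabla g(z) = \nabla f(z) - \nabla f(x)$ has the same H\"older modulus as $\nabla f$, and $\nabla g(x) = 0$; since $g$ is convex and differentiable, this critical point is a global minimizer, so $\inf_z g(z) = g(x)$. The quantity to be bounded below equals $g(y) - g(x) = g(y) - \inf_z g(z)$, so it suffices to establish
\[
\inf_z g(z) \;\le\; g(y) - \frac{\gamma}{(1+\gamma)\,L^{1/\gamma}}\,\|\nabla g(y)\|^{\frac{1+\gamma}{\gamma}}.
\]

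To get this I would apply (\ref{eq:gamma_smooth}) to $g$ with base point $y$ and test point $z = y - t\,\nabla g(y)/\|\nabla g(y)\|$ for $t > 0$ (if $\nabla g(y) = 0$ the target inequality is trivially $0 \le g(y) - \inf_z g(z)$, which holds by convexity). This yields $g(z) \le g(y) - t\,\|\nabla g(y)\| + \frac{L}{1+\gamma}\, t^{1+\gamma}$; minimizing the right-hand side over $t \ge 0$, with optimal $t_\star = (\|\nabla g(y)\|/L)^{1/\gamma}$, produces exactly the displayed bound after collecting exponents (the key simplification being $1 - \tfrac{1+\gamma}{\gamma} = -\tfrac1\gamma$). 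Substituting $\nabla g(y) = \nabla f(y) - \nabla f(x)$ and $g(y) - g(x) = f(y) - f(x) - \InnerProd{\nabla f(x)}{y-x}$ finishes the argument. The hard part here is not conceptual but purely bookkeeping: correctly carrying the fractional exponents through the one-variable minimization over $t$, plus recalling the elementary fact that a critical point of a convex differentiable function is a global minimum. Everything else is a direct application of the descent inequality (\ref{eq:gamma_smooth}).
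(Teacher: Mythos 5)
Your proof is correct: the upper bound is indeed just a rearrangement of the descent inequality (\ref{eq:gamma_smooth}), and your recentering argument ($g(z)=f(z)-\InnerProd{\nabla f(x)}{z-x}$, which is convex, $(\gamma,L)$-smooth, minimized at $x$, then minimizing the descent bound along $-\nabla g(y)$ with $t_\star=(\|\nabla g(y)\|/L)^{1/\gamma}$) is exactly the standard proof of this self-bounding inequality. The paper itself does not reprove the lemma but simply cites Lei, Shi, and Guo, where the argument is essentially the one you give, so there is nothing to add.
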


\begin{lemma}
    (\cite[lemma 14]{lei2018}). If assumption \ref{a:loss} holds and $\ell(z, \cdot)$ is convex for all $z \in \Zb$, then for all $\theta \in (0, 1]$,
    \begin{equation*}
        \Exp_{\rho}[ \|\nabla \ell(Z, w) \|^{1+\theta} ] \leq 2^{\theta} L^{1/\gamma} (1 + \theta) \,[F(w) - F_*] + \frac{2^{\theta}(1 - \gamma \theta)}{1 + \gamma} + 2^{\theta}\,\Exp_{\rho}[\|\nabla \ell(Z, w_*)\|^{1 + \theta}].
    \end{equation*}
    \label{l:lsg_14}
\end{lemma}


\begin{lemma}
    Assume assumption \ref{a:loss} holds and $\ell(z, \cdot)$ is convex for all $z \in \Zb$.
    \begin{enumerate}
        \item There exists $a_0, a_1 \in \R_+$ such that $\|\nabla F(w)\|^2 \leq a_1 \,F(w) + a_0$ where $a_1$ is in (\ref{i:l:lsg_sb_ell:nabla_ell_ell}).\label{i:l:lsg_sb_ell:nabla_F}
        \item $F(w_t) \leq F_* + \frac{L}{1+\gamma} \|w_t - w_*\|^{1+\gamma}$.\label{i:l:lsg_sb_ell:F_wt}
        \item If in addition assumption \ref{a:ell_cond} holds, then there exists $a_1, a_2, a_4, a_5, a_6, a_7 \in \R_+$ such that
        \begin{enumerate}
            \item $\|\nabla\ell(z, w)\|^2 \leq a_1 \, \ell(z, w) + a_2$.\label{i:l:lsg_sb_ell:nabla_ell_ell}
            \item $\|\nabla F(w) - \nabla \ell(z, w)\|^2 \leq 6 L^2 \, \|w - w_*\|^{2\gamma} + a_4$.\label{i:l:lsg_sb_ell:dmt}
            \item $\|\nabla \ell(z, w)\|^2 \leq 2 L^2 \, \|w - w_*\|^{2\gamma} + a_5$.\label{i:l:lsg_sb_ell:nabla_ell_w}
            \item $\CondExp{\|\nabla F(w_t) - \nabla \ell(Z, w_t)\|^2}{\Fa_t} \leq a_6\,(F(w_t) - F_*) + a_7$.\label{i:l:lsg_sb_ell:dmt_F}
        \end{enumerate}
    \end{enumerate}
    \label{l:lsg_sb_ell}
\end{lemma}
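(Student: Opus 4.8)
The plan is to isolate the single self-bounding estimate in part (3a), prove it directly from the descent inequality of proposition \ref{p:gamma_smooth}, and obtain every other part as a short consequence of it, of the $(\gamma,L)$-smoothness of $F$ (proposition \ref{p:F_inherits_L}), and of the fact that $\nabla F(w_*) = 0$ (first-order optimality, since $F$ is differentiable and $w_*$ is a global minimizer, which exists by assumption \ref{a:loss}). For part (3a), fix $z$ and set $g = \ell(z,\cdot) \ge 0$, which is $(\gamma,L)$-smooth; evaluating proposition \ref{p:gamma_smooth} at $y = w - t\,\nabla g(w)$ for $t > 0$ and using $g(y) \ge 0$ gives
\[
0 \;\le\; g(w) - t\,\|\nabla g(w)\|^{2} + \frac{L}{1+\gamma}\,t^{1+\gamma}\,\|\nabla g(w)\|^{1+\gamma}.
\]
When $\nabla g(w) \ne 0$, choosing $t$ so that $\frac{L}{1+\gamma}\,t^{\gamma}\,\|\nabla g(w)\|^{\gamma-1} = \frac12$ turns the last term into $\frac12\,t\,\|\nabla g(w)\|^{2}$ and yields $\|\nabla g(w)\|^{(1+\gamma)/\gamma} \le C_0(L,\gamma)\,g(w)$; the case $\nabla g(w)=0$ is trivial. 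Raising this to the power $\frac{2\gamma}{1+\gamma} \in (0,1]$ and using $s^{2\gamma/(1+\gamma)} \le 1+s$ for $s \ge 0$ gives $\|\nabla\ell(z,w)\|^{2} \le a_1\,\ell(z,w) + a_2$ with $a_1, a_2$ depending only on $L,\gamma$ (note this uses only assumption \ref{a:loss}). Evaluating at $w = w_*$ and invoking assumption \ref{a:ell_cond} then produces a finite $M = M(L,\gamma)$ with $\sup_{z\in\Zb}\|\nabla\ell(z,w_*)\|^{2} \le M$ — the uniform gradient bound at the minimizer on which the remaining parts rely.

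Part (2) is the right half of lemma \ref{l:lsg_13} (equivalently proposition \ref{p:gamma_smooth}) applied to $F$ at $x = w_*$, $y = w_t$, using $\nabla F(w_*) = 0$. Part (1) follows by averaging (3a) over $\rho$ and applying Jensen's inequality, $\|\nabla F(w)\|^{2} = \|\E_\rho \nabla\ell(Z,w)\|^{2} \le \E_\rho\|\nabla\ell(Z,w)\|^{2} \le a_1 F(w) + a_2$, so $a_0 = a_2$ with the same $a_1$ as in (3a) (alternatively apply the left half of lemma \ref{l:lsg_13} to $F$ and use $F_* \ge 0$). For parts (3b) and (3c), decompose, using $\nabla F(w_*) = 0$,
\[
\nabla F(w) - \nabla\ell(z,w) = \bigl(\nabla F(w) - \nabla F(w_*)\bigr) - \bigl(\nabla\ell(z,w) - \nabla\ell(z,w_*)\bigr) - \nabla\ell(z,w_*),
\]
bound the first two brackets by $L\|w-w_*\|^{\gamma}$ (Hölder continuity of $\nabla F$ from proposition \ref{p:F_inherits_L} and of $\nabla\ell(z,\cdot)$ from assumption \ref{a:loss}) and the last term by $\sqrt{M}$, and apply $\|a+b+c\|^{2} \le 3(\|a\|^{2}+\|b\|^{2}+\|c\|^{2})$ to get (3b) with $a_4 = 3M$; the analogous two-term split $\nabla\ell(z,w) = (\nabla\ell(z,w)-\nabla\ell(z,w_*)) + \nabla\ell(z,w_*)$ with $\|a+b\|^{2}\le 2(\|a\|^{2}+\|b\|^{2})$ gives (3c) with $a_5 = 2M$. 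For part (3d), since $Z_t$ is independent of $\Fa_t$ and $w_t \in m\Fa_t$, the product-measure computation behind (\ref{eq:grad_F_estimator}) reduces the conditional expectation to $\E_\rho\|\nabla F(w) - \nabla\ell(Z,w)\|^{2}\big|_{w=w_t}$; unbiasedness ($\E_\rho\nabla\ell(Z,w) = \nabla F(w)$) gives $\E_\rho\|\nabla F(w)-\nabla\ell(Z,w)\|^{2} = \E_\rho\|\nabla\ell(Z,w)\|^{2} - \|\nabla F(w)\|^{2} \le a_1 F(w) + a_2$ by (3a), and writing $F(w) = (F(w)-F_*) + F_*$ yields (3d) with $a_6 = a_1$, $a_7 = a_1 F_* + a_2$ (one could instead quote lemma \ref{l:lsg_14} with $\theta = 1$ together with $\sup_z\|\nabla\ell(z,w_*)\|^{2} \le M$).

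The only step carrying genuine content is (3a), the self-bounding property for $\gamma$-Hölder gradients: for $\gamma = 1$ it is the textbook $\|\nabla g\|^{2} \le 2Lg$, but for $\gamma < 1$ one must optimize over the auxiliary displacement $t$ and then convert a bound on $\|\nabla g\|^{(1+\gamma)/\gamma}$ into one on $\|\nabla g\|^{2}$ through the exponent inequality $s^{2\gamma/(1+\gamma)} \le 1+s$. Everything afterward is routine bookkeeping with smoothness and Hölder estimates anchored at the minimizer $w_*$ and the elementary inequality $\|\sum_{i=1}^{n} a_i\|^{2} \le n\sum_{i=1}^{n}\|a_i\|^{2}$.
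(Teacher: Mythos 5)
Your proposal is correct, and on the one item with real content, part (3a), it takes a genuinely different route from the paper. The paper invokes lemma \ref{l:lsg_13} (hence convexity) together with Young's inequality with exponents $\tfrac{1+\gamma}{1-\gamma}$, $\tfrac{1+\gamma}{2\gamma}$, writes the intermediate bound in terms of $\ell(z,w)-\ell(z,w_*)$, and absorbs $\ell(z,w_*)$ via assumption \ref{a:ell_cond}; you instead obtain the self-bounding estimate $\|\nabla\ell(z,w)\|^{(1+\gamma)/\gamma}\lesssim \ell(z,w)$ directly from proposition \ref{p:gamma_smooth} by inserting $y=w-t\,\nabla\ell(z,w)$, using only $\ell\ge 0$, optimizing over $t$, and then converting exponents via $s^{2\gamma/(1+\gamma)}\le 1+s$. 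This buys two things: your (3a) needs neither convexity nor assumption \ref{a:ell_cond} (constants depend only on $L,\gamma$), and it sidesteps the delicate first inequality in the paper's display (\ref{eq:l:lsg_sb_ell:1}), which treats $w_*$ as though it minimized $\ell(z,\cdot)$ rather than $F$; the paper's route, in exchange, reuses the quoted Lei--Shi--Guo machinery verbatim. The rest of your argument runs parallel to the paper's: (2) is the same descent bound anchored at $w_*$; (1) by averaging (3a) with Jensen matches the paper's ``rerun the argument for $F$'' and keeps the same $a_1$, as the statement requires; (3b)--(3c) are the same H\"older decompositions at $w_*$ with the gradient bound at $w_*$ coming from (3a) plus assumption \ref{a:ell_cond}; and in (3d) your bias--variance identity under $\CondExp{\cdot}{\Fa_t}$ is a slightly sharper alternative to the paper's $2\|\nabla F\|^2+2\|\nabla\ell\|^2$ estimate combined with parts (1) and (3a). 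The only caveats, shared equally by the paper, are that $\nabla F(w_*)=0$ is used (in (2), (3b), and implicitly in the paper's own proof), which presumes $w_*$ is an interior minimizer, and that evaluating the descent inequality at the auxiliary point $w-t\,\nabla\ell(z,w)$ tacitly assumes this point lies in $\Wb$; both are at the same level of rigor as the paper's own applications of proposition \ref{p:gamma_smooth} and lemma \ref{l:lsg_13}.
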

\begin{proof}
    \textbf{Proof of (1).} We use the same argument as (\ref{eq:l:lsg_sb_ell:1}), but replace $\ell(z, w_*)$ with $F_*$.

    \textbf{Proof of (2).} Use $F(w_t) = F(w_t) - F_* + F_*$ and apply lemma \ref{l:lsg_13}.

    \textbf{Proof of (3.a).} Applying lemma \ref{l:lsg_13} and subsequently Young's inequality \cite[eq 4.3]{lei2018} with $p = (1+\gamma)/(1-\gamma)$ and $q = (1+\gamma)/(2\gamma)$ for $\gamma \in (0, 1]$ yields
    \begin{equation}
        \begin{aligned}
            \|\nabla \ell(z, w)\|^{2} &\leq \left( \frac{1+\gamma}{\gamma} \right)^{\frac{2\gamma}{1+\gamma}} L^{\frac{2}{1+\gamma}} \left( \ell(z, w) - \ell(z, w_*) \right)^{\frac{2\gamma}{1+\gamma}}
            \\
            &\leq \frac{1-\gamma}{1+\gamma}\left( \frac{1+\gamma}{\gamma} \cdot L^{1/\gamma} \right)^{\frac{2\gamma}{1-\gamma}} + \frac{2\gamma}{1+\gamma} \left( \ell(z, w) - \ell(z, w_*) \right)
            \\
            &\leq \frac{2\gamma}{1+\gamma} \, \ell(z, w) + \frac{1-\gamma}{1+\gamma}\left( \frac{1+\gamma}{\gamma} \cdot L^{1/\gamma} \right)^{\frac{2\gamma}{1-\gamma}} + \frac{2\gamma}{1+\gamma} \cdot \sup_{z \in \Zb}  \ell(z, w_*).
        \end{aligned}
        \label{eq:l:lsg_sb_ell:1}
    \end{equation}


    \textbf{Proof of (3.b).} Observe that if assumption \ref{a:ell_cond} and a minimizer $w_* \in \Wb$ exists and $\ell(z, \cdot)$ is convex in the second slot, then applying (\ref{i:l:lsg_sb_ell:nabla_ell_ell}) yields
    \begin{equation*}
        \begin{aligned}
            \|\nabla F(w) - \nabla \ell(z, w)\|^2 &\leq 2 \, \|\nabla F(w)\|^2 + 2 \, \|\nabla \ell(z, w)\|^2
            \\
            &\leq
            2 L^2 \, \|w_t - w_*\|^{2\gamma} + 4 \,  \|\nabla \ell(z, w_*)\|^2 + 4 \|\nabla \ell(z, w) - \nabla \ell(z, w_*)\|^2
            \\
            &\leq 6 L^2 \|w_t - w_*\|^{2\gamma} + 4 \, a_1 \, \sup_{z\in\Zb} \ell(z, w_*) + 4 \, a_2.
        \end{aligned}
    \end{equation*}

    \textbf{Proof of (3.c).} By assumption \ref{a:ell_cond} and claim (\ref{i:l:lsg_sb_ell:nabla_F}), we get
    \begin{equation*}
        \begin{aligned}
            \|\nabla \ell(z, w)\|^2 &\leq 2 \|\nabla \ell(z, w_*)\|^2 + 2 \|\nabla \ell(z, w) - \nabla\ell(z, w_*)\|^2
            \\
            &\leq \left( 2 a_1 \sup_{z\in\Zb}\ell(z, w_*) + 2 a_2 \right) + 2L^2 \|w - w_*\|^{2\gamma}
        \end{aligned}
    \end{equation*}

    \textbf{Proof of (3.d).} Another way of estimating $\|\nabla F(w) - \nabla \ell(z, w)\|^2$ is by using (\ref{i:l:lsg_sb_ell:nabla_F})
    \begin{equation*}
        \begin{aligned}
            \|\nabla F(w) - \nabla \ell(z, w)\|^2 &\leq 2 \|\nabla F(w_t)\|^2 + 2\|\nabla \ell(Z_t, w_t)\|^2
            \\
            &\leq 2 a_1 \, (F(w) - F_*) + 2 a_1 F_* + 2 a_0 + 2 a_1 \, \ell(z, w) + 2 a_2.
        \end{aligned}
    \end{equation*}
    Applying $\CondExp{\cdot}{\Fa_t}$ yields
    \begin{equation*}
        \begin{aligned}
            \CondExp{\|\delta m_t\|^2}{\Fa_t} &\leq 2 a_1 \, (F(w_t) - F_*) + 2 a_1 \, F(w_t) + 2(a_0 + a_2)
            \\
            &= 4 a_1 \, (F(w_t) - F_*) + 2 (a_0 + a_2 + a_1 \, F_*).
        \end{aligned}
    \end{equation*}
    This completes the proof.
\end{proof}

\begin{proposition}
    Let $\delta m_t$ be as in (\ref{eq:sgd2}) corresponding to the SHB iterates $w_t$ in (\ref{eq:shb}).
    \begin{enumerate}
        \item $\CondExp{\delta m_t}{\Fa_{s}} = \mathbf{0}$ and $\Exp[\delta m_s^{(i)} \,\delta m_t^{(j)}] = 0$ for all $s < t$ and $i \neq j$.
        \item $\CondExp{ \InnerProd{\nabla F(w_t)}{\delta m_t} }{ \Fa_{t} } = 0$ for all $t > 0$.
        \item If $\alpha_t$ is a sequence in $\R$, then $\Exp \left\|\sum_{s=t}^{T} \alpha_s \, \delta m_s \right\|^2 = \sum_{s=t}^{T} \alpha_s^2 \, \Exp\|\delta m_s\|^2$ for all $t \leq T$.
    \end{enumerate}
    \label{p:mt}
\end{proposition}
\begin{proof}
    By (\ref{eq:grad_F_estimator}), we have $\CondExp{\delta m_t}{\Fa_{t}} = \nabla F(w_t) - \CondExp{\nabla \ell(Z, w_t)}{\Fa_{t}} = 0$. Furthermore,
    \begin{equation*}
        \begin{aligned}
            \CondExp{\delta m_t}{\Fa_{t-1}} &=  \CondExp{\nabla F(w_t)}{\Fa_{t-1}} - \CondExp{\nabla \Exp_{\rho}[\ell(Z, w_t)]}{\Fa_{t-1}} = 0.
            \\
            \Exp[\delta m_s^{(i)} \,\delta m_t^{(j)}] &= \Exp[\delta m_s^{(i)} \,\Exp[\delta m_t^{(j)} \,|\, \Fa_{t-1} \,|\, \Fa_s]] = 0 ~~,~~ s < t.
        \end{aligned}
    \end{equation*}
    Since the standard inner product on $\R^{d}$ is a finite sum and the conditional expectation is linear, applying the first claim yields
    \begin{equation*}
        \CondExp{\InnerProd{\nabla F(w_t)}{\delta m_t}}{\Fa_{t}} = \sum_{i=1}^{d} \CondExp{\nabla F(w_t)^{(i)} \delta m_t^{(i)} }{\Fa_{t}} = \InnerProd{\nabla F(w_t)}{\CondExp{\delta m_t}{\Fa_{t}}} = 0.
    \end{equation*}
    For the last claim, we have by the first claim,
    \begin{equation*}
        \begin{aligned}
            \Exp\left\|\sum_{s=t}^{T} \alpha_s \, \delta m_t\right\|^2 &= \sum_{r,s=t}^{T} \alpha_r \cdot \alpha_s \cdot \Exp\InnerProd{ \delta m_r }{ \delta m_s }
            \\
            &= \left( \sum_{r=s=t}^{T} \alpha_s^2 \, \Exp\|\delta m_s\|^2 \right) + 2 \sum_{r < s = t}^{T} \alpha_r \cdot \alpha_s \cdot \Exp\InnerProd{ \delta m_r }{ \delta m_s }
            \\
            &= \sum_{r=s=t}^{T} \alpha_s^2 \, \Exp\|\delta m_s\|^2.
        \end{aligned}
    \end{equation*}
    This completes the proof.
\end{proof}


\bibliographystyle{plain}
\bibliography{sgd_bib}

\end{document}